\numberwithin{equation}{section}
\renewcommand{\arraystretch}{1.4}
\DeclareMathOperator{\dist}{dist}
\DeclareMathOperator{\supp}{supp}
\newcommand{\1}{{\mathds 1}}
\newcommand{\ms}{\medskip}
\newcommand{\bp}{\noindent {\em Proof: }}
\newcommand{\ep}{\hfill $\square$ \medskip}
\theoremstyle{plain}
\newtheorem{theorem}{Theorem}[section]
\newtheorem{lemma}[theorem]{Lemma}
\newtheorem{corollary}[theorem]{Corollary}
\newtheorem{proposition}[theorem]{Proposition}
\theoremstyle{definition}
\theoremstyle{remark}
\newtheorem{remark}[theorem]{Remark}
\begin{document}

\title[The reverse Riesz transforms is unbounded in spaces with slow diffusion]{In spaces with a slow diffusion, the Riesz transform is unbounded on $L^p$, $p\in (2,\infty)$.}

\author[Feneuil]{Joseph Feneuil}
\address{Joseph Feneuil. Universit\'e Paris-Saclay, Laboratoire de math\'ematiques d’Orsay, 91405, Orsay, France}
\email{joseph.feneuil@universite-paris-saclay.fr}

\begin{abstract} 
In graphs and Riemannian manifolds where the kernel of the diffusion semigroup satisfies pointwise sub-Gaussian estimates, we study the range of parameters \( p \in (1, \infty) \) and \( \gamma \in [0, 1] \) for which the quantities \( \|\Delta^\gamma f\|_p \) and \( \|\nabla f\|_p \) can be compared. In particular, we prove that in such metric spaces, the Riesz transform \( \nabla \Delta^{-1/2} \) is unbounded on \( L^p \) for all \( p \in (2, \infty) \), thereby demonstrating a clear departure from the behavior observed in the Euclidean setting.
\end{abstract}

\maketitle

\ms\noindent{\bf Keywords:} Riesz transform, reverse Riesz transform, quasi-Riesz transform,  sub-Gaussian heat kernel estimates, graphs, Riemannian manifolds.

\ms\noindent
AMS classification: 42B20, 43A85, 60J10, 60J60.

\tableofcontents

\section{Introduction}
In \( \mathbb{R}^n \), the Fourier transform \( \mathcal{F} \) is used to define the Riesz transform as
\[
\mathcal{R} f := \mathcal{F}^{-1} \left[ \frac{i\xi}{|\xi|} \mathcal{F}f \right].
\]
Parseval's identity immediately implies that the Riesz transform \( \mathcal{R} \) is an isometry on \( L^2(\mathbb{R}^n) \). For smooth functions, \( \mathcal{R} \) can also be expressed as the singular integral
\[
\mathcal{R} f(x) := \text{p.v.} \int_{\mathbb{R}^n} \frac{x - y}{|x - y|^{n+1}} f(y) \, dy.
\]
Using Calderón–Zygmund theory, combined with duality arguments, one obtains (see \cite[Chapter 2, Theorem 1]{Stein70}) that
\begin{equation} \label{Ep} \tag{E$_p$}
C_p \|f\|_p \leq \|\mathcal{R} f\|_p \leq C_p \|f\|_p \qquad \text{for } p \in (1, \infty),
\end{equation}
and this method can be easily adapted to establish the same inequality on Lie groups.

If we define \( (-\Delta)^{-1/2} \) on a dense subdomain of \( L^2(\mathbb{R}^n) \), such as \( C^\infty_0(\mathbb{R}^n) \), via spectral theory, we observe from the Fourier transform definition that
\[
\mathcal{R} f = \nabla (-\Delta)^{-1/2} f \qquad \text{for } f \in C^\infty_0(\mathbb{R}^n).
\]
This identity is particularly useful, as it allows us to define the Riesz transform in a more general setting. Indeed, suppose we are given a metric measure space \( (\Gamma, d, m) \), equipped with a non-negative self-adjoint operator \( \Delta \) on \( L^2(\Gamma, m) \) with dense domain. We can define a Dirichlet form
\[
\mathcal{E}(f, g) := -\int_\Gamma f \Delta g \, dm,
\]
and a ``gradient length"
\[
\nabla f := \mathcal{E}(f, f).
\]
Two primary examples arise naturally: 
\begin{itemize}
    \item If \( \Gamma \) is a Riemannian manifold, then \( \Delta \) is the (positive) Laplace–Beltrami operator and \( \nabla f \) corresponds to the Riemannian gradient.
    \item If \( \Gamma \) is a graph equipped with a Markov operator \( P \) describing a random walk, then \( \Delta := I - P \), and \( \nabla f(x) \) measures the norm of the difference between \( f(x) \) and the values of \( f \) at its neighboring points.
\end{itemize}
The spaces we consider in this work - namely graphs and Riemannian manifolds, representing discrete and continuous settings respectively - are formally introduced in Section~\ref{Sspaces}.

\medskip

In our context, the identity \( \|\nabla f\|_2 = \|\Delta^{1/2} f\|_2 \) holds automatically by construction. We are interested in the existence of a constant \( C = C(p, \Gamma) \) such that the following inequalities hold:
\begin{equation} \label{Rp} \tag{R$_{p,\frac{1}{2}}$}
\|\nabla f\|_{L^p} \leq C \|\Delta^{1/2} f\|_{L^p},
\end{equation}
or
\begin{equation} \label{RRp} \tag{RR$_{p,\frac{1}{2}}$}
\|\Delta^{1/2} f\|_{L^p} \leq C \|\nabla f\|_{L^p},
\end{equation}
for a given \( p \in (1, \infty) \), where \( f \) is chosen from an appropriate dense subdomain. These questions have attracted significant attention since the 1980s, following a question posed by Strichartz in \cite{Str83}, asking for which Riemannian manifolds and which values of \( p \) the inequalities \eqref{Rp} and \eqref{RRp} hold. We briefly list some notable related results:

\begin{enumerate}[(i)]
    \item Inequality \eqref{Rp} holds for \( 1 \leq p \leq 2 \) on doubling Riemannian manifolds that satisfy pointwise Gaussian heat kernel bounds (see \cite{CD99}), and on graphs where the associated Markov process satisfies analogous bounds (see \cite{Russ00}).
    
    \item The result in (i) extends to sub-Gaussian heat kernel bounds on Riemannian manifolds and graphs; see \cite{CCFR15}.
    
    \item Inequality \eqref{Rp} holds for \( 2 < p \leq 2 + \varepsilon \) on doubling Riemannian manifolds (see \cite{ACDH04, AC05}) and on graphs (see \cite{BR09}) that satisfy an \( L^2 \) Poincaré inequality on balls. See also \cite{RD22} for results linking the reverse Riesz transform \eqref{RRp} and Poincaré inequalities.
    
    \item The assumptions in \cite{ACDH04, BR09} are weakened in \cite{BF15}.
    
    \item The Riesz transform is bounded in certain Hardy spaces \( H^1 \) (i.e., the endpoint case \( p = 1 \)); see \cite{Russ01, AMR08, HLMMY11, BD14, Fen15}.
\end{enumerate}

\medskip

In \cite{CCFR15}, we provided the complete range of validity for the estimates \eqref{Rp} and \eqref{RRp} on Vicsek graphs, Vicsek cable systems, and Vicsek manifolds. An example of a Vicsek graph is shown in Figure~\ref{fig1}. Vicsek cable systems and Vicsek manifold are metric spaces and manifolds, respectively, that satisfy the same property as the Vicsek graphs at non-local scales.

\begin{figure}[!ht] 
\centering
\caption{Vicsek graph (4th step of the construction) with volume growth \( V(x,r) \approx r^{\log_3 5} \) and escape rate \( r^{\log_3 5 + 1} \).}
\label{fig1}
\includegraphics[width=0.33\textwidth]{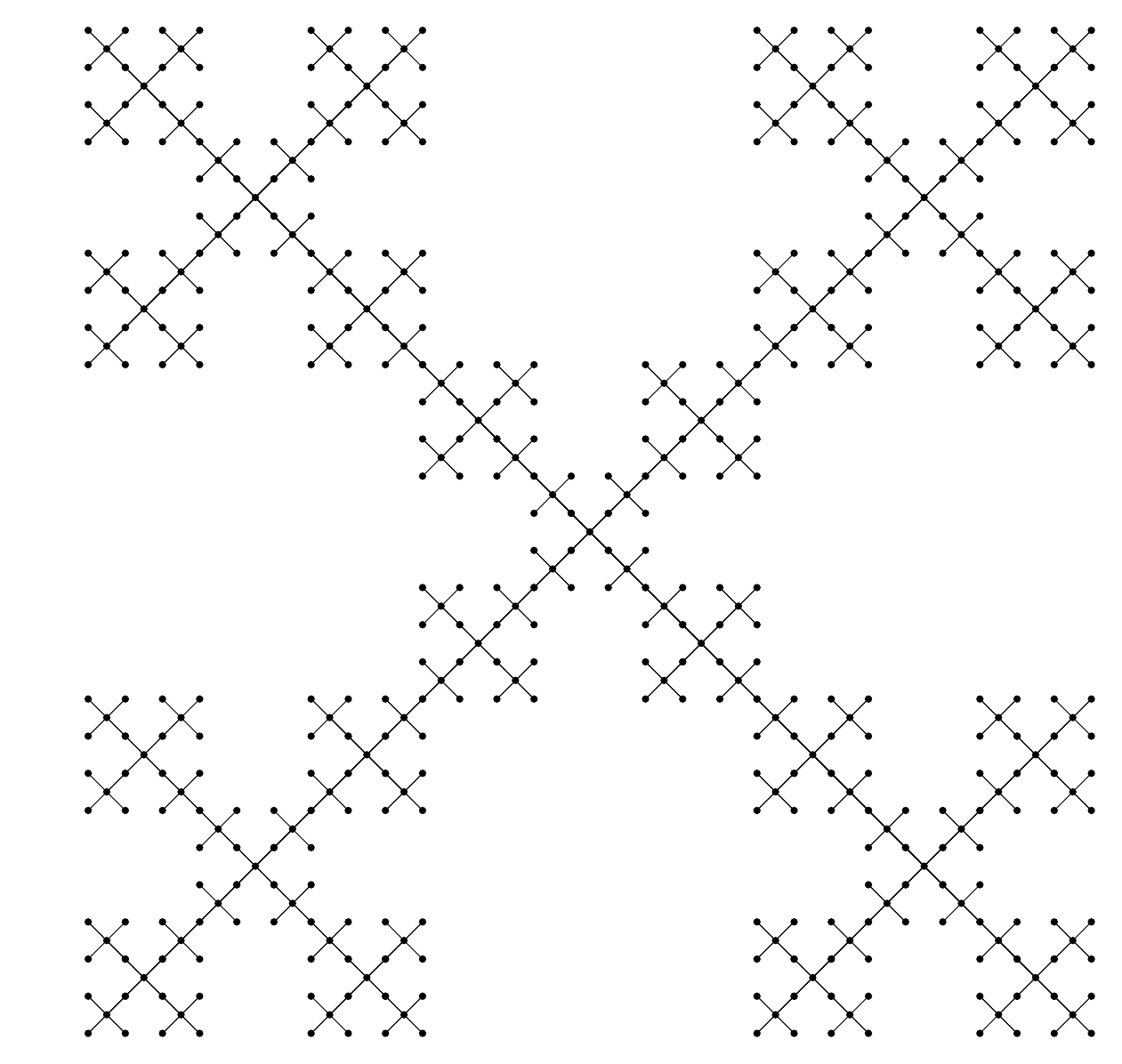}
\end{figure}

\begin{theorem}[{\cite[Theorem 5.3]{CCFR15}}] \label{ThCCFR}
For Vicsek graphs and Vicsek manifolds, the bound \eqref{Rp} holds if and only if \( p \in (1,2] \), and  \eqref{RRp} holds if and only if \( p \in [2,\infty) \).
\end{theorem}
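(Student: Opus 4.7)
The plan is to split Theorem~\ref{ThCCFR} into its four constituent claims---the positivity of $(R_{p,1/2})$ on $(1,2]$ and of $(RR_{p,1/2})$ on $[2,\infty)$, together with their failures in the complementary ranges---and handle them by combining general sub-Gaussian theory, a duality argument, and one explicit test-function construction exploiting the cut-point geometry of Vicsek.

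\emph{Positive direction.} Vicsek graphs and Vicsek manifolds are by construction the archetypal doubling spaces satisfying sub-Gaussian heat kernel bounds with walk dimension $\beta=\log_3 5+1>2$, so item~(ii) in the introduction yields $(R_{p,1/2})$ for $p\in(1,2]$ directly. For $(RR_{p,1/2})$ with $p\in[2,\infty)$ I would use the $L^2$ identity $\Delta^{1/2}f=(\nabla\Delta^{-1/2})^*\nabla f$, which gives
\begin{equation*}
\|\Delta^{1/2}f\|_p\leq \|(\nabla\Delta^{-1/2})^*\|_{p\to p}\|\nabla f\|_p=\|\nabla\Delta^{-1/2}\|_{p'\to p'}\|\nabla f\|_p,
\end{equation*}
so that the bound for $p'\in(1,2]$ propagates to $(RR_{p,1/2})$ for $p\in[2,\infty)$. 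The same duality shows, by contraposition, that failure of $(RR_{q,1/2})$ for some $q\in(1,2)$ implies failure of $(R_{q',1/2})$ for its conjugate $q'\in(2,\infty)$. It therefore suffices to construct, for each $q\in(1,2)$, a sequence of test functions violating $(RR_{q,1/2})$.

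\emph{Negative construction.} I would exploit the cut-point structure of Vicsek: at scale $n$, there is a central cut vertex $x_n$ whose removal disconnects the $n$-th iterate into four ``fingers'' $F_n^\pm$, each a rescaled copy of the previous iterate. Take $f_n$ to be a smoothed step equal to $1$ on the portion of $F_n^+$ inside $B(x_n,r_n)$ (with $r_n=3^n$) and to $0$ elsewhere. Since on Vicsek every boundary crossing occurs at a single edge, $\|\nabla f_n\|_q\approx 1$ for all $q$. To bound $\|\Delta^{1/2}f_n\|_q$ from below I would use the subordination formula
\begin{equation*}
\Delta^{1/2}f_n=c\int_0^\infty e^{-t\Delta}(\Delta f_n)\,\frac{dt}{\sqrt{t}},
\end{equation*}
together with sub-Gaussian heat kernel estimates; the cut-point structure forces the random walk to cross $x_n$, so that $e^{-t\Delta}(\Delta f_n)(x)$ retains a definite sign along each finger and gives a pointwise bound $|\Delta^{1/2}f_n(x)|\gtrsim d(x,x_n)^{-\alpha}$ on a substantial portion of $F_n^\pm$. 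Integrating against the Vicsek volume then forces $\|\Delta^{1/2}f_n\|_q\to\infty$ as $n\to\infty$ precisely in the range $q<2$, the sharpness of the threshold being dictated by the spectral dimension $d_s=2D/\beta$ of the fractal.

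\emph{Main obstacle.} The delicate step is the matching pointwise lower bound on $\Delta^{1/2}f_n$. Upper bounds follow from sub-Gaussian heat kernel estimates by routine subordination, but the lower bound requires ruling out sign cancellations in the integral representation. Here the cut-point geometry is essential and cannot be replaced by doubling plus heat kernel bounds alone: the fact that any path between distinct fingers is forced through $x_n$ is what produces a definite sign in $e^{-t\Delta}(\Delta f_n)$ on each finger, and the escape-rate exponent $\beta=D+1$ is what makes the threshold exactly $q=2$. Quantifying this probabilistic picture into a sharp inequality on the Vicsek fractal is the technical heart of the argument.
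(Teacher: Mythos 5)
First, a point of comparison: the paper does not actually prove this statement --- it is quoted verbatim from \cite[Theorem 5.3]{CCFR15} and the citation is the proof. (The negative half is, however, reproved in this paper by a different route: items \textit{iii.} and \textit{iv.} of Theorem~\ref{MainTh} with $\beta=\log_35+1>2$ give the failure of $(RR_{p,1/2})$ for $p<2$ and of $(R_{p,1/2})$ for $p>2$ using only doubling and \eqref{UE}, via Proposition~\ref{prUE=>aFK}, Lemma~\ref{lemPp=2} and interpolation with $(P_{\infty,1})$.) Your positive half and both duality reductions are correct and standard: $(R_{p,1/2})$ for $p\in(1,2]$ is the main theorem of \cite{CCFR15}, $(RR_{p,1/2})$ for $p\in[2,\infty)$ follows from the adjoint identity, and the whole statement reduces to disproving $(RR_{q,1/2})$ for every $q\in(1,2)$.

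The gap is in that last reduction, which you flag as the technical heart but do not carry out, and the mechanism you describe is quantitatively inconsistent. If $|\Delta^{1/2}f_n(x)|\gtrsim d(x,x_n)^{-\alpha}$ on a fixed proportion of each annulus $\{d(x,x_n)\approx r\}$, $1\le r\le 3^n$, then $\|\Delta^{1/2}f_n\|_q^q\gtrsim\sum_{r\le 3^n}r^{D-1-\alpha q}$, which diverges as $n\to\infty$ if and only if $q\le D/\alpha$; forcing divergence for \emph{all} $q<2$ requires $\alpha\le D/2$, but then the same sum diverges (at least logarithmically) at $q=2$, contradicting $\|\Delta^{1/2}f_n\|_2=\|\nabla f_n\|_2\approx 1$. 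So a single power-law profile cannot produce a threshold at exactly $q=2$: you would need either a family of test functions with exponents $\alpha\downarrow D/2$, or a profile that is essentially flat of height $V(x_n,3^n)^{-1/2}$ spread over volume $V(x_n,3^n)$, which is a different shape from what your sign heuristic suggests. Moreover, the ``definite sign'' claim carries all the weight and is unproven: $\Delta f_n$ is a mean-zero dipole at the cut vertex, so lower-bounding $\Delta^{-1/2}(\Delta f_n)$ requires lower bounds on \emph{differences} of subordinated heat kernels, which do not follow from the upper bounds \eqref{UE} (or from the cut-point structure without substantial extra work). A route that avoids pointwise lower bounds on $\Delta^{1/2}f$ altogether is the one this paper takes: show $(RR_{q,1/2})\Rightarrow(P_{q,\beta/2})$, interpolate with $(P_{\infty,1})$ to get $(P_{2,\delta})$ for some $\delta<\beta/2$, and contradict Lemma~\ref{lemPp=2}, whose test functions are truncated heat kernels rather than indicators of fingers.
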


Vicsek graphs are in a sense extreme: they exhibit the slowest possible diffusion, or equivalently, the minimal number of paths for a given volume growth rate. This raises a natural question: is the mutual exclusivity of the Riesz and reverse Riesz transform estimates - i.e., the fact that one inequality fails when the other holds, and vice versa, except at \( p=2 \) - a phenomenon specific to this extreme case, or does it reflect a more general property of graphs lacking pointwise Gaussian upper bounds (also referred to as "fractal-type" graphs)?

The purpose of this article is to establish the latter. We show that if the Markov or heat kernel satisfies a pointwise sub-Gaussian estimate, then the Riesz transform is unbounded for all \( p \in (2,\infty) \). More generally, we argue that the Riesz transform is not the appropriate tool to study in the sub-Gaussian setting. Instead of comparing \( \nabla f \) with \( \Delta^{1/2} f \) in \( L^p \), it is more meaningful to compare \( \nabla f \) with \( \Delta^\gamma f \) in \( L^p \) for an appropriate choice of $\gamma$ that depends on $p$.

When the ambient space \( \Gamma \) is a graph, we consider, for $f\in L^p(\Gamma)$ and $C>0$ independent of $f$, the inequalities
\begin{equation} \label{Rpgg} \tag{R$_{p,\gamma}$}
\|\nabla f\|_{L^p} \leq C \|\Delta^{\gamma} f\|_{L^p}
\end{equation}
and
\begin{equation} \label{RRpgg} \tag{RR$_{p,\gamma}$}
\|\Delta^{\gamma} f\|_{L^p} \leq C \|\nabla f\|_{L^p}.
\end{equation}
When the ambient space is a Riemannian manifold, we instead consider versions with an extra smoothing, we want the existence of $C>0$ such that for any $f\in C^\infty_0(\Gamma)$, we have either
\begin{equation} \label{Rpgm} \tag{R$_{p,\gamma}$}
\|\nabla e^{-\Delta} f\|_{L^p} \leq C \|\Delta^{\gamma} f\|_{L^p}
\end{equation}
or
\begin{equation} \label{RRpgm} \tag{RR$_{p,\gamma}$}
\|\Delta^{\gamma} e^{-\Delta} f\|_{L^p} \leq C \|\nabla f\|_{L^p}.
\end{equation}

Why this difference? In the discrete setting, both \( \Delta \) and \( \nabla \) are bounded operators that satisfy
\[
\|\Delta f\|_p \leq C \|\nabla f\|_p \leq C' \|f\|_p \qquad \text{for } f \in L^p(\Gamma),
\]
(see Lemma~\ref{lemRp1}), making it possible for inequalities like \eqref{Rpgg} and \eqref{RRpgg} to hold even when \( \gamma \neq 1/2 \). In contrast, smooth Riemannian manifolds are locally flat (i.e., locally Euclidean), so \( \|\nabla f\|_p \) is naturally compared only to \( \|\Delta^{1/2} f\|_p \). To focus on large-scale properties of the manifold and suppress small-scale variations, we regularize \( f \) using the heat semigroup.

\medskip

A partial result for Vicsek cable systems was established by Devyver and Russ:

\begin{theorem}[{\cite[Theorem 1.8]{DR24}}] \label{ThDR}
Let \( \Gamma \) be a Vicsek cable system with escape rate \( r^\beta \), which is a particular case of metric space satisfying the pointwise estimate \eqref{UEm} and Ahlfors regular condition \( C^{-1} r^{\beta-1} \leq V(x,r) \leq C r^{\beta - 1} \) when \(x\in \Gamma\) and \( r \geq 1 \). Then:
\begin{enumerate}[(i)]
\item The inequality \eqref{RRpgm} holds when \( p \in [2,\infty) \) and \( \gamma \geq \frac{1}{2} \), and also when \( p \in (1,2) \) and \( \gamma > \frac{1}{\beta} + \frac{1}{p} \left(1 - \frac{2}{\beta} \right) \);
\item The inequality \eqref{RRpgm} fails when \( p \in (1,\infty) \) and \( \gamma < \frac{1}{\beta} + \frac{1}{p} \left(1 - \frac{2}{\beta} \right) \).
\end{enumerate}
By duality, it follows that \eqref{Rpgm} fails when \( p \in (1,\infty) \) and \( \gamma > \frac{1}{\beta} + \frac{1}{p} \left(1 - \frac{2}{\beta} \right) \).
\end{theorem}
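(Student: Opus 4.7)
The plan is to address the three pieces of this theorem separately---the ``easy'' range of part~(i) ($p\geq 2$, $\gamma\geq \tfrac12$), the ``hard'' range of part~(i) ($p<2$, $\gamma>\gamma_c$), and the failure statement (ii)---and to extract the dual failure of \eqref{Rpgm} at the very end. Throughout write $\gamma_c=\gamma_c(p):=\tfrac{1}{\beta}+\tfrac{1}{p}(1-\tfrac{2}{\beta})$; a direct check gives the identity $\gamma_c(p)+\gamma_c(p')=1$ (with $p'=p/(p-1)$), which will drive the duality step.

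For $p\geq 2$ and $\gamma\geq \tfrac12$ the strategy is to reduce to the base case $\gamma=\tfrac12$: I would first upgrade Theorem~\ref{ThCCFR} from Vicsek graphs to Vicsek cable systems by the standard discrete-to-continuous comparison (each graph edge is replaced by a bounded interval, and the induced heat semigroup, Dirichlet form, and gradient transfer with bounded constants), giving \eqref{RRpgm} at $\gamma=\tfrac12$ on the cable system. The factorisation $\Delta^\gamma e^{-\Delta}=(\Delta^{\gamma-1/2}e^{-\Delta})\Delta^{1/2}$, combined with $L^p$-boundedness of the spectral multiplier $\lambda\mapsto\lambda^{\gamma-1/2}e^{-\lambda}$ (available from \eqref{UEm} via Davies-Gaffney bounds and a H\"ormander-type functional calculus), then closes this case. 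For $p<2$ and $\gamma>\gamma_c$ no such reduction is available, so I would instead use complex interpolation of the analytic family $T_\gamma:=\Delta^\gamma e^{-\Delta}\circ(\nabla)^{-1}$ between the trivial $L^2$ endpoint $T_{1/2}:L^2\to L^2$ (which is the Dirichlet form identity) and an $H^1$ endpoint $T_{1-1/\beta}:H^1\to H^1$, to be proved by kernel bounds of $\Delta^{1-1/\beta}e^{-\Delta}\circ(\nabla)^{-1}$ on $H^1$-atoms, using \eqref{UEm} and its gradient version. A direct computation shows that the interpolated exponent equals $\gamma_c(p)$ when $1/p=(1-\theta)/2+\theta$.

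For (ii) I would construct, at each large scale $R$, a test function $f_R$ supported on a long unbranched ``1D peninsula'' of the cable system of length $\sim R$, with $f_R$ a smooth bump of width $R$. Then $\|\nabla f_R\|_p\sim R^{1/p-1}$, and via the subordination representation
\[
\Delta^\gamma e^{-\Delta}f_R=\frac{\gamma}{\Gamma(1-\gamma)}\int_0^\infty\bigl(e^{-\Delta}-e^{-(1+t)\Delta}\bigr)f_R\,t^{-\gamma-1}\,dt
\]
I split $\|\Delta^\gamma e^{-\Delta}f_R\|_p$ into an on-peninsula contribution---where $\Delta$ acts essentially as the 1D Laplacian and produces $R^{1/p-2\gamma}$---and an off-peninsula contribution from sub-Gaussian diffusion, controlled by a kernel estimate of the form $|K_\gamma(x,y)|\lesssim d(x,y)^{-(\beta-1)-\beta\gamma}$ and integrated against the Ahlfors-type volume element $d^{\beta-2}\,dd$. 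Depending on $p$, one term dominates: the off-peninsula term governs for $p<2$, yielding the sharp threshold $\gamma_c(p)>\tfrac12$, whereas the on-peninsula term governs for $p\geq 2$. In either case one verifies $\|\Delta^\gamma e^{-\Delta}f_R\|_p/\|\nabla f_R\|_p\to\infty$ when $\gamma<\gamma_c$. Finally, the Riemannian identity $\Delta=-\div\nabla$ (extended to the cable system) gives, by integration by parts against test gradients, the equivalence of \eqref{Rpgm} at $(p,\gamma)$ with \eqref{RRpgm} at $(p',1-\gamma)$; combined with $\gamma_c(p)+\gamma_c(p')=1$, the failure of \eqref{RRpgm} for $1-\gamma<\gamma_c(p')$ translates directly into the stated failure of \eqref{Rpgm}.

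The main technical hurdles will be (a) proving the $H^1$ endpoint of the interpolation argument---which needs the correct Hardy space adapted to $\Delta$ on the cable system together with delicate kernel estimates on $\Delta^{1-1/\beta}e^{-\Delta}\circ(\nabla)^{-1}$---and (b) the off-peninsula $L^p$ bookkeeping for the test function in the regime $p<2$, where it is the fractal spreading of $e^{-s\Delta}f_R$ that produces the sharper threshold $\gamma_c(p)>\tfrac12$ not captured by the naive on-peninsula calculation.
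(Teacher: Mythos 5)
First, a point of order: Theorem~\ref{ThDR} is not proved in this paper at all --- it is quoted verbatim from \cite[Theorem 1.8]{DR24} as background, so there is no internal proof to compare your attempt against. Judged on its own merits, your sketch has the right global architecture (easy range by factoring through $\gamma=\tfrac12$; hard range by interpolation down to an endpoint at $p=1$, $\gamma=1-\tfrac1\beta$, which is indeed where the line $\gamma_c(p)=\tfrac1\beta+\tfrac1p(1-\tfrac2\beta)$ terminates; failure by explicit test functions; the final assertion by the duality of Theorem~\ref{ThDuality} together with $\gamma_c(p)+\gamma_c(p')=1$). The duality step and the $p\geq 2$, $\gamma\geq\tfrac12$ step are essentially correct, the latter modulo the (nontrivial but standard) transfer of Theorem~\ref{ThCCFR} from Vicsek graphs to cable systems.

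However, two of your steps contain genuine gaps rather than deferred technicalities. (a) The operator $T_\gamma=\Delta^\gamma e^{-\Delta}\circ(\nabla)^{-1}$ is not well defined: $\nabla f$ is a gradient (or a gradient length), not an invertible map on functions, and reverse Riesz inequalities cannot be phrased as boundedness of a single linear operator to be complex-interpolated; one must either dualize or run the interpolation at the level of the inequality itself, and the entire content of the hard range is then the $H^1$ (or $L^1$-type) endpoint, which you assert rather than prove --- in \cite{DR24} this is exactly where the specific self-similar structure of the Vicsek set (harmonic functions that are piecewise affine along the diagonal skeleton) is used, and no soft kernel estimate is known to substitute for it. (b) Your counterexample for (ii) is built on a ``long unbranched 1D peninsula of length $\sim R$,'' which does not exist in a Vicsek cable system: branch points occur at unit distance along every diagonal, so every unbranched segment has bounded length. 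Moreover, your on-peninsula computation would yield failure of \eqref{RRpgm} for all $\gamma<\tfrac12$ when $p>2$, which is strictly stronger than (ii) (where $\gamma_c(p)<\tfrac12$ for $p>2$) and would resolve the region that is explicitly left open in Figure~\ref{figRpV}; this is a strong signal that the heuristic is not sound. The actual obstruction comes from functions adapted to the fractal geometry (variants of the distance function, as in the Poincar\'e-type arguments of Lemma~\ref{lemPp=2} and Corollary~\ref{Corg<L(b,p)} of this paper), not from one-dimensional bumps.
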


A visualization of the validity region for \eqref{RRpgm} in Theorem~\ref{ThDR} is shown in Figure~\ref{figRpV}.

\begin{figure}[!ht] 
\centering
\caption{Theorem~\ref{ThDR}: Validity region for \eqref{RRpgm} on Vicsek cable systems. \vspace{-.3cm} \\ 
White indicates where the property holds, dark indicates where the property fails, and light gray indicates that we do not know.}
\label{figRpV}
\includegraphics[width=0.4\textwidth]{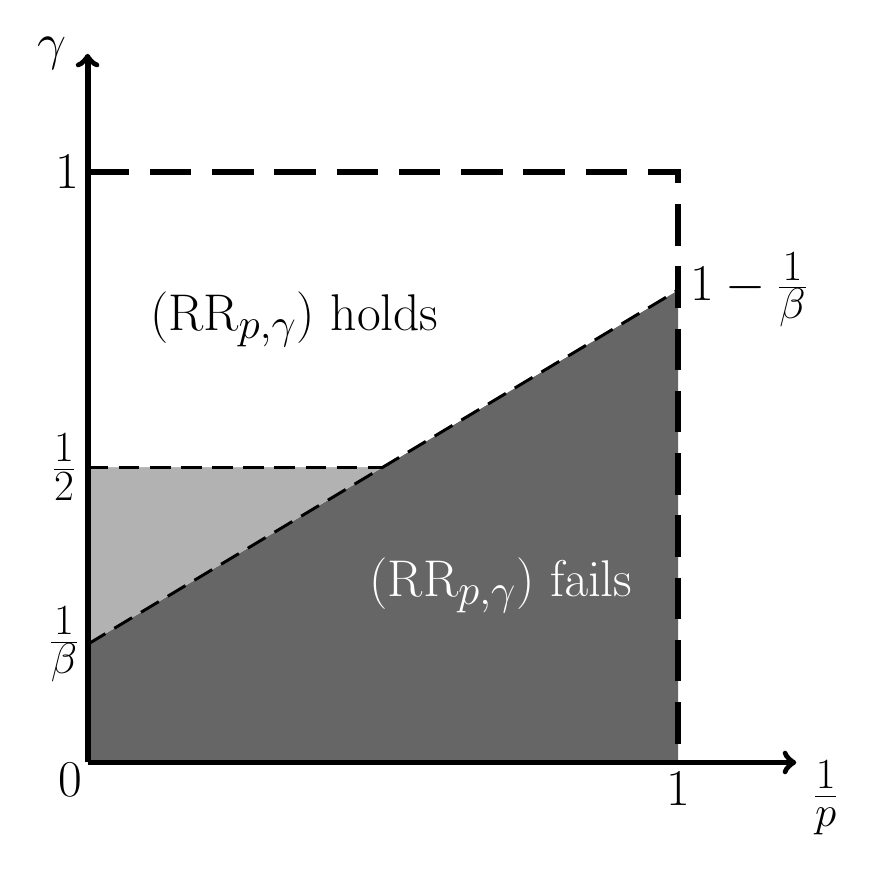}
\end{figure}

\medskip

One particularly interesting direction is to complete the picture of where estimates \eqref{Rpgm} and \eqref{RRpgm} hold for Vicsek cable systems. Equally compelling is the question of what happens for other graphs and Riemannian manifolds exhibiting fractal-type structures. In particular, as already suggested in the statement of Theorem~\ref{ThDR}, the escape rate should play a crucial role in determining the range of \( p \in (1, \infty) \) and \( \gamma \in [0,1] \) for which the estimates \eqref{Rpgm} and \eqref{RRpgm} are valid.

\medskip

Our main result\footnote{All definitions - especially that of graphs - are provided in Section~\ref{Sspaces}.} in the case where the space \( \Gamma \) is a graph is the following:

\begin{theorem} \label{MainTh}
Let \( (\Gamma,\mu) \) be a graph whose Markov semigroup is analytic and has non-degenerate transition probabilities. Assume that \( \Gamma \) satisfies \eqref{D} and \eqref{UE} for some \( \beta \geq 2 \). Then the following holds:
\begin{enumerate}[i.]
    \item \eqref{Rpgg} holds when \( p \in (1,\infty) \) and \( \gamma \leq \min\left\{\frac{1}{2}, \frac{1}{p} \right\} \),
    \item \eqref{RRpgg} holds when \( p \in (1,\infty) \) and \( \gamma \geq \max\left\{\frac{1}{2}, \frac{1}{p} \right\} \),
    \item \eqref{RRpgg} fails when \( p \in (1,\infty) \) and \( \gamma < \min\left\{ \max\left\{ \frac{1}{p}, \frac{1}{\beta} \right\}, \frac{1}{\beta} + \frac{1}{p} \left( 1 - \frac{2}{\beta} \right) \right\} \),
    \item \eqref{Rpgg} fails when \( p \in (1,\infty) \) and \( \gamma > \max\left\{ \min\left\{ \frac{1}{p}, 1 - \frac{1}{\beta} \right\}, \frac{1}{\beta} + \frac{1}{p} \left( 1 - \frac{2}{\beta} \right) \right\} \).
\end{enumerate}
In particular, when \( \beta > 2 \), estimate \eqref{Rp} holds if and only if for \( p \in (1,2] \), and estimate \eqref{RRp} holds if and only if \( p \in [2,\infty) \).
\end{theorem}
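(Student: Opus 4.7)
The plan is to prove parts (i)--(iv) of Theorem~\ref{MainTh} and then derive the ``in particular" statement by intersecting each validity or failure region with the line $\gamma = 1/2$. A uniform reduction is available via factorization: since $\Delta$ is bounded on $L^p(\Gamma)$ for every $p \in (1,\infty)$ in the discrete setting (Lemma~\ref{lemRp1}), spectral calculus yields $\|\Delta^s f\|_p \lesssim \|f\|_p$ for every $s \ge 0$, so the identities
\[
\nabla\Delta^{-\gamma} = (\nabla\Delta^{-1/2})\,\Delta^{1/2 - \gamma}, \qquad \Delta^\gamma = \Delta^{\gamma - 1/2}\,\Delta^{1/2}
\]
reduce \eqref{Rpgg} with $\gamma \le 1/2$ to the classical Riesz case \eqref{Rp}, and \eqref{RRpgg} with $\gamma \ge 1/2$ to the classical reverse Riesz \eqref{RRp}.

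For parts (i) and (ii), the classical inequalities \eqref{Rp} on $p \in (1,2]$ and \eqref{RRp} on $p \in [2,\infty)$ hold in our setting by \cite{CCFR15}, so the reduction above covers the sub-regions $p \le 2$ of (i) and $p \ge 2$ of (ii). The remaining critical case is \eqref{Rpgg} at $\gamma = 1/p$ for $p > 2$; the analogous endpoint $\gamma = 1/p$ for $p < 2$ in (ii) will then be obtained by duality, using that $(R_{p,\gamma}) \Rightarrow (RR_{p', 1-\gamma})$ with $p' = p/(p-1)$, which comes from linearizing the gradient to $\widetilde\nabla f(x,y) := f(x) - f(y)$ and invoking the identity $\widetilde\nabla^*\widetilde\nabla = \Delta$. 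I would establish \eqref{Rpgg} at $\gamma = 1/p$, $p > 2$, by Stein complex interpolation of the analytic family $T_z := \nabla\Delta^{-z}$, between the $L^2$ endpoint $z = 1/2$ where $\|T_{1/2}f\|_2 = \|f\|_2$ by construction, and the $L^\infty$ endpoint $z = 0$ where $T_0 = \nabla$ satisfies the trivial pointwise bound $\|\nabla f\|_\infty \le 2\|f\|_\infty$; the interpolation line $p = 1/\Re z$ then gives $T_{1/p}$ bounded on $L^p$ for every $p \in (2,\infty)$. The admissible growth of $T_{it}$ required by Stein's theorem follows from standard spectral-multiplier estimates for imaginary powers of $\Delta$ in the sub-Gaussian setting.

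For parts (iii) and (iv), the plan is to disprove the inequalities in the stated failure regions by constructing explicit sequences $(f_r)_{r \ge 1}$ of test functions such that $\|\Delta^\gamma f_r\|_p / \|\nabla f_r\|_p \to 0$ when $\gamma$ is in the asserted range. The two terms in $\min\{\max(1/p,1/\beta),\, 1/\beta + (1/p)(1-2/\beta)\}$ reflect two competing mechanisms: the first comes from H\"older interpolation between the $L^2$ spectral identity $\|\nabla f\|_2 = \|\Delta^{1/2} f\|_2$ and $L^1$ or $L^\infty$ size bounds under the doubling \eqref{D}, and the second comes from the intrinsic sub-Gaussian scaling. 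For the latter, a natural choice is $f_r := \psi(r^\beta\Delta)\1_{B(x_0,r)}$ for a Schwartz $\psi$ concentrated near $1$, which by spectral calculus satisfies $\|\Delta^\gamma f_r\|_p \approx r^{-\beta\gamma} V(x_0,r)^{1/p}$, while a matching lower bound on $\|\nabla f_r\|_p$ is obtained by transferring the $L^2$ identity on $f_r$ to $L^p$ via H\"older and the Davies--Gaffney off-diagonal decay implicit in \eqref{UE}. Part (iv) then follows either by an analogous dual construction or by invoking the duality $(R_{p,\gamma}) \Rightarrow (RR_{p',1-\gamma})$.

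The main obstacle will be the lower bound on $\|\nabla f_r\|_p$. In the Vicsek setting of \cite{DR24} one exploits one-dimensional bottleneck paths for an elementary pointwise estimate, but under only \eqref{D} and \eqref{UE} no preferred combinatorial structure is available, and the $L^p$ lower bound must be extracted from the $L^2$ spectral identity at the cost of a volume factor $V(x_0,r)^{1/2 - 1/p}$; the two branches of the failure region in (iii) correspond to the two opposing sides of this trade-off as $p$ varies around $2$. Once the scaling comparisons are in place on $(f_r)$, sending $r \to \infty$ forces $\gamma$ into the asserted region in (iii), and the ``in particular" assertion follows by intersecting the four regions with $\gamma = 1/2$: when $\beta > 2$, both boundary curves $\gamma = 1/p$ and $\gamma = 1/\beta + (1/p)(1-2/\beta)$ strictly separate from $1/2$ for $p \ne 2$ (one verifies directly that $1/2 - 1/\beta = \tfrac{1}{2}(1-2/\beta)$), so \eqref{Rp} holds if and only if $p \in (1,2]$ and \eqref{RRp} if and only if $p \in [2,\infty)$.
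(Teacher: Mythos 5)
There is a genuine gap, and it sits exactly where the real content of the theorem lies: the failure regions (iii)--(iv). For the positive parts (i)--(ii) your route is essentially the paper's (interpolate the trivial endpoint $(\mathrm{R}_{\infty,0})$ against $(\mathrm{R}_{2,1/2})$ and the $(1,2]$-range of \eqref{Rp} from \cite{CCFR15}, then dualize), but your specific implementation via Stein interpolation of $T_z=\nabla\Delta^{-z}$ needs bounds along the whole boundary line $\Re z=0$, i.e.\ $\|\nabla\Delta^{-it}f\|_\infty\lesssim B(t)\|f\|_\infty$; imaginary powers $\Delta^{-it}$ are in general \emph{not} bounded on $L^\infty$ (already false for $-\Delta$ on $\R^n$), so the $L^\infty$ endpoint cannot be used this way. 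The paper sidesteps this by replacing $\|\Delta^\gamma f\|_p$ with an equivalent Littlewood--Paley--Stein square function and interpolating sublinear operators (Theorem~\ref{ThInterpolation}); your argument should be repaired along those lines, or by interpolating against $(\mathrm{R}_{q,0})$ for finite $q$, at the cost of losing the endpoint $\gamma=1/p$.

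For (iii)--(iv) your plan is to exhibit test functions $f_r=\psi(r^\beta\Delta)\1_{B(x_0,r)}$ with $\|\Delta^\gamma f_r\|_p\approx r^{-\beta\gamma}V(x_0,r)^{1/p}$ and a matching lower bound on $\|\nabla f_r\|_p$, which you yourself flag as ``the main obstacle'' and then do not resolve. This is not a fixable detail but the crux: under only \eqref{D} and \eqref{UE} there is no reason the H\"older/volume-factor transfer from the $L^2$ identity produces the two-branch boundary $\min\{\max\{1/p,1/\beta\},\,1/\beta+\tfrac1p(1-\tfrac2\beta)\}$, and the spectral lower bound $\|\Delta^\gamma f_r\|_p\gtrsim r^{-\beta\gamma}V^{1/p}$ is also unjustified outside $p=2$. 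The paper's mechanism is entirely different and consists of three ingredients your sketch does not contain: (a) a Faber--Krahn-type inequality $\|f\|_p\lesssim r^{\alpha\beta}\|\Delta^\alpha f\|_p$ for $f$ supported in $B(x,r)$ (Proposition~\ref{prUE=>aFK}, proved from the reproducing formula $f=P^{k+1}f+\sum_{\ell\le k}\Delta P^\ell f$ and analyticity), which converts \eqref{RRpgg} into the Poincar\'e inequality $(\mathrm{P}_{p,\beta\gamma})$; (b) the failure of $(\mathrm{P}_{2,\gamma\beta})$ for $\gamma<1/2$ (Lemma~\ref{lemPp=2}), proved with truncated heat kernels $f_k=\max\{0,p_k(\cdot,y)-\tfrac{1}{kV(y,k^{1/\beta})}\}$, supported in balls of radius $\approx k^{1/\beta}(\log k)^{(\beta-1)/\beta}$ and satisfying $\|\nabla f_k\|_2^2\lesssim k^{-1}\|f_k\|_2^2$ via the time-derivative bound \eqref{dkUE}; and (c) interpolation of $(\mathrm{P}_{p,\beta\gamma})$ with $(\mathrm{P}_{\infty,1})$ to reach a contradiction at $p=2$ --- and $(\mathrm{P}_{\infty,1})$ is precisely where the non-degeneracy hypothesis \eqref{defLBE} enters, a hypothesis your argument never uses, which is itself a sign that the sketch cannot capture the stated region. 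The branch $\gamma<1/\beta$ is handled separately with the tent function $\max\{0,r-d(x_0,x)\}$ fed into the Poincar\'e inequality, and $\gamma<1/p$ for $p>2$ by the $p=2$ spectral argument plus interpolation with $(\mathrm{RR}_{1,1})$; none of these steps is recoverable from the proposal as written.
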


\begin{figure}[!ht] 
\centering
\caption{Theorem~\ref{MainTh}: Validity region of \eqref{Rpgg} and \eqref{RRpgg}.}
\label{figRp}
\includegraphics[width=0.8\textwidth]{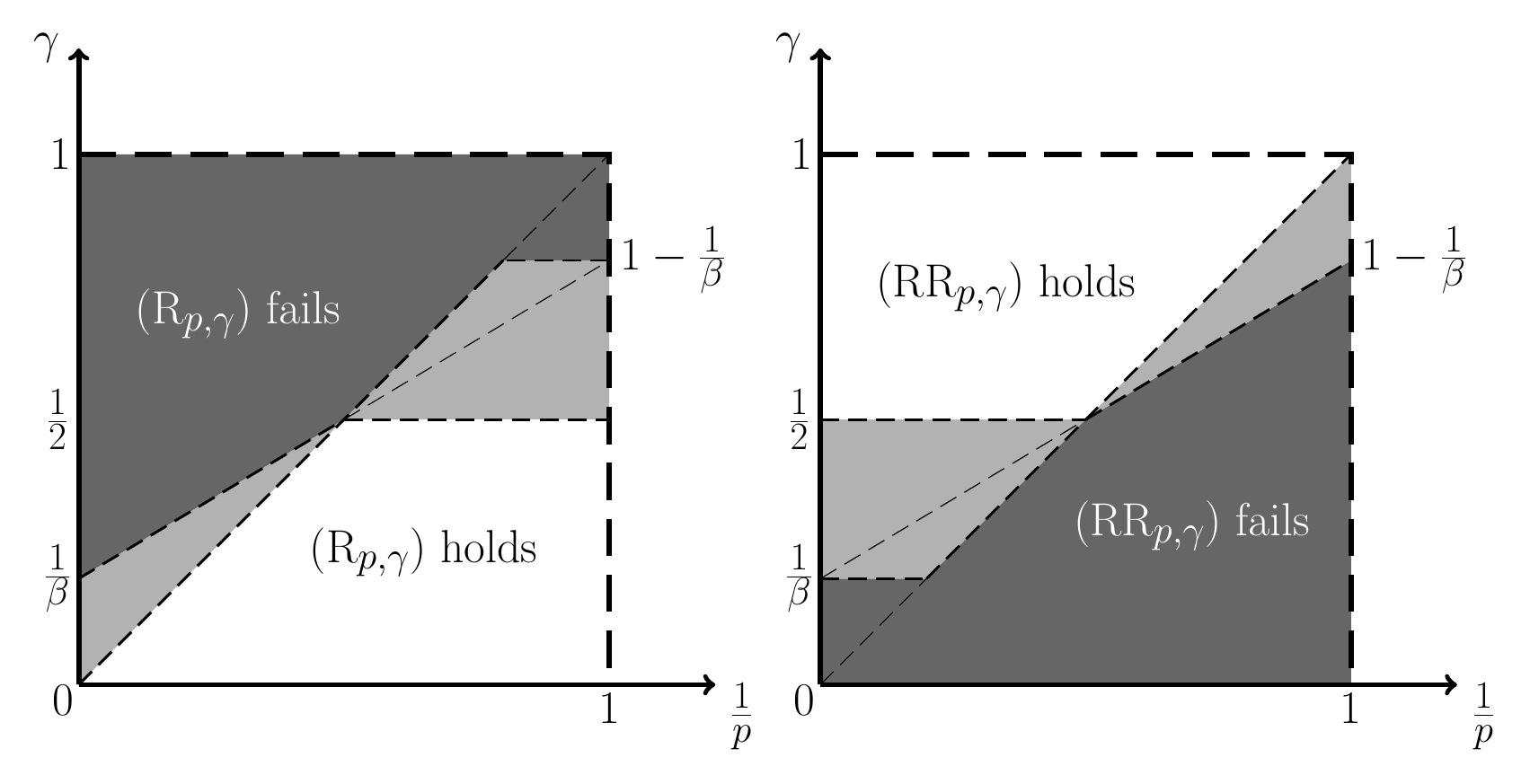}
\end{figure}

Note that items \textit{ii.} and \textit{iv.} in Theorem~\ref{MainTh} are obtained by duality (see Theorem~\ref{ThDuality}) from \textit{i.} and \textit{iii.}, respectively. The area of validity for \eqref{Rpgg} and \eqref{RRpgg} is illustrated in Figure~\ref{figRp}. The proof of the theorem combines several smaller results, and the structure of the argument is depicted in Figure~\ref{figRpproof}.

\begin{figure}[!ht] 
\centering
\caption{Structure of the proof of Theorem~\ref{MainTh}:  Validity or non-validity of \eqref{Rpgg} (left) and \eqref{RRpgg} (right).}
\label{figRpproof}
\includegraphics[width=0.8\textwidth]{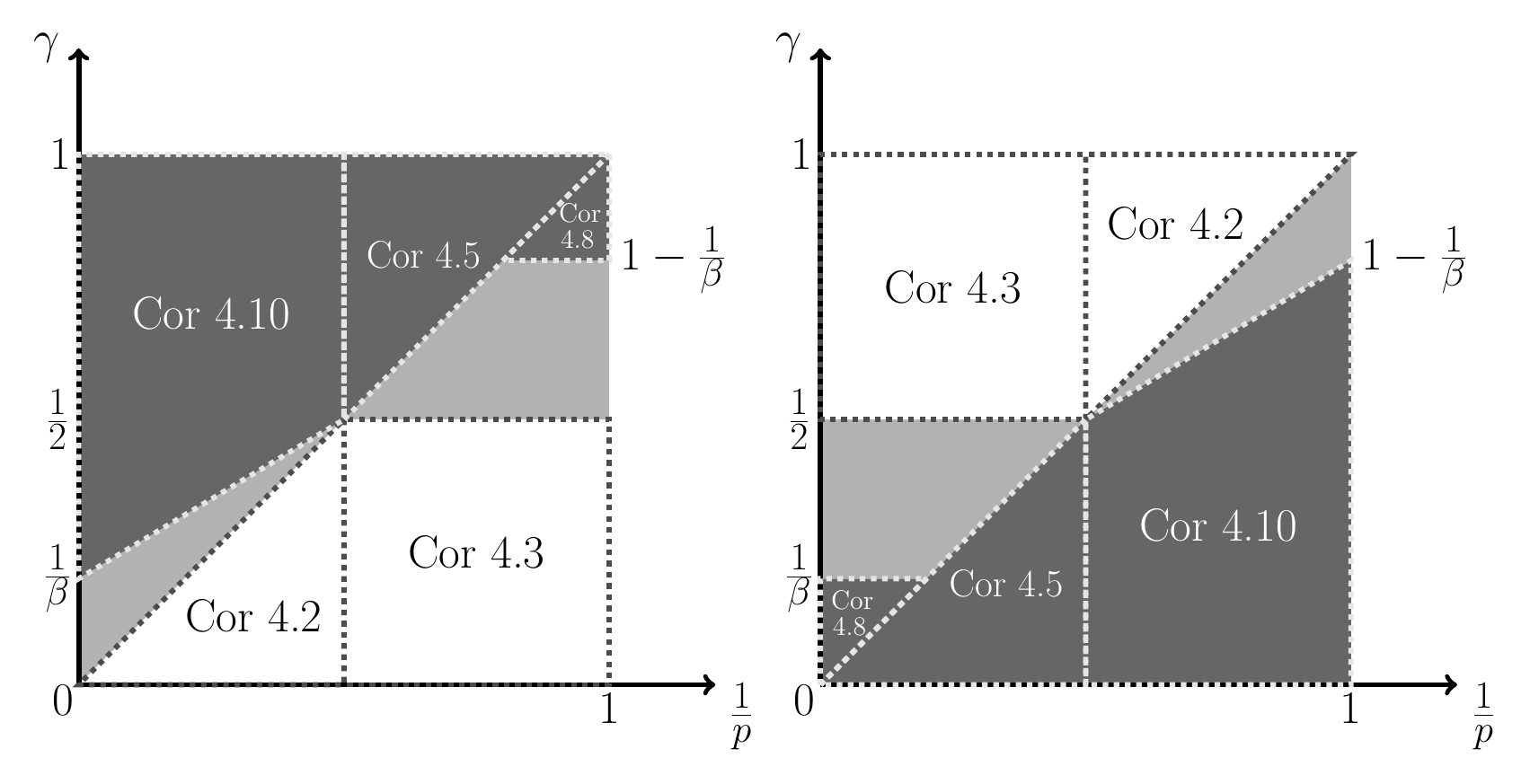}
\end{figure}

In the case of Vicsek cable systems (and in the case of Vicsek graphs), a more precise picture is available (see Theorem~\ref{ThDR}). However, the proofs in \cite{DR24} rely heavily on the specific structure of the Vicsek graphs, and a similar behavior cannot be expected to hold for all graphs with slow diffusion.

\medskip

For Riemannian manifolds, we obtain an analogous result:

\begin{theorem} \label{MainThRM}
Let \( (\Gamma, m) \) be a Riemannian manifold equipped with the (positive) Laplace–Beltrami operator \( \Delta \) and the Riemannian gradient \( \nabla \). Assume that \( \Gamma \) satisfies \eqref{LG}, \eqref{Dm}, and \eqref{UEm} for some \( \beta \geq 2 \). Then the following holds:
\begin{enumerate}[i.]
    \item \eqref{Rpgg} holds when \( p \in (1,\infty) \) and \( \gamma \leq \min\left\{\frac{1}{2}, \frac{1}{p} \right\} \),
    \item \eqref{RRpgg} holds when \( p \in (1,\infty) \) and \( \gamma \geq \max\left\{\frac{1}{2},  \frac{1}{p} \right\} \),
    \item \eqref{RRpgg} fails when \( p \in (1,\infty) \) and \( \gamma < \min\left\{ \max\left\{ \frac{1}{p}, \frac{1}{\beta} \right\}, \frac{1}{\beta} + \frac{1}{p} \left( 1 - \frac{2}{\beta} \right) \right\} \),
    \item \eqref{Rpgg} fails when \( p \in (1,\infty) \) and \( \gamma > \max\left\{ \min\left\{ \frac{1}{p}, 1 - \frac{1}{\beta} \right\}, \frac{1}{\beta} + \frac{1}{p} \left( 1 - \frac{2}{\beta} \right) \right\} \).
\end{enumerate}
In particular, when \( \beta > 2 \), \eqref{Rp} holds if and only if \( p \in (1,2]\) and \eqref{RRp} holds if and only if \( p \in (2,\infty] \).
\end{theorem}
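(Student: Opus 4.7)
The plan is to mirror the proof architecture of Theorem~\ref{MainTh}. The role of the built-in regularization by $e^{-\Delta}$ in \eqref{Rpgm} and \eqref{RRpgm} is to dispose of the local, Euclidean behavior of $\nabla$ and $\Delta^\gamma$: because the local gradient bound \eqref{LG} turns $\nabla e^{-\Delta}$ into a bounded operator on $L^p$, only the large-scale sub-Gaussian behavior governed by \eqref{UEm} and \eqref{Dm} controls the range of $(p,\gamma)$. This parallels exactly the dichotomy in the graph case, where the boundedness of $\nabla$ and $\Delta$ plays the same role; the four items \textit{i}--\textit{iv} can thus be attacked by the same four building blocks as in Theorem~\ref{MainTh}.

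For the positive results, I would split into two regimes. For $p \in (1,2]$ with $\gamma \leq 1/2$, I combine the known boundedness of the Riesz transform $\nabla \Delta^{-1/2}$ on $L^p$ under \eqref{UEm} (item (ii) in the introduction) with the fact that $\Delta^{1/2-\gamma}e^{-\Delta}$ is bounded on $L^p$ by holomorphic functional calculus. For $p \in (2,\infty)$ with $\gamma \leq 1/p$, the argument is more delicate and goes through the subordination formula
\[
\nabla e^{-\Delta}\Delta^{-\gamma} = \frac{1}{\Gamma(\gamma)}\int_0^\infty s^{\gamma-1}\, \nabla e^{-(1+s)\Delta}\, ds,
\]
the integral being controlled by $L^p$-bounds on $\nabla e^{-t\Delta}$, which themselves follow from \eqref{LG}, \eqref{UEm} and the analyticity of the semigroup. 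Item \textit{ii} then follows from item \textit{i} by the manifold analog of Theorem~\ref{ThDuality}, the formal adjoint of $\nabla e^{-\Delta}\Delta^{-\gamma}$ being essentially $\Delta^{-\gamma}e^{-\Delta}\diver$, which is precisely the operator appearing in the dual reverse estimate.

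For the counterexamples (items \textit{iii} and \textit{iv}), I plan to transplant the test functions constructed in the graph proof of Theorem~\ref{MainTh}. For each $(p,\gamma)$ in the forbidden region of item \textit{iii}, one produces, for arbitrarily large $R$, a function $f_R$ on the manifold for which the $L^p$-norm of $\nabla f_R$ grows strictly faster in $R$ than that of $\Delta^\gamma e^{-\Delta}f_R$: \eqref{UEm} governs the upper bound on the latter via spectral representations, while \eqref{LG} and \eqref{Dm} govern the lower bound on the former, and the exponents match those from the graph case. Item \textit{iv} is then obtained by duality. The main obstacle, I expect, is precisely to show that the regularization $e^{-\Delta}$ is harmless in these sharp large-scale counterexamples, i.e.\ that it does not erase the sub-Gaussian obstruction that the test functions exploit; this requires verifying that on the manifold, $e^{-\Delta}$ acts essentially trivially on functions localized at scales much larger than one, which is where the interplay of \eqref{LG} with the kernel bound \eqref{UEm} becomes most delicate.
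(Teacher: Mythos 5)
Your plan correctly identifies duality as the source of items \emph{ii} and \emph{iv}, and your factorization $\nabla e^{-\Delta}\Delta^{-\gamma}=\bigl(\nabla\Delta^{-1/2}\bigr)\bigl(\Delta^{1/2-\gamma}e^{-\Delta}\bigr)$ for $p\in(1,2]$ is a legitimate alternative to the paper's interpolation. But there are genuine gaps. For the positive result when $p>2$, the subordination integral $\int_0^\infty s^{\gamma-1}\nabla e^{-(1+s)\Delta}\,ds$ requires a quantitative decay of $\|\nabla e^{-t\Delta}\|_{p\to p}$ in $t$; \eqref{LG} only gives boundedness at time $1$, and composing with the contraction $e^{-(t-1)\Delta}$ yields no decay. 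One can get $\|\nabla e^{-t\Delta}\|_{p\to p}\lesssim t^{-1/p}$ by interpolating the $L^2$ bound $t^{-1/2}$ against the $L^\infty$ bound $O(1)$, but then the integral diverges logarithmically at the claimed endpoint $\gamma=1/p$. The paper avoids this by interpolating the \emph{inequalities} themselves (Theorem~\ref{ThInterpolation}, via Littlewood--Paley--Stein functionals) between $(\mathrm{R}_{p,0})$ — which is Lemma~\ref{lemRp1m}, proved from \eqref{LG} — and $(\mathrm{R}_{p,1/2})$ for $p\le 2$, which reaches the endpoint.

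The more serious gap is in item \emph{iii}. First, your counterexample is oriented the wrong way: to disprove \eqref{RRpgm} you need functions with $\|\Delta^\gamma e^{-\Delta}f_R\|_p\gg\|\nabla f_R\|_p$, not the reverse. Second, and more fundamentally, no single family of test functions produces the sharp line $\gamma<\frac{1}{\beta}+\frac{1}{p}(1-\frac{2}{\beta})$ for $p<2$. The paper's mechanism is indirect: a Faber--Krahn-type proposition (Proposition~\ref{prUE=>aFKm}, proved from a reproducing formula $e^{-\Delta}f=e^{-(u+1)\Delta}f+\int_0^u\Delta e^{-(s+1)\Delta}f\,ds$ with $u\simeq\lambda^\beta r^\beta$ and the normalization hypothesis $\|e^{-\Delta}f\|_p\ge\epsilon\|f\|_p$) shows \eqref{RRpgm}~$\Rightarrow$~$(\mathrm{P}_{p,\beta\gamma})$; this localizes the problem to functions supported in balls, where explicit test functions exist only at the two extremes — truncated heat kernels $f_t=\max\{0,h_t(\cdot,y)-\frac{1}{tV(y,t^{1/\beta})}\}$ at $p=2$ (Lemma~\ref{lemPp=2m}) and distance cones at $p=\infty$ — and the sharp exponent emerges from interpolating the Poincar\'e inequalities $(\mathrm{P}_{p,\beta\gamma})$ and $(\mathrm{P}_{\infty,1})$ down to a contradiction with the failure of $(\mathrm{P}_{2,\gamma\beta})$ for $\gamma<1/2$. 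Separately, the region $\gamma<1/p$ with $p>2$ is handled by a spectral argument at $p=2$ plus interpolation with $(\mathrm{RR}_{1,1})$, which your plan does not mention. You are right that controlling the effect of $e^{-\Delta}$ on large-scale test functions is a real issue, but identifying it is not the same as resolving it; the paper resolves it precisely through the $\epsilon$-normalization in Proposition~\ref{prUE=>aFKm} and the comparison $\|e^{-t\Delta}f_t\|_2\approx\|f_t\|_2$ in Lemma~\ref{lemPp=2m}.
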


\begin{figure}[!ht] 
\centering
\caption{Structure of the proof of Theorem~\ref{MainThRM}: Validity or non-validity of \eqref{Rpgg} (left) and \eqref{RRpgg} (right).}
\label{figRpproofm}
\includegraphics[width=0.8\textwidth]{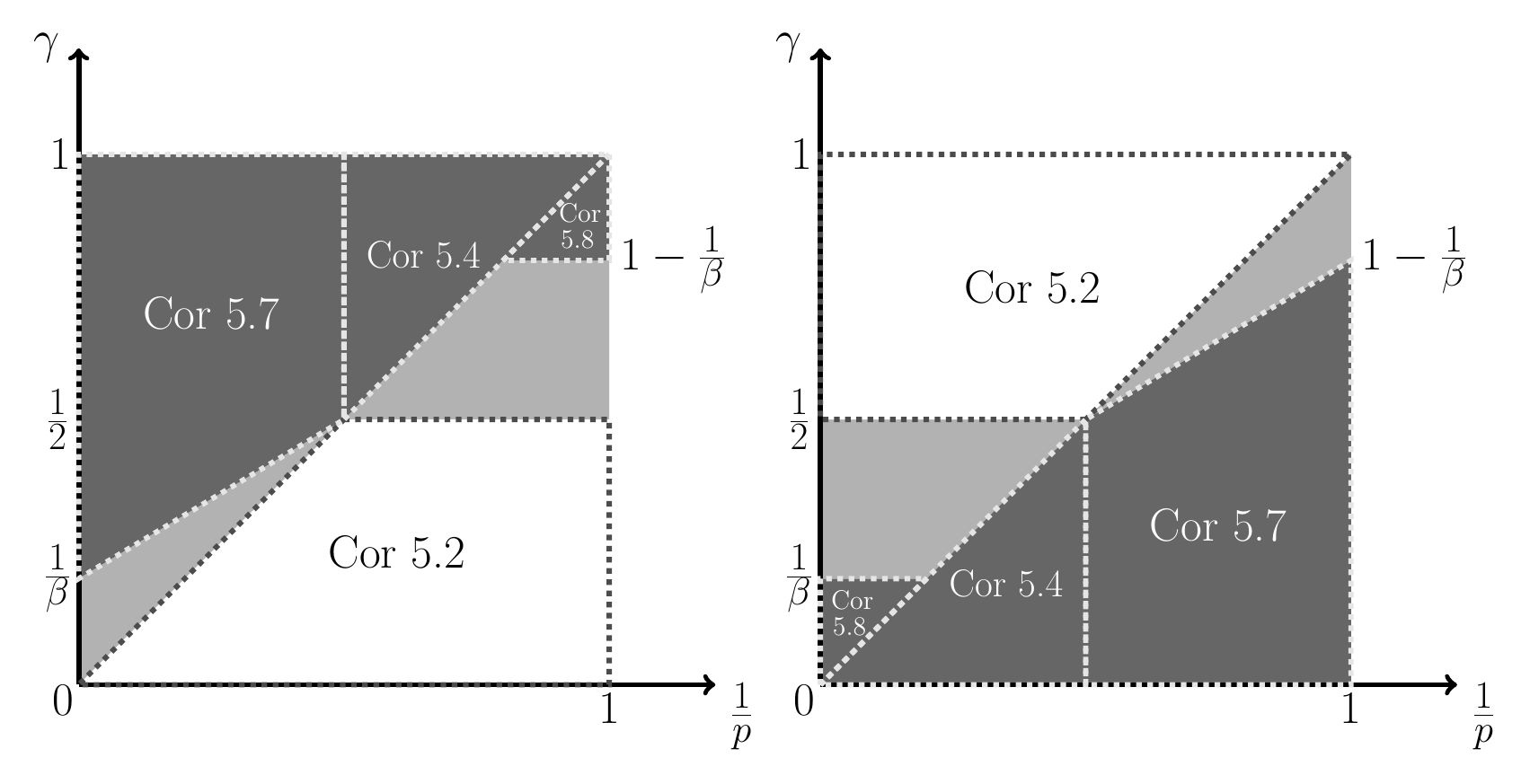}
\end{figure}

In Theorem~\ref{MainThRM}, the condition \eqref{LG} is not required for the proof of all cases. Specifically, it is only necessary for Corollaries \ref{Corg>1/p,1/2m} and \ref{Corg<1/p} (see Figure \ref{figRpproofm}). The condition \eqref{LG} is a local geometric condition on the Riemannian manifold ensuring the boundedness of \( \nabla e^{-\Delta} \) on \( L^p \) for all \( p \in [1,\infty] \). It is plausible that this assumption could be removed via a more refined argument, or by changing the definitions \eqref{Rpgm} and \eqref{RRpgm}. However, since our focus is on the non-local behavior of the manifold, we do not view the local assumption \eqref{LG} as a serious limitation in this work.

\medskip

We conclude this introduction with a list of conjectures that provide directions for future research.

\medskip

\noindent {\bf Conjectures.} All conjectures concern graphs equipped with an analytic Markov semigroup, and analogous ones can be stated for Riemannian manifolds.

\begin{enumerate}[A.]
\item (Coulhon, Duong) \eqref{Rp} holds for all $p\in (1,2]$.

\emph{Comment:} This conjecture is supported by examples of non-doubling Lie groups and tree graphs where \eqref{Rp} holds for all $p\in (1,2)$; see \cite{Sjo99, SV08, LMSTV??}. However, all existing proofs rely critically on either the group structure or a carefully chosen weight on the tree. Therefore, it remains unclear whether the doubling condition can be completely removed in general.

\smallskip

\item If $\Gamma$ is doubling and \eqref{Rp} holds for some $p \in (2, \infty)$, then the Markov kernel satisfies the Gaussian upper bound, i.e., \eqref{UE} with $\beta = 2$.

\emph{Comment:} This conjecture is a natural extension of the results in this article. However, proving it would require a new approach that does not rely on \emph{a priori} estimates on the heat kernel or the underlying Markov process.

\smallskip

\item (Devyver, Russ) On Vicsek graphs, the estimate \eqref{RRpgg} holds for all $p \in (2, \infty)$ and $\gamma > \frac{1}{\beta} + \frac{1}{p}\left(2 - \frac{2}{\beta}\right)$.

\emph{Comment:} One can further extend this conjecture by proposing that, on Vicsek graphs, \eqref{Rpgg} holds for all $p \in (1, 2)$ and $\gamma < \frac{1}{\beta} + \frac{1}{p}\left(2 - \frac{2}{\beta}\right)$. When $\gamma = \frac{1}{\beta} + \frac{1}{p}\left(2 - \frac{2}{\beta}\right)$, the validity of \eqref{Rpgg} and \eqref{RRpgg} remains an open question, and no conjecture has yet been proposed.

\smallskip

\item Let $\Gamma$ be a graph satisfying $V(x,r) \approx r^D$ and \eqref{UE}. If \eqref{RRpgg} holds for all $p \in (1, 2)$ and $\gamma > \frac{1}{\beta} + \frac{1}{p}\left(2 - \frac{2}{\beta}\right)$, then either $\beta = 2$ or the graph $\Gamma$ is a Vicsek graph, meaning that $D = \beta - 1$.

\emph{Comment:} This final conjecture is our own. It reflects our belief that the behavior observed in Vicsek graphs - illustrated in Figure \ref{figRpV} - cannot be generalized to non-Vicsek graphs.

\end{enumerate}

\medskip

\noindent {\bf Notation.} We write $A \lesssim B$ to indicate that there exists a constant $C > 0$, independent of the relevant parameters, such that $A \leq C B$. Similarly, we write $A \approx B$ when both $A \lesssim B$ and $B \lesssim A$.

\subsection*{Acknowledgements}
The author expresses gratitude to Emmanuel Russ for the insightful discussions on this topic, particularly for presenting his articles \cite{DRY21,DR24}, and for his encouragement in pursuing this work. He also warmly thanks Meng Yang for pointing out a mistake in an earlier version of this preprint.

\section{Definitions in graphs and Riemannian manifolds}

\label{Sspaces}

\subsection{Definitions of graphs}

A (weighted, unoriented) graph \((\Gamma,\mu)\) is the pairing of a countable (infinite) set \(\Gamma\) with a symmetric, nonnegative weight function \(\mu\) on \(\Gamma \times \Gamma\). The set of edges is defined by
\[
E := \{(x,y)\in \Gamma^2 \,:\, \mu_{xy} > 0\},
\]
and we write \(x \sim y\) when \((x,y) \in E\) (that is, \(x\) and \(y\) are neighbors). The graph is naturally equipped with a measure, a distance, and a random walk. The measure \(m\) on \(\Gamma\) is defined by
\[
m(x) = \sum_{x \sim y} \mu_{xy},
\]
and more generally, for a subset \(E \subset \Gamma\), we define
\[
m(E) := \sum_{x \in E} m(x).
\]
The distance \(d(x,y)\) between two vertices \(x,y \in \Gamma\) is the length of the shortest path connecting them, i.e., the smallest integer \(N\) such that there exists a sequence \(x = x_0, x_1, \dots, x_N = y\) with \(x_{i-1} \sim x_i\) for all \(i = 1,\dots,N\). We denote by \(B(x,r)\) the (discrete) ball
\[
B(x,r) := \{y \in \Gamma \,:\, d(x,y) \leq r\},
\]
and define \(V(x,r) := m(B(x,r))\). The weights \(\mu_{xy}\) define a discrete-time, reversible Markov kernel \(p\) via
\[
p(x,y) := \frac{\mu_{xy}}{m(x)m(y)}.
\]
The associated random walk is given by the discrete kernels \(p_k(x,y)\), defined recursively for all \(k \geq 0\) by
\begin{equation}
\left\{
\begin{array}{ll}
p_0(x,y) = \frac{\delta(x,y)}{m(y)}, \\
p_{k+1}(x,y) = \sum_{z \in \Gamma} p(x,z) \, p_k(z,y) \, m(z).
\end{array}
\right.
\end{equation}
For all \(k \geq 1\), the kernel \(p_k\) is symmetric, i.e., \(p_k(x,y) = p_k(y,x)\) for all \(x,y \in \Gamma\). Moreover, \(p_k(x,y)m(y)\) represents the probability of moving from \(x\) to \(y\) in \(k\) steps. The Markov operator \(P\) is defined via the kernel \(p\) with respect to the measure \(m\): for any function \(f\) and any \(x \in \Gamma\),
\[
Pf(x) = \sum_{y \in \Gamma} p(x,y) f(y) m(y).
\]
It is straightforward to check that \(p_k\) is the kernel of \(P^k\), and that \(P^k\) is a contraction on \(L^p(\Gamma)\) for all \(k \in \mathbb{N}\) and all \(p \in [1,\infty]\).

\medskip

\noindent The following assumption on the graph will always be made:

\begin{itemize}
    \item \textbf{Local uniform finiteness:} There exists \(M \in \mathbb{N}\) such that, for all \(x \in \Gamma\),
    \[
    \#B(x,1) = \#\left(\{y \in \Gamma \,:\, y \sim x\} \cup \{x\} \right) \leq M.
    \]
\end{itemize}
Moreover, the following additional conditions will be assumed when required:
\begin{itemize}
    \item \textbf{Non-degenerate probability transitions:} The Markov process satisfies
    \begin{equation} \label{defLBE}
    p(x,y)m(y) \geq \varepsilon \qquad \text{for all } x,y \in \Gamma \text{ with } x \sim y,\, x \neq y.
    \end{equation}
    \item \textbf{Analyticity of the semigroup:} The Markov semigroup \((P^k)_{k \in \mathbb{N}}\) is said to be \emph{analytic} if there exists \(C > 0\) such that
    \[
    \|(I-P)P^k f\|_{L^2(\Gamma)} \leq \frac{C}{k+1} \|f\|_{L^2(\Gamma)} \qquad \text{for all } k \in \mathbb{N},\, f \in L^2(\Gamma).
    \]
    
    \item \textbf{Doubling condition:} The graph \((\Gamma,\mu)\) is said to be \emph{doubling}- or to satisfy \eqref{D} - if there exists \(C > 0\) such that
    \begin{equation} \label{D} \tag{D}
    V(x,2r) \leq C V(x,r) \qquad \text{for all } x \in \Gamma,\ r \in \mathbb{N}.
    \end{equation}
    
    \item \textbf{Sub-Gaussian estimates:} The Markov kernel satisfies pointwise sub-Gaussian upper bounds with escape rate \(r^\beta\), i.e., there exist constants \(C,c > 0\) such that, for all \(k \in \mathbb{N}^*\) and all \(x,y \in \Gamma\),
    \begin{equation} \label{UE} \tag{UE$_\beta$}
    p_k(x,y) \leq \frac{C}{V(x,k^{1/\beta})} \exp\left(-c \left[\frac{d(x,y)^\beta}{k}\right]^{\frac{1}{\beta-1}}\right).
    \end{equation}
\end{itemize}

\medskip

Some comments are in order. The analyticity of Markov operators - the discrete analogue of semigroup analyticity - has been studied in, e.g., \cite{CSC90,Dun06,LM14,FenLB}. For reversible Markov processes, the following characterization holds:

\begin{proposition}[\cite{FenLB}, Theorem 1.7] \label{PrLB}
Let \((\Gamma,\mu)\) be a locally uniformly finite and doubling graph. Then the following are equivalent:
\begin{enumerate}[(i)]
    \item \((P^k)_{k \in \mathbb{N}}\) is analytic in \(L^2\);
    \item \((P^k)_{k \in \mathbb{N}}\) is analytic in \(L^q\) for all \(q \in (1,\infty)\), i.e., there exists \(C_q > 0\) such that
    \[
    \|(I-P)P^k f\|_{L^q(\Gamma)} \leq \frac{C_q}{k+1} \|f\|_{L^q(\Gamma)} \qquad \text{for all } k \in \mathbb{N},\, f \in L^q(\Gamma);
    \]
    \item \(-1\) is not in the \(L^2\)-spectrum of \(P\);
    \item There exist \(\ell \in \mathbb{N}\), \(\varepsilon > 0\) such that
    \[
    p_{2\ell+1}(x,x)m(x) \geq \varepsilon \qquad \text{for all } x \in \Gamma.
    \]
\end{enumerate}
If, in addition, the Markov kernel has non-degenerate probability transitions, then the above are also equivalent to:
\begin{enumerate}[(i)] \addtocounter{enumi}{4}
    \item There exists \(\ell \in \mathbb{N}\) such that for every vertex \(x \in \Gamma\), there exists a closed path of length \(2\ell+1\) starting and ending at \(x\).
\end{enumerate}
\end{proposition}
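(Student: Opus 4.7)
The proof proceeds along the cycle $\textit{(ii)} \Rightarrow \textit{(i)} \Leftrightarrow \textit{(iii)} \Leftrightarrow \textit{(iv)}$ together with $\textit{(i)} \Rightarrow \textit{(ii)}$; the combinatorial equivalence $\textit{(iv)} \Leftrightarrow \textit{(v)}$ is then added under the non-degenerate transition hypothesis. The equivalence $\textit{(i)} \Leftrightarrow \textit{(iii)}$ follows from the spectral theorem applied to the self-adjoint contraction $P$ on $L^2$: one has
\[
\|(I-P)P^k\|_{L^2 \to L^2} = \sup_{\lambda \in \sigma(P)} (1-\lambda)\,|\lambda|^k,
\]
and on $\sigma(P) \cap [0,1]$ this supremum is at most $1/(k+1)$ (the maximum of $\lambda \mapsto (1-\lambda)\lambda^k$ is attained at $\lambda = k/(k+1)$), while on $\sigma(P) \cap [-1, 0]$ it is bounded by $2\lambda_*^k$ with $\lambda_* := \sup\{|\lambda| : \lambda \in \sigma(P),\ \lambda \leq 0\}$, which decays in $k$ uniformly if and only if $\lambda_* < 1$, i.e., $-1 \notin \sigma(P)$.

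The heart of the argument is $\textit{(iii)} \Leftrightarrow \textit{(iv)}$. For the easy direction $\textit{(iv)} \Rightarrow \textit{(iii)}$, I would exploit the identity
\[
\langle (I + P^{2\ell+1}) f, f \rangle = \frac{1}{2} \sum_{x,y \in \Gamma} p_{2\ell+1}(x,y)\,\bigl(f(x) + f(y)\bigr)^2\,m(x)\,m(y),
\]
which follows from expanding the square and using the symmetry of $p_{2\ell+1}$ together with the Markov property $\sum_y p_{2\ell+1}(x,y)m(y) = 1$. Keeping only the diagonal contributions $y = x$ and invoking $\textit{(iv)}$ yields $\langle (I+P^{2\ell+1}) f, f\rangle \geq 2\varepsilon \|f\|_2^2$, hence $-1 \notin \sigma(P^{2\ell+1})$; since the real $(2\ell+1)$st-root is injective on $[-1,1]$, the spectral mapping theorem delivers $-1 \notin \sigma(P)$, which is $\textit{(iii)}$. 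For the converse $\textit{(iii)} \Rightarrow \textit{(iv)}$, I would argue by contrapositive: from a sequence of vertices $x_\ell$ witnessing $p_{2\ell+1}(x_\ell,x_\ell)m(x_\ell) \to 0$, I would extract an approximate eigenvector of $P$ at $-1$ by analyzing the spectral measures $\mu_{x_\ell}$ of $P$ associated to the normalized indicators $\1_{x_\ell}/\sqrt{m(x_\ell)}$. The doubling and local uniform finiteness hypotheses provide the volume control needed to patch these localized witnesses into a genuine $L^2$ approximate eigenvector, for instance by applying a spectral projection of $P$ to a sufficiently small neighborhood of $-1$.

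For $\textit{(i)} \Rightarrow \textit{(ii)}$, the standard route is to deduce off-diagonal estimates on $(I-P)P^k$ from $L^2$-analyticity and doubling, then run Calder\'on--Zygmund theory on the doubling metric measure space $(\Gamma, d, m)$ to obtain weak-type $(1,1)$ bounds for the family $\{k(I-P)P^k\}_{k \geq 1}$; Marcinkiewicz interpolation with the $L^2$ estimate then yields $L^q$-analyticity for $q \in (1,2]$, and duality using the self-adjointness of $P$ covers $q \in (2,\infty)$. The reverse $\textit{(ii)} \Rightarrow \textit{(i)}$ is trivial (take $q=2$). Finally, under the non-degenerate transition assumption, $\textit{(iv)} \Leftrightarrow \textit{(v)}$ is a direct combinatorial computation: any closed walk $x_0 \sim x_1 \sim \cdots \sim x_{2\ell+1} = x_0$ contributes at least $\varepsilon_0^{2\ell+1}$ to $p_{2\ell+1}(x_0, x_0)m(x_0)$ by the lower bound $p(x,y)m(y) \geq \varepsilon_0$ on neighbors, while conversely $p_{2\ell+1}(x,x) > 0$ requires the existence of at least one such walk.

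The main obstacle I expect is the converse $\textit{(iii)} \Rightarrow \textit{(iv)}$: a naive functional calculus argument based on the spectral gap $\sigma(P) \subseteq [-1+\delta, 1]$ yields only $p_{2\ell+1}(x,x) m(x) \geq -(1-\delta)^{2\ell+1}$, which is consistent with $p_{2\ell+1}(x,x)m(x) = 0$ for every $\ell$. Extracting a genuinely positive uniform-in-$x$ lower bound therefore requires a careful localization argument that couples the spectral gap to the discrete geometry through the doubling and local uniform finiteness hypotheses.
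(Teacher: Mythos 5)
First, note that the paper does not prove this proposition at all: it is imported verbatim from \cite{FenLB} (Theorem 1.7), so there is no internal proof to compare against and your proposal can only be judged on its own merits. The parts you work out in detail are correct: the spectral-calculus equivalence $(i)\Leftrightarrow(iii)$, the identity $\langle (I+P^{2\ell+1})f,f\rangle=\tfrac12\sum_{x,y}p_{2\ell+1}(x,y)(f(x)+f(y))^2m(x)m(y)$ giving $(iv)\Rightarrow(iii)$, and the combinatorial equivalence $(iv)\Leftrightarrow(v)$ (modulo the small caveat that steps of a closed walk must use genuine edges $x_i\neq x_{i+1}$, since \eqref{defLBE} only bounds $p(x,y)m(y)$ from below for $x\neq y$). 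The direction $(i)\Rightarrow(ii)$ is only a sketch — with no pointwise kernel bounds assumed you would need $L^2$ off-diagonal (Gaffney-type) estimates for $k(I-P)P^k$ and a Blunck--Kunstmann-type weak $(1,1)$ criterion rather than classical Calder\'on--Zygmund theory — but the route is standard and plausible.

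The genuine gap is $(iii)\Rightarrow(iv)$, which you yourself flag as the main obstacle; your proposed contrapositive (extracting an approximate eigenvector at $-1$ from vertices where $p_{2\ell+1}(x,x)m(x)$ is small, then ``patching'') does not work as stated, because smallness of $\int\lambda^{2\ell+1}\,d\mu_x(\lambda)$ is equally consistent with the spectral measure $\mu_x$ of $e_x:=\1_{\{x\}}/\sqrt{m(x)}$ concentrating near $0$ rather than near $-1$, so no mass near $-1$ can be extracted without further input. The missing ingredient is a uniform on-diagonal lower bound: by Cauchy--Schwarz, $1=\sum_{y\in B(x,k)}p_k(x,y)m(y)\leq p_{2k}(x,x)^{1/2}V(x,k)^{1/2}$, hence $p_{2k}(x,x)m(x)\geq m(x)/V(x,k)\gtrsim k^{-D}$ by doubling (using $V(x,1)\lesssim m(x)$). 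Since $p_{2k}(x,x)m(x)=\int\lambda^{2k}\,d\mu_x(\lambda)$ and $\mu_x$ is a probability measure, this forces $\mu_x([1-\eta,1])\geq c_\eta>0$ uniformly in $x$ for suitable $\eta>0$; choosing $\eta<\delta$ where $\sigma(P)\subset[-1+\delta,1]$, the positive contribution $(1-\eta)^{2\ell+1}c_\eta$ eventually dominates the worst-case negative contribution $(1-\delta)^{2\ell+1}$, giving $(iv)$ for $\ell$ large. This is where doubling and local uniform finiteness actually enter the proof, and it is the step your proposal does not supply.
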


\medskip

The doubling condition implies that the volume growth of balls is at most polynomial, i.e., there exist constants \(C,D > 0\) such that
\begin{equation} \label{defD}
V(x, \Lambda r) \leq C \Lambda^D V(x,r) \qquad \text{for all } x \in \Gamma,\ r \in \mathbb{N},\ \Lambda \geq 1.
\end{equation}
Conversely, since the graph is unbounded and connected, its volume growth is at least polynomial: there exist \(C,\delta > 0\) such that
\begin{equation} \label{defdelta}
V(x, \Lambda r) \geq C^{-1} \Lambda^\delta V(x,r) \qquad \text{for all } x \in \Gamma,\ r \in \mathbb{N},\ \Lambda \geq 1.
\end{equation}

The parameter \(\beta\) in \eqref{UE} corresponds to the escape rate of the diffusion: the average time needed for a particle starting at \(x\) to escape the ball \(B(x,r)\) scales like \(k = r^\beta\). Barlow showed in \cite{Bar04} that one can construct a graph \((\Gamma^{D,\beta}, \mu^{D,\beta})\) satisfying \(V(x,r) \approx r^D\) and \eqref{UE} if and only if \(D \in (1,\infty)\) and \(\beta \in [2, D+1]\). The case \(\beta = D+1\) corresponds to the Vicsek graphs (see Theorem~\ref{ThCCFR} and Figure~\ref{fig1}). The case \(D = \log_2(3)\), \(\beta = \log_2(5)\) corresponds to the graphical Sierpinski gasket, shown in Figure~\ref{Sierpinski6}.

\begin{figure}[!ht]
\centering
\caption{Graphical Sierpinski gasket (6th step of the construction)}
\label{Sierpinski6}
\includegraphics[width=0.45\textwidth]{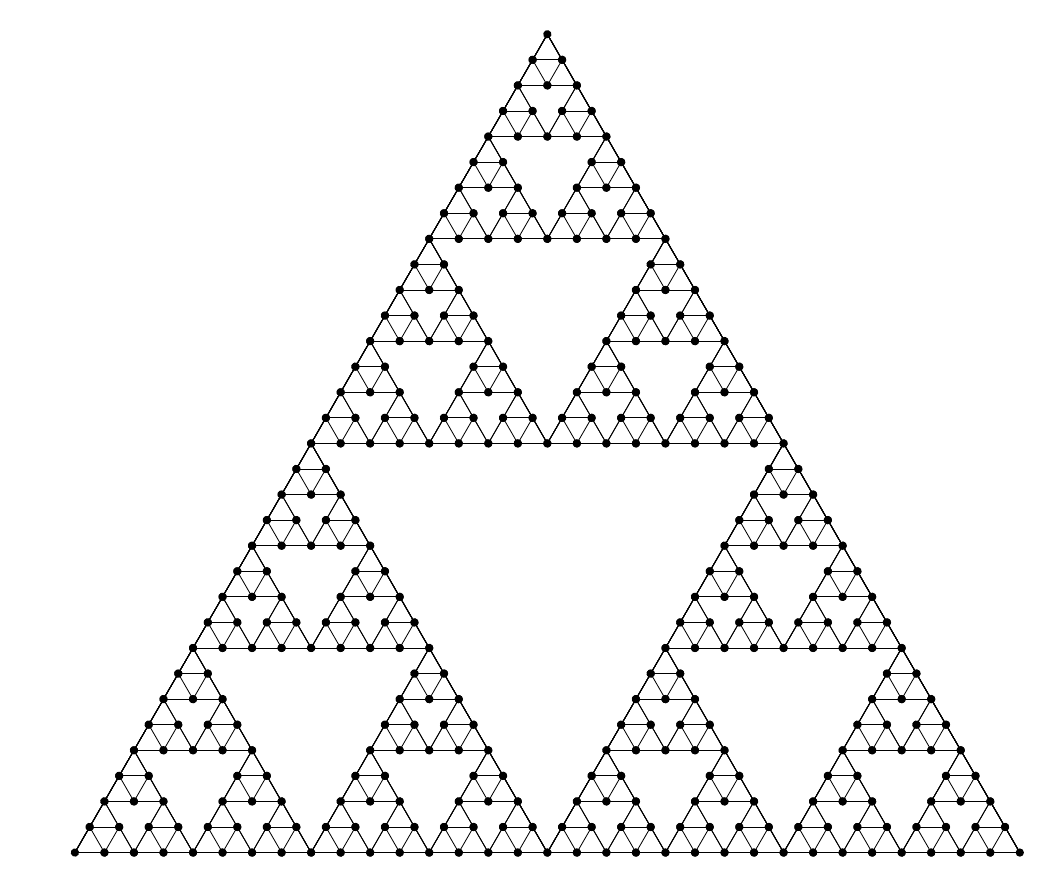}
\end{figure}

\medskip

Combining analyticity and sub-Gaussian estimates, one obtains (see \cite[Theorem 1.1]{Dun06}) the following bound on the discrete-time derivative of $p_k$:
\begin{equation} \label{dkUE}
|p_k(x,y) - p_{k-1}(x,y)| \leq \frac{C}{k V(x,k^{1/\beta})} \exp\left(- c \left[\frac{d(x,y)^\beta}{k} \right]^{\frac{1}{\beta - 1}}\right),
\end{equation}
for all \(k \in \mathbb{N}^*\), \(x,y \in \Gamma\), with constants \(C,c > 0\).

\medskip

The Laplacian is defined as \(\Delta := I - P\), and the gradient length by
\[
\nabla f(x) := \left( \frac{1}{2} \sum_{y \sim x} p(x,y) |f(y) - f(x)|^2 m(y) \right)^{1/2}.
\]
We have the isometry
\begin{equation} \label{E2} \tag{E\(_2\)}
\|\nabla f\|_{L^2}^2 = \sum_{x \in \Gamma} (I - P)[f](x) f(x) = \|\Delta^{1/2} f\|_{L^2}^2 \qquad \text{ for } f\in L^2(\Gamma).
\end{equation}

Finally, we say that \((\Gamma,\mu)\) satisfies the Poincaré inequality \eqref{Ppa} if there exists \(C > 0\) such that for any ball \(B := B(x,r)\) and any function \(f\) supported in \(B\),
\begin{equation} \label{Ppa} \tag{P\(_{p,\alpha}\)}
\|f\|_{L^p(B)} \leq C r^\alpha \|\nabla f\|_{L^p(B)}.
\end{equation}
The case $p=2$, $\alpha =1$ is known as the Faber–Krahn inequality.

\subsection{Definitions in Riemannian manifolds} \label{Ssmanifold}
When referring to a Riemannian manifold $(\Gamma, m)$, we mean a smooth, complete, connected, and non-compact Riemannian manifold equipped with the geodesic distance $d$ and the Riemannian measure $m$. We denote by $B(x,r)$ the open ball centered at $x \in \Gamma$ with geodesic radius $r > 0$, i.e.,
\[
B(x,r) := \{y \in \Gamma : d(x,y) < r\}.
\]
As in the graph setting, we write $V(x,r) := m(B(x,r))$ and say that the Riemannian manifold is \emph{doubling} if there exists a constant $C > 0$ such that
\begin{equation} \label{Dm} \tag{D}
V(x,2r) \leq C V(x,r) \qquad \text{ for all } x \in \Gamma,\; r > 0.
\end{equation}
The doubling property implies that the volume of balls grows at most and at least polynomially. That is, there exist constants $C > 0$, $D > 0$, and $\delta > 0$ such that
\begin{equation} \label{defDm}
V(x, \Lambda r) \leq C \Lambda^D V(x,r) \qquad \text{ for all } x \in \Gamma,\; r \in \mathbb{N},\; \Lambda \geq 1,
\end{equation}
and, since $\Gamma$ is connected and unbounded,
\begin{equation} \label{defdeltam}
V(x, \Lambda r) \geq C^{-1} \Lambda^\delta V(x,r) \qquad \text{ for all } x \in \Gamma,\; r \in \mathbb{N},\; \Lambda \geq 1.
\end{equation}

We denote by $\nabla$ the Riemannian gradient and by $\Delta$ the (non-negative) Laplace–Beltrami operator on $\Gamma$. The heat semigroup $(e^{-t\Delta})_{t>0}$ is generated by $-\Delta$, and we denote its kernel by $h_t(x,y)$. It is well known (see, e.g., \cite[Theorem 7.13]{Gri}) that $h_t(x,y)$ is everywhere positive, symmetric in $x$ and $y$, and smooth in $(t,x,y) \in (0,\infty) \times \Gamma \times \Gamma$. 
Moreover, the heat semigroup is analytic on $L^p$ for all $p \in (1, \infty)$, meaning that for any $\gamma > 0$, there exists $C_{p,\gamma} > 0$ such that
\begin{equation} \label{analyt}
\|\Delta^{\gamma} e^{-t\Delta}\|_{L^p \to L^p} \leq \frac{C_{p,\gamma}}{t^{\gamma}}.
\end{equation}
For $\beta \geq 2$, we say that $\Gamma$ satisfies \eqref{UEm} if there exist constants $C, c > 0$ such that
\def\arraystretch{2.5}
\begin{equation} \label{UEm} \tag{UE$_\beta$}
h_t(x,y) \leq C \left\{ 
\begin{array}{ll}
\exp\left(- c \dfrac{d^2(x,y)}{t} \right) & \text{for } t \in (0,1], \; x, y \in \Gamma, \\
\exp\left(- c \left[\dfrac{d^\beta(x,y)}{t} \right]^{1/(\beta - 1)} \right) & \text{for } t > 1, \; x, y \in \Gamma.
\end{array} \right.
\end{equation}
By replacing vertices by spheres and edges by cylinder, we can construct Riemannian manifolds that have the same non-local diffusive behavior as graphs, see for instance \cite[Appendix]{CCFR15}.  Using this transformation, a graph satisfying \eqref{D} and \eqref{UE} will become a Riemannian manifold that satisfies \eqref{Dm} and \eqref{UE}. A Vicsek manifold is a Riemannian manifold obtained from a Vicsek graph using the presented transformation.

\medskip

By combining the pointwise estimate \eqref{UEm} with the analyticity, we obtain
\begin{equation} \label{dtUE}
|\partial_t h_t(x,y)| \leq \frac{C}{t} \left\{
\begin{array}{ll}
\exp\left(- c \dfrac{d^2(x,y)}{t} \right) & \text{for } t \in (0,1], \; x, y \in \Gamma, \\
\exp\left(- c \left[\dfrac{d^\beta(x,y)}{t} \right]^{1/(\beta - 1)} \right) & \text{for } t > 1, \; x, y \in \Gamma,
\end{array} \right.
\end{equation}
with constants $C, c > 0$ independent of $t > 0$, $x, y \in \Gamma$.

Since $\Delta$ is self-adjoint, spectral theory applies to fractional powers of $\Delta$ on a dense subspace of $L^2$, and thus also on a dense subspace of $L^p$ for $p \in [1, \infty)$. From the definitions of the Riemannian gradient and the Laplace–Beltrami operator, we have the identity
\begin{equation} \label{E2m} \tag{E$_2$}
\|\nabla f\|_{L^2(\Gamma)} = \langle \Delta f, f \rangle = \|\Delta^{1/2} f\|_{L^2(\Gamma)} \qquad \text{for all } f \in C_0^\infty(\Gamma).
\end{equation}

We say that $(\Gamma, m)$ satisfies the Poincaré inequality \eqref{Ppam} if, for any $\epsilon > 0$, there exists a constant $C_\epsilon > 0$ such that for any ball $B := B(x,r) \subset \Gamma$ with radius $r \geq 1$ and any function $f \in C^\infty_0(B)$ satisfying $\|e^{-t\Delta} f\|_p \geq \epsilon \|f\|_p$, we have
\begin{equation} \label{Ppam} \tag{P$_{p,\alpha}$}
\|e^{-\Delta} f\|_{L^p(B)} \leq C_\epsilon r^{\alpha} \|\nabla f\|_{L^p(B)}.
\end{equation}
The above Poincar\'e inequality will be an intermediate tool in our proof, and the presence of $e^{-\Delta}$ in \eqref{Ppam} is directly linked to the presence of $e^{-\Delta}$ in \eqref{RRpgm}.

\medskip

We also assume a local pointwise bound on the gradient of the heat kernel. Specifically, we say that $\Gamma$ satisfies \eqref{LG} if there exist constants $C, c > 0$ such that
\begin{equation} \label{LG} \tag{LG}
|\nabla_x h_{1}(x,y)| \leq \frac{C}{V(x,1)} \exp(- c d(x,y)^2) \qquad \text{for all } x, y \in \Gamma.
\end{equation}
This estimate on the gradient of the heat kernel holds if the Riemannian manifold is well approximated by Euclidean planes in balls of radius 1. Indeed, Theorem 1.1 in \cite{DRY21}\footnote{More precisely, a local version of that theorem, which can be established using the same proof.} shows that $\Gamma$ satisfies \eqref{LG} whenever it satisfies \eqref{Dm} and \eqref{UEm}, along with the following local assumptions:
\begin{itemize}
\item \emph{Local Ahlfors regularity of balls:} There exists $d \in \mathbb{N}$ such that $V(x,r) \approx r^d$ for all $x \in \Gamma$ and $r \in (0,1)$.
\item \emph{Local reverse Hölder estimate for gradients:} There exists $C > 0$ such that for every ball $B = B(x,r)$ with $r < 1$ and every function $u$ satisfying $\Delta u = 0$ in $2B$, we have
\begin{equation} \label{LRH} \tag{LRH}
\sup_{B} |\nabla u| \leq C \fint_{2B} |u| \, dm.
\end{equation}
\end{itemize}

\section{Basic tools}
In this section, either \( \Gamma = (\Gamma, \mu) \) is a (weighted, unoriented) graph that is locally uniformly finite and whose Markov semigroup is analytic, or \( \Gamma = (\Gamma, m) \) is a Riemannian manifold equipped with the positive Laplace-Beltrami operator \( \Delta \) and the Riemannian gradient \( \nabla \).

\begin{theorem}[Interpolation] \label{ThInterpolation}
Let \( p, q, r \in [1,\infty] \) and \( \gamma, \delta, \epsilon \in [0,1] \) be such that there exists \( \theta \in (0,1) \) satisfying 
\[
\frac{1}{r} = \frac{\theta}{p} + \frac{1-\theta}{q} \quad \text{and} \quad \epsilon = \theta \gamma + (1-\theta) \delta.
\]
Then
\[
\text{(R}_{p,\gamma}\text{)} + \text{(R}_{q,\delta}\text{)} \implies \text{(R}_{r,\epsilon}\text{)},
\]
\[
\text{(RR}_{p,\gamma}\text{)} + \text{(RR}_{q,\delta}\text{)} \implies \text{(RR}_{r,\epsilon}\text{)},
\]
and if \( \beta \geq 2 \),
\[
\text{(P}_{p,\beta \gamma}\text{)} + \text{(P}_{q,\beta \delta}\text{)} \implies \text{(P}_{r,\beta \epsilon}\text{)}.
\]
\end{theorem}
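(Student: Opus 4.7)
All three implications fit into a single framework: Stein's interpolation theorem for analytic families of operators in the strip $\overline{S} := \{z \in \mathbb{C}: 0 \le \operatorname{Re} z \le 1\}$. The key technical input is the bounded $H^\infty$ functional calculus of $\Delta$ on $L^p$ for $p \in (1,\infty)$, which yields $\|\Delta^{i\tau}\|_{L^p \to L^p} \lesssim e^{c|\tau|}$ for some $c < \pi$; under the standing analyticity hypothesis on the semigroup (combined with the sub-Gaussian estimates), this is classical. A preparatory step is to linearize $\nabla$: write $\nabla f(x) = \|d f(x,\cdot)\|_{\ell^2}$ pointwise, where $d$ is an honest linear vector-valued gradient, so that $\|\nabla f\|_{L^p} = \|d f\|_{L^p(\ell^2)}$.

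For the implication $(\mathrm{R}_{p,\gamma}) + (\mathrm{R}_{q,\delta}) \Rightarrow (\mathrm{R}_{r,\epsilon})$, I would consider the analytic family
\[
T_z := d\,\Delta^{-[z\gamma + (1-z)\delta]}, \qquad z \in \overline{S}.
\]
On $\operatorname{Re} z = 0$, factor $T_{i\tau} = (d\Delta^{-\delta})\,\Delta^{-i\tau(\gamma-\delta)}$: the first factor is bounded $L^q \to L^q(\ell^2)$ by $(\mathrm{R}_{q,\delta})$, and the second has $L^q \to L^q$ norm $\lesssim e^{c|\tau|}$ by the functional calculus. Symmetrically on $\operatorname{Re} z = 1$ using $(\mathrm{R}_{p,\gamma})$. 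Stein's theorem at $z = \theta$ then delivers $T_\theta: L^r \to L^r(\ell^2)$ bounded, which is exactly $(\mathrm{R}_{r,\epsilon})$.

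The $(\mathrm{RR})$ case I handle by duality. A short computation shows that $(\mathrm{R}_{p,\gamma})$ is equivalent to $(\mathrm{RR}_{p',\,1-\gamma})$: the formal adjoint of $d\,\Delta^{-\gamma}: L^p \to L^p(\ell^2)$ is $\Delta^{-\gamma}\, d^*: L^{p'}(\ell^2) \to L^{p'}$, and testing at $F = d f$ yields $\|\Delta^{1-\gamma} f\|_{p'} \lesssim \|\nabla f\|_{p'}$. Since the relations $1/r = \theta/p + (1-\theta)/q$ and $\epsilon = \theta\gamma + (1-\theta)\delta$ are stable under $(p,\gamma) \leftrightarrow (p',1-\gamma)$, the $(\mathrm{RR})$ interpolation reduces to the $(\mathrm{R})$ one already proven. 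For the Poincaré case, read $(\mathrm{P}_{p,\beta\gamma})$ as the norm bound $\le Cr^{\beta\gamma}$ for the linear map $\pi: d f \mapsto f$ from $d(W_B) \subset L^p(\ell^2)$ to $L^p(B)$, where $W_B$ denotes functions supported in the fixed ball $B = B(x,r)$. Applying Riesz--Thorin (Stein--Weiss for the mixed-norm spaces $L^p(\ell^2)$) to $\pi$ on the subspace $d(W_B)$ gives norm $\le (Cr^{\beta\gamma})^\theta (Cr^{\beta\delta})^{1-\theta} = Cr^{\beta\epsilon}$, which is $(\mathrm{P}_{r,\beta\epsilon})$.

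The main obstacle is twofold: ensuring the bounded $H^\infty$ calculus with sub-exponential growth of $\|\Delta^{i\tau}\|_{L^p \to L^p}$ on all $L^p$ with $p \in (1,\infty)$ (so that Stein applies), and, for the Poincaré case, checking that the subspace $d(W_B)$ interpolates correctly between the $L^p(\ell^2)$ scale. Both are standard under the paper's standing assumptions, but must be invoked with care; once they are in hand, the three interpolations reduce to routine applications of Stein's theorem and Riesz--Thorin.
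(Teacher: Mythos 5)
Your handling of the (R) implication is essentially the paper's argument in different clothing: the paper also linearizes $\nabla$ so that $\|\nabla f\|_{p}$ becomes an $L^{p}(\ell^{2})$-norm, and encodes the dependence on $\gamma$ analytically — via the Littlewood–Paley–Stein characterization of $\|\Delta^{\gamma}f\|_{p}$ where you use bounded imaginary powers. Those two inputs are of comparable depth and both are available under the standing hypotheses, so for (R) your route is acceptable (modulo the usual care coming from $0\in\operatorname{Sp}(\Delta)$, which forces one to interpret $\Delta^{-\gamma}$ on a suitable dense class rather than as a globally defined operator).

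The genuine gap is your reduction of the (RR) case to the (R) case by duality. Duality gives only one direction. The adjoint of $d\,\Delta^{-\gamma}\colon L^{p}\to L^{p}(\ell^{2})$ is $\Delta^{-\gamma}d^{*}$ acting on \emph{all} of $L^{p'}(\ell^{2})$, and restricting to inputs of the form $F=df$ (where $d^{*}df=\Delta f$) yields (R$_{p,\gamma}$) $\Rightarrow$ (RR$_{p',1-\gamma}$) — which is exactly what Theorem~\ref{ThDuality} asserts, and no more. The converse would require upgrading the inequality $\|\Delta^{1-\gamma}f\|_{p'}\lesssim\|df\|_{L^{p'}(\ell^{2})}$, which only controls $\Delta^{-\gamma}d^{*}$ on the range of $d$, to boundedness of $\Delta^{-\gamma}d^{*}$ on the whole of $L^{p'}(\ell^{2})$; this implication is known to fail in general (on the connected sum of two copies of $\R^{n}$, $n\geq 3$, the reverse inequality holds for every $p$ while the Riesz transform is unbounded for $p\geq n$), and indeed the asymmetry between (R) and (RR) is the central theme of this paper. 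Your scheme needs precisely the false direction: to interpolate (RR$_{p,\gamma}$) and (RR$_{q,\delta}$) you would first have to pass to (R$_{p',1-\gamma}$) and (R$_{q',1-\delta}$). The paper instead interpolates the (RR) inequalities directly, as sublinear estimates $\|G_{\gamma}f\|_{p}\lesssim\|\nabla f\|_{p}$ with $G_{\gamma}$ the weighted square function, never inverting the duality. A related (smaller) weak point is your Poincaré argument: complex interpolation of the closed subspaces $d(W_{B})\subset L^{p}(\ell^{2})$ is not automatic — subspaces of an interpolation couple need not interpolate — so applying Riesz–Thorin to the map $df\mapsto f$ defined only on that range cannot be waved through as standard; here too the paper interpolates the sublinear inequality itself rather than a linear operator defined on a range.
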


\begin{proof}
The theorem is a straightforward consequence of complex interpolation of \( L^p \) spaces. Let us add a few remarks to clarify this:
\begin{itemize}
    \item When \( \gamma \in (0,1) \) and \( p \in (1,\infty) \), the norm \( \|\Delta^{\gamma} f\|_p \) is equivalent to the \( L^p \)-norm of a Littlewood-Paley-Stein functional. Specifically: if \( \Gamma \) is a Riemannian manifold,
        \[
        \|\Delta^{\gamma} f\|_p \approx \left\| \left( \int_0^\infty t^{1-2\gamma} |\Delta e^{-t\Delta} f|^2 \, dt \right)^{1/2} \right\|_p,
        \]
        and if \( \Gamma \) is a graph,
        \[
        \|\Delta^{\gamma} f\|_p \approx \left\| \left( \sum_{k=0}^\infty k^{1-2\gamma} |\Delta P^k f|^2 \right)^{1/2} \right\|_p.
        \]
        See for instance \cite{Stein} for the equivalence between the $L^p$ norm and the $L^p$ norm of continuous Littlewood-Paley-Stein functionals, and \cite{LM14,FenLPS} for the analogue with discrete time Littlewood-Paley-Stein functionals.

    \item When \( \Gamma \) is a graph, the quantity \( \nabla f(x) \) is itself an \( L^2 \)-norm over the edges adjacent to \( x \), so \( \|\nabla f\|_p \) can be interpreted as an \( L^pL^2 \)-norm on the edges of the graph.
\end{itemize}

These observations indicate that the interpolation results in Theorem~\ref{ThInterpolation} are indeed consequences of complex interpolation of sublinear operators in \( L^p \).
\end{proof}

\begin{theorem}[Duality] \label{ThDuality}
Let \( p \in (1,\infty) \) and \( \gamma \in [0,1] \). Then
\[
\text{(R}_{p,\gamma}\text{)} \implies \text{(RR}_{p',1-\gamma}\text{)},
\]
where \( p' \) denotes the Hölder conjugate of \( p \).
\end{theorem}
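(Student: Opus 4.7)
The plan is to prove $(\mathrm{RR}_{p',1-\gamma})$ by a direct $L^p$--$L^{p'}$ duality argument. Fix $f$ in the natural dense subdomain (finitely supported functions on a graph, or $C^\infty_0$ on a Riemannian manifold). The duality pairing yields
$$\|\Delta^{1-\gamma} f\|_{p'} = \sup_h \bigl|\langle \Delta^{1-\gamma} f, h \rangle\bigr|,$$
with the supremum taken over a suitable dense subclass of $h$ with $\|h\|_p \leq 1$. The goal is then to rewrite the pairing on the right so that $\nabla f$ and the hypothesis $(\mathrm{R}_{p,\gamma})$ can be brought in.

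The central identity I would establish is
$$\langle \Delta^{1-\gamma} f, h \rangle = \langle \nabla f, \nabla \Delta^{-\gamma} h \rangle,$$
which follows from self-adjoint spectral calculus ($\Delta^{1-\gamma} = \Delta \cdot \Delta^{-\gamma}$) combined with the Dirichlet form integration by parts $\langle \Delta u, v \rangle = \langle \nabla u, \nabla v \rangle$, i.e.\ the polarized version of \eqref{E2}. H\"older's inequality then gives $|\langle \Delta^{1-\gamma} f, h \rangle| \leq \|\nabla f\|_{p'} \|\nabla \Delta^{-\gamma} h\|_p$, and applying $(\mathrm{R}_{p,\gamma})$ to the function $\Delta^{-\gamma} h$ yields $\|\nabla \Delta^{-\gamma} h\|_p \leq C \|\Delta^\gamma \Delta^{-\gamma} h\|_p = C \|h\|_p$. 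Taking the supremum over $h$ produces exactly $(\mathrm{RR}_{p',1-\gamma})$. On a Riemannian manifold the same chain works after inserting the extra smoothing $e^{-\Delta}$: moving it between factors by spectral calculus yields $\langle \Delta^{1-\gamma} e^{-\Delta} f, h \rangle = \langle \nabla f, \nabla e^{-\Delta} \Delta^{-\gamma} h \rangle$, and the manifold form of $(\mathrm{R}_{p,\gamma})$ (which already carries its own $e^{-\Delta}$) closes the estimate identically.

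The main obstacle is purely technical: $\Delta^{-\gamma}$ is in general unbounded, so both the identity and the application of $(\mathrm{R}_{p,\gamma})$ must be justified on a carefully chosen dense class. The most transparent fix is to reparametrize the supremum, restricting to $h$ of the form $h = \Delta^\gamma u$ with $u$ a test function, so that $\Delta^{-\gamma} h = u$ is manifestly well-defined and $\Delta^{-\gamma}$ never appears explicitly. One then has to check that this restricted supremum still recovers the full $L^{p'}$ norm of $\Delta^{1-\gamma} f$, which amounts to density of the range of $\Delta^\gamma$ in $L^p$; this follows from functional calculus together with the triviality of the $L^2$-kernel of $\Delta$ on the unbounded spaces considered here. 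An equivalent alternative is to regularize via $(\Delta+\varepsilon)^{-\gamma}$ and let $\varepsilon \to 0$. Once this technicality is resolved, the proof collapses to the three-line duality chain above.
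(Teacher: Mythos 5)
Your argument is exactly the classical duality scheme that the paper invokes by reference (it cites Lemma 1.4 of \cite{DR24} rather than writing it out): pair $\Delta^{1-\gamma}f$ against $h=\Delta^{\gamma}u$, use the polarized Dirichlet-form identity and the pointwise Cauchy--Schwarz bound $|\langle\Delta f,u\rangle|\leq\|\nabla f\|_{p'}\|\nabla u\|_{p}$, apply (R$_{p,\gamma}$) to $u$, and handle the density/regularization issue for the range of $\Delta^{\gamma}$. This matches the paper's intended proof, including the insertion of $e^{-\Delta}$ in the manifold case, so there is nothing to add.
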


\begin{proof}
This result is also classical. In the case of Riemannian manifolds, the proof follows the same reasoning as Lemma 1.4 in \cite{DR24}. The case of graphs is an even simpler variant.
\end{proof}

\section{Proof of Theorem \ref{MainTh}} \label{Sproofg}
\subsection{First results}

\begin{lemma} \label{lemRp1} 
Let $(\Gamma,\mu)$ be a graph with the local doubling property, i.e., there exists a constant $C > 0$ such that $V(x,1) \leq C m(x)$ for all $x \in \Gamma$. Then the inequalities (R$_{p,0}$) and (RR$_{p,1}$) hold for all $p \in [1,\infty]$. More precisely, for all $f \in L^p(\Gamma)$ and $x \in \Gamma$, we have
\begin{equation} \label{Rp0a}
|\nabla f(x)| \leq \frac{C}{V(x,1)} \sum_{y \in B(x,1)} |f(y)|\, m(y),
\end{equation}
where $C>0$ depends only on the local doubling constant, and
\begin{equation} \label{Rp1a}
|\Delta f(x)| \leq \sqrt{2} \nabla f(x).
\end{equation}
\end{lemma}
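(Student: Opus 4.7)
The plan is to establish the two pointwise inequalities \eqref{Rp0a} and \eqref{Rp1a} separately; the $L^p$ statements \text{(R}$_{p,0}$\text{)} and \text{(RR}$_{p,1}$\text{)} then follow essentially by taking $L^p$ norms.

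I would begin with \eqref{Rp1a}, which is the cleaner of the two. Since $\sum_{y} p(x,y) m(y) = \sum_y \mu_{xy}/m(x) = 1$, one can write
\[
\Delta f(x) = (I-P)f(x) = \sum_{y\sim x} p(x,y) m(y)\,\bigl(f(x)-f(y)\bigr).
\]
Applying Cauchy--Schwarz with the probability weights $p(x,y)m(y)$ immediately gives
\[
|\Delta f(x)|^2 \leq \sum_{y\sim x} p(x,y) m(y)\,|f(x)-f(y)|^2 = 2\,|\nabla f(x)|^2,
\]
which is \eqref{Rp1a}. Taking the $L^p$ norm yields \text{(RR}$_{p,1}$\text{)} for every $p\in[1,\infty]$.

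For \eqref{Rp0a}, I would first bound $|\nabla f(x)|$ by a maximum of $|f|$ on $B(x,1)$ and then convert the max into a mean using local doubling. Expanding the definition and using $p(x,y)m(y) = \mu_{xy}/m(x) \leq 1$ together with $|f(y)-f(x)|^2 \leq 2|f(y)|^2+2|f(x)|^2$, one finds
\[
|\nabla f(x)|^2 \leq 2 \sum_{y\in B(x,1)} |f(y)|^2,
\]
and local uniform finiteness ($\#B(x,1)\leq M$) gives $|\nabla f(x)| \leq \sqrt{2M}\,\max_{y\in B(x,1)} |f(y)|$. To pass to an average, choose $y^*\in B(x,1)$ achieving the maximum: two applications of local doubling yield $m(y^*) \geq V(x,1)/C^2$ (first $V(x,1)\leq Cm(x)\leq CV(y^*,1)$ since $x\in B(y^*,1)$, then $V(y^*,1)\leq Cm(y^*)$), and so
\[
\frac{1}{V(x,1)} \sum_{y\in B(x,1)} |f(y)|m(y) \geq \frac{m(y^*)}{V(x,1)} |f(y^*)| \gtrsim \max_{y\in B(x,1)}|f(y)|.
\]
Combining gives \eqref{Rp0a}.

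Finally, \text{(R}$_{p,0}$\text{)} follows from \eqref{Rp0a} by a direct Hölder/Fubini computation: raising both sides to the $p$th power, applying Hölder's inequality to $\sum_{y\in B(x,1)} |f(y)|m(y)$ with exponent $p$ against the measure $m\!\!\upharpoonright\! B(x,1)$ (of total mass $V(x,1)$), and then swapping the order of summation, one absorbs the factors of $V(x,1)$ using $m(x)\leq V(x,1)$ and bounds the number of $x$'s with $x\sim y$ by $M$ to obtain $\|\nabla f\|_p \lesssim \|f\|_p$. The case $p=\infty$ is immediate from \eqref{Rp0a}, since the right-hand side is trivially $\leq C\|f\|_\infty$.

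There is no real obstacle in this argument: \eqref{Rp1a} is a one-line Cauchy--Schwarz; the only mildly delicate point is the lower bound $m(y^*) \gtrsim V(x,1)$ used in \eqref{Rp0a}, where one must invoke local doubling at both $x$ and at the neighbor $y^*$ rather than just at $x$.
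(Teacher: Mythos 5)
Your proof is correct and follows essentially the same route as the paper: the Cauchy--Schwarz step for \eqref{Rp1a} is identical, and for \eqref{Rp0a} and the $L^p$ bound you use the same pointwise-estimate-plus-Jensen-plus-Fubini structure. The only cosmetic difference is that you pass through $\max_{B(x,1)}|f|$ and a maximizing vertex $y^*$, whereas the paper bounds $\sum_{y\in B(x,1)}|f(y)|$ directly by the weighted average via $V(x,1)\lesssim m(y)$ for every neighbor $y$; both hinge on the same double application of local doubling.
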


\begin{proof}
For any $x \in \Gamma$ and any function $f$ on $\Gamma$, we have
\begin{align*}
|\nabla f(x)| 
&= \left( \frac{1}{2} \sum_{y \sim x} p(x,y) |f(x) - f(y)|^2\, m(y) \right)^{1/2} \\
&\leq \frac{1}{\sqrt{2}} |f(x)| + \left( \frac{1}{2} \sum_{y \sim x} |f(y)|^2 \right)^{1/2} \\
&\lesssim \sum_{y \in B(x,1)} |f(y)| \lesssim \frac{1}{V(x,1)} \sum_{y \in B(x,1)} |f(y)|\, m(y),
\end{align*}
where the last inequality follows from the local doubling property. We then estimate the $L^p$ norm:
\begin{align*}
\|\nabla f\|_p^p 
&\lesssim \sum_{x \in \Gamma} \left( \frac{1}{V(x,1)} \sum_{y \in B(x,1)} |f(y)|\, m(y) \right)^p m(x) \\
&\leq \sum_{x \in \Gamma} \frac{1}{V(x,1)} \sum_{y \in B(x,1)} |f(y)|^p\, m(y)\, m(x) \\
&= \sum_{y \in \Gamma} |f(y)|^p\, m(y) \sum_{x \in B(y,1)} \frac{1}{V(x,1)} \lesssim \|f\|_p^p,
\end{align*}
where we again used the local doubling property:
\[
\sum_{x \in B(y,1)} \frac{1}{V(x,1)} \leq \sum_{x \in B(y,1)} \frac{1}{m(y)} \lesssim \sum_{x \in B(y,1)} \frac{1}{V(y,1)} = 1.
\]
To prove \eqref{Rp1a}, observe:
\begin{align*}
|\Delta f(x)| 
&= \left| \sum_{y \sim x} p(x,y) [f(x) - f(y)]\, m(y) \right| \\
&\leq \sum_{y \sim x} p(x,y) |f(x) - f(y)|\, m(y) \\
&\leq \left( \sum_{y \sim x} p(x,y) |f(x) - f(y)|^2\, m(y) \right)^{1/2} = \sqrt{2} \nabla f(x).
\end{align*}
This concludes the proof.
\end{proof}

\begin{corollary} \label{Corg>1/p}
Let $(\Gamma,\mu)$ be a graph with the local doubling property. Then the bound \eqref{Rpgg} holds for all $p \in [2,\infty)$ and $\gamma \in [0,\frac{1}{p}]$, and the bound (RR$_{p,\gamma}$) holds for all $p \in (1,2]$ and $\gamma \in [\frac{1}{p},1]$.
\end{corollary}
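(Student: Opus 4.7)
The plan is to combine the two endpoint estimates from Lemma~\ref{lemRp1} and the identity \eqref{E2}, then use Theorem~\ref{ThInterpolation} and Theorem~\ref{ThDuality}. This is essentially a routine bookkeeping argument, so no serious obstacle is expected.

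First I would collect the endpoints. From Lemma~\ref{lemRp1} we have (R$_{\infty,0}$), i.e.\ $\|\nabla f\|_\infty \lesssim \|f\|_\infty$, and from the identity \eqref{E2} we have (R$_{2,1/2}$), namely $\|\nabla f\|_2 = \|\Delta^{1/2}f\|_2$. Applying Theorem~\ref{ThInterpolation} with $p=2$, $q=\infty$, $\gamma = 1/2$, $\delta=0$, and $\theta = 2/r \in (0,1)$ for $r \in (2,\infty)$, the indices balance to
\[
\frac{1}{r} = \frac{\theta}{2} + \frac{1-\theta}{\infty}, \qquad \epsilon = \theta \cdot \tfrac{1}{2} + (1-\theta)\cdot 0 = \tfrac{1}{r},
\]
which gives (R$_{r,1/r}$) for every $r \in [2,\infty)$ (the endpoint $r=\infty$ being already known).

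Next, to cover the full range $\gamma \in [0, 1/p]$, I would interpolate again, this time at a fixed integrability exponent. Since (R$_{p,0}$) holds by Lemma~\ref{lemRp1} and (R$_{p,1/p}$) has just been established for $p \in [2,\infty)$, Theorem~\ref{ThInterpolation} applied with $q=p$ and any $\theta \in (0,1)$ yields (R$_{p,\gamma}$) for every $\gamma \in (0, 1/p)$, hence for every $\gamma \in [0,1/p]$. This settles the first half of the corollary.

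Finally, for the reverse Riesz statement I would simply dualize. Theorem~\ref{ThDuality} converts (R$_{p,\gamma}$) into (RR$_{p',1-\gamma}$). As $p$ ranges over $[2,\infty)$, the conjugate exponent $p'$ ranges over $(1,2]$; as $\gamma$ ranges over $[0,1/p]$, the exponent $1-\gamma$ ranges over $[1-1/p,1] = [1/p',1]$. Thus (RR$_{q,\eta}$) holds for every $q \in (1,2]$ and every $\eta \in [1/q,1]$, which is exactly the second half of the corollary. The only point that requires a bit of care is tracking the dual range correctly, but there is no genuine difficulty here.
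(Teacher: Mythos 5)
Your argument is correct, and the first half is essentially the paper's: both rest on the endpoints (R$_{p,0}$) from Lemma~\ref{lemRp1} and (R$_{2,1/2}$) from \eqref{E2}, combined via Theorem~\ref{ThInterpolation} (the paper does this in one interpolation step, joining $(1/q,0)$ to $(1/2,1/2)$ in the $(1/p,\gamma)$-plane, whereas you do it in two; this is cosmetic). Where you genuinely diverge is the reverse inequality: you obtain (RR$_{q,\eta}$) for $q\in(1,2]$, $\eta\in[1/q,1]$ by dualizing the direct statement through Theorem~\ref{ThDuality}, and your bookkeeping of the dual range ($1-1/p=1/p'$) is right. The paper instead never invokes duality here: it interpolates the two reverse endpoints (RR$_{p,1}$) --- which Lemma~\ref{lemRp1} provides directly via the pointwise bound \eqref{Rp1a}, $|\Delta f|\leq\sqrt{2}\,\nabla f$ --- and (RR$_{2,1/2}$), again from \eqref{E2}. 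The two routes cover the same region. Your duality route mirrors how the paper derives items \emph{ii.}\ and \emph{iv.}\ of Theorem~\ref{MainTh} from \emph{i.}\ and \emph{iii.}, but it makes the second half of the corollary logically dependent on Theorem~\ref{ThDuality} (whose graph case is only sketched in the paper); the paper's direct interpolation of (RR$_{p,1}$) with (RR$_{2,1/2}$) is self-contained at the same cost, since the needed endpoint is already proved in Lemma~\ref{lemRp1}. Either way the conclusion stands.
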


\begin{proof}
Lemma \ref{lemRp1} guarantees that (R$_{p,0}$) holds for all $p \in [1,\infty]$, while (R$_{2,1/2}$) holds by construction of the gradient in \eqref{E2}. The conclusion for \eqref{Rpgg} follows by interpolation via Theorem \ref{ThInterpolation}. Similarly, interpolating (RR$_{p,1}$) - established in Lemma \ref{lemRp1} - and (RR$_{2,1/2}$) yields the claimed range for \eqref{RRpgg}.
\end{proof}

\begin{corollary} \label{Corg>1/2}
Let $(\Gamma,\mu)$ be an unbounded doubling graph such that the Markov semigroup is analytic and satisfies \eqref{UE}. Then \eqref{Rpgg} holds for all $p \in (1,2)$ and $\gamma \in [0,\frac{1}{2}]$, and \eqref{RRpgg} holds for all $p \in (2,\infty)$ and $\gamma \in [\frac{1}{2},1]$.
\end{corollary}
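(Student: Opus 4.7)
The plan is to bootstrap the classical endpoint result \eqref{Rp} on the range $p \in (1,2]$ (which is available in the sub-Gaussian setting from \cite{CCFR15} under precisely the hypotheses of the corollary: doubling, analytic Markov semigroup, and \eqref{UE}) to the whole stated $\gamma$-range via the interpolation and duality machinery already established in Theorems \ref{ThInterpolation} and \ref{ThDuality}, combined with the trivial endpoints $\gamma=0,1$ supplied by Lemma \ref{lemRp1}.

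First, I would invoke \cite{CCFR15} to assert that, under the standing hypotheses, (R$_{p,1/2}$) holds for every $p \in (1,2]$. This is the only ``deep'' input; everything else is soft. Next, for the first half of the corollary, I fix $p \in (1,2)$ and combine (R$_{p,1/2}$) with (R$_{p,0}$), the latter coming from Lemma \ref{lemRp1} (note that doubling implies the local doubling condition assumed there). Applying Theorem \ref{ThInterpolation} with the same integrability exponent $p=q=r$ and varying the smoothness parameter linearly between $0$ and $\tfrac12$ yields (R$_{p,\gamma}$) for every $\gamma \in [0, \tfrac12]$, which is the desired range.

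For the second half, I would first dualize: by Theorem \ref{ThDuality}, (R$_{p',1/2}$) for $p' \in (1,2]$ translates into (RR$_{p,1/2}$) for every $p \in [2,\infty)$. I then combine this with (RR$_{p,1}$), which is furnished again by Lemma \ref{lemRp1} (the pointwise estimate $|\Delta f|\le \sqrt 2\,|\nabla f|$ trivially implying an $L^p$ bound). A second application of Theorem \ref{ThInterpolation}, with integrability exponent fixed at $p\in(2,\infty)$ and the smoothness parameter interpolated between $\tfrac12$ and $1$, produces (RR$_{p,\gamma}$) for every $\gamma \in [\tfrac12,1]$.

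There is no real obstacle here: the only nontrivial ingredient is the black-box result \eqref{Rp} for $p\in(1,2]$ from \cite{CCFR15}, and once it is in hand, the proof is a routine two-step interpolation (once before and once after dualization). The only point to be mildly careful about is the verification that Theorem \ref{ThInterpolation} is applicable with $p=q$ (so that one interpolates only in the smoothness parameter $\gamma$), but this is precisely the degenerate case $\theta$-linear in $\gamma,\delta$ allowed by the statement, so no additional argument is needed.
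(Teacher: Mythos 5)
Your proposal is correct and follows essentially the same route as the paper: the deep input is \eqref{Rp} for $p\in(1,2]$ from \cite{CCFR15}, combined with the endpoints from Lemma \ref{lemRp1}, interpolation (Theorem \ref{ThInterpolation}), and duality (Theorem \ref{ThDuality}). The only cosmetic difference is that the paper dualizes the already-interpolated family (R$_{p,\gamma}$), $\gamma\in[0,\tfrac12]$, directly into (RR$_{p',1-\gamma}$), whereas you dualize only at $\gamma=\tfrac12$ and then interpolate a second time against (RR$_{p,1}$); both are valid.
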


\begin{proof}
The main result of \cite{CCFR15} shows that \eqref{Rp} holds for all $p \in (1,2]$. Lemma \ref{lemRp1} provides (R$_{p,0}$) for all $p\in [1,\infty]$. The interpolation (Theorem \ref{ThInterpolation}) implies that \eqref{Rpgg} holds whenever $p \in (1,2)$ and $\gamma \leq \frac{1}{2}$. Then, by Theorem \ref{ThDuality}, \eqref{RRpgg} is true whenever $p \in (2,\infty)$ and $\gamma \geq \frac{1}{2}$.
\end{proof}

\begin{lemma} \label{lemp=2}
Let $(\Gamma,\mu)$ be an unbounded doubling graph. Then (RR$_{2,\gamma}$) holds if and only if $\gamma \geq \frac{1}{2}$.
\end{lemma}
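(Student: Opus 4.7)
The plan is to reduce $(\text{RR}_{2,\gamma})$ to a pure functional-calculus statement about the bounded, non-negative, self-adjoint operator $\Delta$ on $L^2(\Gamma)$. Using \eqref{E2} we have $\|\nabla f\|_2 = \|\Delta^{1/2} f\|_2$, so $(\text{RR}_{2,\gamma})$ is equivalent to the bound $\|\Delta^\gamma f\|_2 \leq C \|\Delta^{1/2} f\|_2$. Since $\Delta = I - P$ with $\|P\|_{L^2 \to L^2} \leq 1$, the spectrum satisfies $\sigma(\Delta) \subset [0, 2]$, so this is entirely a question about the scalar function $\lambda \mapsto \lambda^{\gamma - 1/2}$ on the spectrum of $\Delta$.

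For $\gamma \geq 1/2$, the scalar inequality $\lambda^\gamma \leq 2^{\gamma - 1/2} \lambda^{1/2}$ on $[0,2]$ gives, by spectral calculus, $\|\Delta^\gamma f\|_2 \leq 2^{\gamma - 1/2} \|\Delta^{1/2} f\|_2$, which establishes $(\text{RR}_{2,\gamma})$.

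For $\gamma < 1/2$, I would argue by contradiction. Granting for the moment that $0 \in \sigma(\Delta)$, pick a sequence $(\lambda_n) \subset \sigma(\Delta)$ with $\lambda_n \to 0^+$. Since the open interval $(\lambda_n/2,\, 2\lambda_n)$ meets the spectrum, the spectral projector $E_n := \1_{(\lambda_n/2,\, 2\lambda_n)}(\Delta)$ is nonzero, and I may pick $f_n \in \operatorname{Range}(E_n) \setminus \{0\}$. Functional calculus then gives
\[
\|\Delta^\gamma f_n\|_2 \geq (\lambda_n/2)^\gamma \|f_n\|_2, \qquad \|\nabla f_n\|_2 = \|\Delta^{1/2} f_n\|_2 \leq (2\lambda_n)^{1/2} \|f_n\|_2,
\]
so the ratio $\|\Delta^\gamma f_n\|_2 / \|\nabla f_n\|_2 \gtrsim \lambda_n^{\gamma - 1/2} \to \infty$ (using $\gamma - 1/2 < 0$), contradicting $(\text{RR}_{2,\gamma})$.

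The main obstacle is therefore to show that $0 \in \sigma(\Delta)$, which I would establish via Weyl's criterion using F\o lner-type tent functions. Fix $x_0 \in \Gamma$, let $R \geq 1$, and set $f_R(x) := \max\{0,\, \min\{1,\, 2 - d(x_0, x)/R\}\}$: this function equals $1$ on $B(x_0, R)$, vanishes outside $B(x_0, 2R)$, and satisfies $|f_R(x) - f_R(y)| \leq 1/R$ whenever $x \sim y$. Direct computation gives
\[
\|f_R\|_2^2 \geq V(x_0, R), \qquad \|\nabla f_R\|_2^2 \leq \frac{V(x_0, 2R+1)}{2 R^2} \lesssim \frac{V(x_0, R)}{R^2},
\]
where the last inequality uses \eqref{D}. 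Combined with the pointwise bound $|\Delta f| \leq \sqrt{2}\, \nabla f$ from \eqref{Rp1a}, this yields $\|\Delta f_R\|_2 / \|f_R\|_2 \lesssim 1/R \to 0$ as $R \to \infty$ (unboundedness of $\Gamma$ lets $R$ grow without trivialising $f_R$), so $(f_R/\|f_R\|_2)_R$ is a Weyl sequence for $\Delta$ at $0$, placing $0$ in $\sigma(\Delta)$ and completing the proof.
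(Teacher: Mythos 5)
Your proof is correct in substance but takes a genuinely different route from the paper in both directions. For $\gamma\geq\frac12$, the paper interpolates between \eqref{E2} and (RR$_{2,1}$) from Lemma \ref{lemRp1}, whereas you use the scalar inequality $\lambda^\gamma\leq 2^{\gamma-1/2}\lambda^{1/2}$ on $\Sp(\Delta)\subset[0,2]$; your argument is more direct and avoids interpolation altogether (at the cost of using that $P$ is an $L^2$-contraction, which is immediate). For $\gamma<\frac12$, the paper plugs the tent functions $f_k$ themselves into (RR$_{2,\gamma}$), asserting that their spectral measure is supported in $[0,C/k]$; you instead separate the two tasks, first exhibiting a Weyl sequence to place $0$ in $\Sp(\Delta)$ and then testing (RR$_{2,\gamma}$) on ranges of spectral projectors $\1_{(\lambda_n/2,2\lambda_n)}(\Delta)$. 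Your version is arguably cleaner at the spectral step, since the bound $\|\Delta f_k\|_2\leq\frac{C}{k}\|f_k\|_2$ does not literally localize the spectral measure of $f_k$ to $[0,C/k]$ as the paper's displayed identity suggests. One step you should justify: knowing $0\in\Sp(\Delta)$ does not by itself give a sequence $\lambda_n\in\Sp(\Delta)$ with $\lambda_n\to 0^+$, since $0$ could a priori be an isolated spectral point. This is easily excluded: an isolated point of the spectrum of a self-adjoint operator is an eigenvalue, and $\Delta f=0$ with $f\in L^2$ forces $\|\nabla f\|_2^2=\langle\Delta f,f\rangle=0$, hence $f$ constant on the connected graph, hence $f\equiv 0$ because \eqref{D} together with unboundedness gives $m(\Gamma)=\infty$. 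With that sentence added, your argument is complete.
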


\begin{proof}
The fact that (RR$_{2,\gamma}$) holds for $\gamma \geq \frac{1}{2}$ follows by interpolating between \eqref{E2} and (RR$_{2,1}$), the latter given by Lemma \ref{lemRp1}.

To show failure when $\gamma < \frac{1}{2}$, we note that $0$ lies in the spectrum of $\Delta$. Since the domain is unbounded, we can construct functions $f_k \in L^2(\Gamma)$ as
\[
f_k(x) := \max\left\{0, 1 - \frac{d(x_0,x)}{k}\right\},
\]
where $x_0 \in \Gamma$ is fixed. These functions approximate the constant function $\mathbf{1}_\Gamma$ in the sense that
\[
|(I - P)f_k(x)| \leq \frac{1}{k} \mathbf{1}_{B(x_0,k+1)}(x),
\]
so
\[
\|\Delta f_k\|_2 = \|(I - P)f_k\|_2 \leq \frac{C}{k} \|f_k\|_2,
\]
for some $C > 0$ depending only on the doubling constant.
Thus, $0 \in \text{Sp}(\Delta)$. Let $dE(\lambda)$ denote the spectral measure. Then
\[
f_k = \int_0^{C/k} dE_{f_k}(\lambda), \quad \text{so} \quad \|\Delta^\alpha f_k\|_2^2 = \int_0^{C/k} \lambda^{2\alpha} \, dE_{f_k,f_k}(\lambda).
\]
Hence, for any $\gamma < \frac{1}{2}$,
\begin{align*}
\|\nabla f_k\|_2^2 &= \|\Delta^{1/2} f_k\|_2^2 = \int_0^{C/k} \lambda \, dE_{f_k,f_k}(\lambda) \\
&\leq (C/k)^{1 - 2\gamma} \int_0^{C/k} \lambda^{2\gamma} \, dE_{f_k,f_k}(\lambda) = (C/k)^{1 - 2\gamma} \|\Delta^\gamma f_k\|_2^2,
\end{align*}
which shows that (RR$_{2,\gamma}$) fails.
\end{proof}

\begin{corollary} \label{Corg<1/p}
Let $(\Gamma,\mu)$ be an unbounded doubling graph such that the Markov semigroup is analytic and satisfies \eqref{UE}. Then \eqref{RRpgg} fails for all $p \in (2,\infty)$ and $\gamma < \frac{1}{p}$.
\end{corollary}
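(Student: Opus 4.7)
The plan is to deduce the $L^p$ failure for $p > 2$ from the $L^2$ obstruction already established in Lemma \ref{lemp=2} by an interpolation argument. I would proceed by contradiction: if (RR$_{p,\gamma}$) were to hold for some $p \in (2, \infty)$ and some $\gamma < 1/p$, then interpolating it against the endpoint estimate (RR$_{1,1}$) provided by Lemma \ref{lemRp1} should yield an (RR$_{2, \tilde\gamma}$) estimate with $\tilde\gamma$ strictly less than $1/2$, directly contradicting Lemma \ref{lemp=2}.

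The key bookkeeping step is the choice of the interpolation parameter. Given $p > 2$, pick $\theta \in (0,1)$ so that $\frac{1}{2} = \frac{\theta}{p} + (1-\theta)$, which forces $\theta = \frac{p}{2(p-1)}$; note that $\theta \in (0,1)$ precisely because $p > 2$. Theorem \ref{ThInterpolation} then produces the interpolated exponent
\[
\tilde\gamma \;=\; \theta \gamma + (1-\theta) \;=\; \frac{p\gamma + p - 2}{2(p-1)},
\]
and a direct calculation gives $\tilde\gamma < \tfrac{1}{2}$ if and only if $p\gamma < 1$, that is, $\gamma < \tfrac{1}{p}$. So the hypothesis $\gamma < 1/p$ is exactly what is needed to push the interpolated exponent below the $L^2$ threshold of Lemma \ref{lemp=2}.

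I do not anticipate any real obstacle here: the whole argument reduces to verifying admissibility of the interpolation parameters, i.e.\ $\theta \in (0,1)$ and $\tilde\gamma \in [0,1]$, both of which are immediate from $p > 2$ and $\gamma \in [0, 1/p) \subset [0, 1/2)$. In particular, it is not necessary to reopen the explicit test-function construction of Lemma \ref{lemp=2}; the cutoffs $f_k(x) = \max\{0, 1 - d(x_0,x)/k\}$ already encode the spectral obstruction at $p = 2$, and interpolation against the trivial bound (RR$_{1,1}$) is enough to transfer that obstruction to every $p \in (2, \infty)$ up to the expected threshold $\gamma = 1/p$.
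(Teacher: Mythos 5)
Your proposal is correct and follows exactly the paper's own argument: interpolate the hypothetical (RR$_{p,\gamma}$) against (RR$_{1,1}$) from Lemma \ref{lemRp1} to produce (RR$_{2,\tilde\gamma}$) with $\tilde\gamma<\tfrac12$, contradicting Lemma \ref{lemp=2}. Your explicit computation of $\theta=\tfrac{p}{2(p-1)}$ and the equivalence $\tilde\gamma<\tfrac12\iff\gamma<\tfrac1p$ is accurate and merely spells out the bookkeeping the paper leaves implicit.
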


\begin{proof}
Assume for contradiction that (RR$_{p,\gamma}$) holds for some $p \in (2,\infty)$ and $\gamma < \frac{1}{p}$. Then interpolation with (RR$_{1,1}$) - provided by Lemma \ref{lemRp1} - gives that (RR$_{2,\delta}$) holds for some $\delta < \frac{1}{2}$, contradicting Lemma \ref{lemp=2}.
\end{proof}

\subsection{Further results} 
\label{SsFG}

\begin{proposition} \label{prUE=>aFK}
Let $(\Gamma,\mu)$ be an unbounded doubling graph such that the Markov semigroup is analytic and satisfies \eqref{UE}. Then for any \( p \in (1,\infty) \) and any \( \alpha \geq 0 \), there exists a constant \( C = C(\alpha, p) > 0 \) such that for any ball \( B := B(x, r) \subset \Gamma \) and any function \( f \in L^p(\Gamma, m) \) supported in \( B \), we have
\begin{equation} \label{UE=>aFK}
\|f\|_{L^p} \leq C r^{\alpha\beta} \|\Delta^{\alpha} f\|_{L^p}.
\end{equation}
\end{proposition}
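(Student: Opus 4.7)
The strategy is to use the sub-Gaussian estimate \eqref{UE} to show that $\|P^k f\|_p \leq \tfrac12\|f\|_p$ when $f$ is supported in $B := B(x_0,r)$ and $k$ is of the order of the escape time $r^\beta$, and then to invoke the $L^p$-analyticity of the semigroup (Proposition \ref{PrLB}) to bound $\|(I - P^k)f\|_p$ by $k^\alpha \|\Delta^\alpha f\|_p$. Together these yield $\|f\|_p \leq \|(I - P^k)f\|_p + \|P^k f\|_p \leq 2\|(I - P^k)f\|_p \lesssim r^{\alpha\beta}\|\Delta^\alpha f\|_p$.

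For the first step, the bound \eqref{UE} applied to $f$ supported in $B$ gives
\[
|P^k f(y)| \leq \frac{C\,\phi_k(d(y,B))}{V(y, k^{1/\beta})}\,\|f\|_1,\qquad \phi_k(s) := \exp\left(-c[s^\beta/k]^{1/(\beta-1)}\right).
\]
Combining this with H\"older's inequality $\|f\|_1 \leq V(x_0,r)^{1/p'}\|f\|_p$, splitting $\Gamma$ into dyadic annuli around $x_0$ of radii $2^i k^{1/\beta}$, and applying the doubling property (the super-exponential decay of $\phi_k$ absorbs the polynomial volume growth in $i$), one obtains
\[
\|P^k f\|_p^p \lesssim \left(\frac{V(x_0,r)}{V(x_0, k^{1/\beta})}\right)^{p-1}\|f\|_p^p.
\]
The lower polynomial bound \eqref{defdelta} then implies that choosing $k = \lceil N r^\beta\rceil$ with $N$ sufficiently large (depending only on $p$, $\delta$, $\beta$) forces $\|P^k f\|_p \leq \tfrac12\|f\|_p$.

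For the second step with $\alpha \in (0,1]$, the telescoping identity $I - P^k = \Delta\sum_{j=0}^{k-1} P^j$ gives $(I - P^k)\Delta^{-\alpha} = \sum_{j=0}^{k-1} \Delta^{1-\alpha}P^j$. Proposition \ref{PrLB}(ii) yields $\|\Delta P^j\|_{L^p\to L^p} \lesssim (j+1)^{-1}$, and combined with $\|P^j\|_{L^p\to L^p}\leq 1$, the moment inequality for fractional powers of sectorial operators produces $\|\Delta^{1-\alpha}P^j\|_{L^p\to L^p} \lesssim (j+1)^{-(1-\alpha)}$; summing gives $\|(I - P^k)\Delta^{-\alpha}\|_{L^p\to L^p} \leq C_\alpha k^\alpha$, which combined with the first step proves \eqref{UE=>aFK} for $\alpha \in (0,1]$. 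For $\alpha > 1$, write $\alpha = n + \alpha_0$ with $n \in \mathbb{N}$ and $\alpha_0 \in (0,1]$: since $\Delta^i f$ is supported in $B(x_0, r+i)$, iterating the $\alpha = 1$ case $n$ times yields $\|f\|_p \lesssim r^{n\beta}\|\Delta^n f\|_p$, and applying the $\alpha_0$ case to $\Delta^n f$ finishes the bound (the enlargements of the ball are harmless since $r \geq 1$).

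The core difficulty lies in the second step: the estimate $\|\Delta^{1-\alpha}P^j\|_{L^p\to L^p} \lesssim (j+1)^{-(1-\alpha)}$ is standard functional calculus for generators of analytic semigroups, but it requires the full $L^p$-analyticity provided by Proposition \ref{PrLB}(ii), not merely the $L^2$-analyticity appearing in the definition. The first step, while notationally heavy, is a routine heat-kernel computation in the sub-Gaussian setting.
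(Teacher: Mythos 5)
Your proof follows essentially the same route as the paper's: Step 1 (forcing $\|P^{k}f\|_p\le\tfrac12\|f\|_p$ for $k\approx\lambda^\beta r^\beta$ via \eqref{UE}, H\"older, dyadic annuli, doubling and the volume lower bound \eqref{defdelta}) and Step 2 (the telescoping identity $f=P^{k+1}f+\sum_{\ell=0}^{k}\Delta^{1-\alpha}P^{\ell}\Delta^{\alpha}f$ combined with the analyticity bound $\|\Delta^{1-\alpha}P^{\ell}\|_{L^p\to L^p}\lesssim(\ell+1)^{\alpha-1}$, which the paper quotes from \cite[Proposition 2]{CSC90}) are exactly the paper's two steps. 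The only genuine difference is the case $\alpha>1$: the paper re-iterates the reproducing formula to produce multiple sums of $\Delta^{N}P^{\ell_1+\cdots+\ell_N}f$, whereas you compose the integer-order inequality $n$ times using that $\Delta^{i}f$ remains supported in $B(x_0,r+i)$ and then apply the fractional case to $\Delta^{n}f$ --- a slightly cleaner but equivalent reduction.
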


\begin{remark} \label{RmkRR=>P}
Thanks to Proposition \ref{prUE=>aFK}, we obtain the implication
\[
\text{ \eqref{RRpgg}} \implies \text{ (P$_{p,\beta \gamma}$) }.
\]
\end{remark}

\bp 
We begin with the case \( \alpha \in [0,1) \), which is simpler and will be the only one needed later in our arguments. The more technical case \( \alpha \geq 1 \) will be treated afterward.

\medskip

\noindent We may assume without loss of generality that \( f \not\equiv 0 \), since the result is trivial otherwise. Set \( k = \lambda^\beta r^\beta \) for some \( \lambda \geq 1 \) to be chosen later. Then we decompose \( f \) as
\begin{equation} \label{UE->FK0}
f = P^{k+1} f + \sum_{\ell = 0}^k \Delta P^\ell f.
\end{equation}

\medskip

\noindent \textbf{Step 1.} Estimating the term \( P^{k+1} f \). We claim that for a suitable choice of \( \lambda \), we have
\begin{equation} \label{claim1}
\|P^{k+1} f\|_p \leq \frac{1}{2} \|f\|_p.
\end{equation}
Indeed,
\begin{multline} \label{UE->FK2}
\|P^{k+1} f\|_{L^p} := \left( \sum_{x\in \Gamma } \left| \sum_{y\in \Gamma} p_{k+1}(x,y) f(y) m(y) \right| ^p m(x)\right)^{\frac{1}{p}} \\
\leq \left( \sum_{x\in \Gamma} \sup_{y\in B} |p_{k+1}(x,y)|^p m(x) \right)^{\frac{1}{p}} \|f\|_{L^1}.
\end{multline}
Using the doubling property and Hölder's inequality, we get
\begin{equation} \label{UE->FK1}
\|f\|_{L^1} \leq m(B)^{1 - \frac{1}{p}} \|f\|_p.
\end{equation}
Let \( C_0(\lambda B) := 2\lambda B \), and for \( j \geq 1 \), let \( C_j(\lambda B) := 2^{j+1}\lambda B \setminus 2^{j}\lambda B \). For \( x \in C_j(\lambda B) \) and \( y \in B \), we have \( d(x, y)^\beta \gtrsim (2^{j\beta} - 1)(k+1) \), hence by \eqref{UE},
\[
\sup_{y \in B} |p_{k+1}(x,y)| \lesssim \frac{1}{m(\lambda B)} e^{-c2^{j\beta/(\beta-1)}}.
\]
Thus,
\begin{multline*}
\sum_{x \in \Gamma} \sup_{y \in B} |p_{k+1}(x,y)|^p m(x) \lesssim \sum_{j \in \mathbb{N}} m(2^{j+1} \lambda B) \left( \frac{e^{-c2^{j\beta/(\beta-1)}}}{m(\lambda B)} \right)^p \\
\lesssim \sum_{j \in \mathbb{N}} 2^{(j+1)D} m(\lambda B) \left( \frac{e^{-c2^{j\beta/(\beta-1)}}}{m(\lambda B)} \right)^p 
\lesssim \frac{1}{m(\lambda B)^{p-1}} \lesssim \frac{\lambda^{-(p-1)\delta}}{m(B)^{p-1}}.
\end{multline*}
Choosing \( \lambda \) sufficiently large (independently of \( x \) or \( r \)), we get
\[
\left( \sum_{x \in \Gamma} \sup_{y \in B} |p_{k+1}(x,y)|^p m(x) \right)^{\frac{1}{p}} \leq \frac{1}{2} m(B)^{\frac{1}{p} - 1} \frac{\|f\|_p}{\|f\|_1} \leq \frac{1}{2}  \frac{\|f\|_p}{\|f\|_1}.
\]
Injecting this into \eqref{UE->FK2}, we obtain \eqref{claim1}.

\medskip

\noindent \textbf{Step 2.} The remainder term in \eqref{UE->FK0} can be rewritten as
\[
\sum_{\ell = 0}^k \Delta P^\ell f = \sum_{\ell = 0}^{k} \Delta^{1-\alpha} P^\ell [\Delta^\alpha f].
\]
Since \( (P^\ell)_{\ell \in \mathbb{N}} \) is an analytic Markov semigroup on \( L^p \), we apply \cite[Proposition 2]{CSC90} to get
\[
\|\Delta^{1-\alpha} P^\ell g\|_{L^p} \leq C (\ell+1)^{\alpha - 1} \|g\|_{L^p}, \quad \forall g \in L^p.
\]
Hence,
\[
\left\| \sum_{\ell = 0}^k \Delta P^\ell f \right\|_{L^p} \lesssim \sum_{\ell = 0}^k (\ell+1)^{\alpha - 1} \|\Delta^\alpha f\|_{L^p} \lesssim (k+1)^{\alpha} \|\Delta^\alpha f\|_{L^p} \lesssim r^{\alpha\beta} \|\Delta^\alpha f\|_{L^p}.
\]

\medskip

\noindent \textbf{Step 3.} Combining the estimates from Steps 1 and 2 into \eqref{UE->FK0}, we get
\[
\|f\|_{L^p(\Gamma)} \leq \frac{1}{2} \|f\|_{L^p(\Gamma)} + C r^{\alpha\beta} \|\Delta^\alpha f\|_{L^p(\Gamma)}.
\]
By moving all the $\|f\|_p$ on the left-hand size, we get the desired estimate \eqref{UE=>aFK} for \( \alpha \in [0,1) \).

\medskip

\noindent \textbf{Step 4. Case \( \alpha \in [1,2) \).} The strategy is the same, but we need a suitable identity like \eqref{UE->FK0}. Namely,
\begin{equation} \label{UE->FK00}
f = P^{k+1} f + \sum_{\ell = 0}^k \Delta P^\ell f = P^{k+1} f + \sum_{\ell = 0}^k \Delta P^\ell \left( P^{k+1} f + \sum_{n=0}^k \Delta P^n f \right).
\end{equation}
So we decompose
\[
f = \underbrace{P^{k+1} f + \sum_{\ell = 0}^k \Delta P^{\ell+k+1} f}_{I_1(f): \text{ treated as in Step 1}} + \underbrace{\sum_{\ell = 0}^k \sum_{n = 0}^k \Delta^2 P^{\ell+n} f}_{I_2(f): \text{ treated as in Step 2}}.
\]
Using the $L^p$ boundedness of \( P \), we have
\[
\|I_1(f)\|_{L^p} \lesssim \|P^{k+1} f\|_{L^p} \leq \frac{1}{2} \|f\|_{L^p}.
\]
if $\lambda$ is larege enough. For \( I_2(f) \), the analyticity yields
\[
\|I_2(f)\|_{L^p} \lesssim \sum_{\ell = 0}^k \sum_{n = 0}^k (\ell + n + 1)^{\alpha - 2} \|\Delta^\alpha f\|_{L^p} \lesssim (k+1)^\alpha \|\Delta^\alpha f\|_{L^p}.
\]
Thus,
\[
\|f\|_{L^p} \leq \frac{1}{2} \|f\|_{L^p} + C r^{\alpha\beta} \|\Delta^\alpha f\|_{L^p},
\]
establishing \eqref{UE=>aFK} when \( \alpha \in [1,2) \).

\medskip

\noindent \textbf{Step 5. General case \( \alpha > 0 \).} Let \( N \in \mathbb{N} \) be the smallest integer strictly greater than \( \alpha \). One can iterate the decomposition in \eqref{UE->FK00} \( N \) times to obtain an identity where every term is either a linear combination of \( P^{k+1} f \) or \( \Delta^N f \). A similar reasoning as above completes the proof.
\ep

\begin{corollary} \label{Corg<1/b}
Let $(\Gamma,\mu)$ be an unbounded doubling graph such that the Markov semigroup is analytic and satisfies \eqref{UE}. Then \eqref{RRpgg} fails whenever $p \in (1,\infty)$ and $\gamma \in \left[0, \frac{1}{\beta}\right)$.
\end{corollary}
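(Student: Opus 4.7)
The plan is to argue by contradiction, combining Proposition~\ref{prUE=>aFK} (which converts (RR$_{p,\gamma}$) into a Poincaré-type inequality via Remark~\ref{RmkRR=>P}) with a direct test-function computation against tent functions on large balls.

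Suppose (RR$_{p,\gamma}$) holds for some $p \in (1,\infty)$ and some $\gamma < 1/\beta$. By Remark~\ref{RmkRR=>P}, the Poincaré-type inequality (P$_{p,\beta\gamma}$) holds: every $f \in L^p(\Gamma)$ supported in a ball $B = B(x_0, r)$ satisfies $\|f\|_{L^p(B)} \leq C r^{\beta\gamma} \|\nabla f\|_{L^p(B)}$. Since $\gamma < 1/\beta$, we have $\alpha := \beta\gamma < 1$, and it suffices to show that (P$_{p,\alpha}$) fails whenever $\alpha < 1$.

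To exhibit the contradiction, I would fix a point $x_0 \in \Gamma$ and, for each (large) integer $r \geq 1$, define the standard tent function
\[
f_r(x) := \max\!\left\{0,\ 1 - \tfrac{d(x_0,x)}{r}\right\},
\]
which is supported in $B_r := B(x_0, r)$. On the one hand, since $f_r \geq \tfrac{1}{2}$ on $B(x_0, r/2)$, the doubling property \eqref{D} gives
\[
\|f_r\|_{L^p(\Gamma)}^p \geq \tfrac{1}{2^p} V(x_0, r/2) \gtrsim V(x_0, r).
\]
On the other hand, whenever $x \sim y$ one has $|f_r(y) - f_r(x)| \leq d(x,y)/r \leq 1/r$, so
\[
|\nabla f_r(x)|^2 = \tfrac{1}{2}\sum_{y\sim x} p(x,y)|f_r(y)-f_r(x)|^2 m(y) \leq \tfrac{1}{2r^2},
\]
and since $\nabla f_r$ is supported in a bounded enlargement of $B_r$, doubling gives $\|\nabla f_r\|_{L^p(\Gamma)}^p \lesssim r^{-p} V(x_0,r)$.

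Substituting these estimates into (P$_{p,\alpha}$) applied to $f_r$ would force
\[
V(x_0,r)^{1/p} \lesssim r^{\alpha}\cdot r^{-1} V(x_0,r)^{1/p}, \qquad \text{i.e.}\qquad r^{1-\alpha} \lesssim 1,
\]
uniformly in $r$. Because $\alpha = \beta\gamma < 1$ and $\Gamma$ is unbounded (so $r$ may be taken arbitrarily large), this is impossible. This contradicts the assumed validity of (RR$_{p,\gamma}$) and finishes the proof. The only substantive step is invoking Proposition~\ref{prUE=>aFK} to get the Poincaré inequality; the test-function calculation with tent functions is a routine verification once that reduction is in place.
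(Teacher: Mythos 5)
Your proof is correct and follows essentially the same route as the paper: reduce to the Poincaré inequality (P$_{p,\beta\gamma}$) via Remark~\ref{RmkRR=>P}, then test it against tent functions on large balls, your $f_r$ being just the paper's test function $\max\{0, r - d(x_0,x)\}$ rescaled by $1/r$. The resulting estimates and the conclusion $r^{1-\beta\gamma}\lesssim 1$ are identical.
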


\bp
We argue by contraposition. Suppose that \eqref{RRpgg} holds, and we aim to prove that $\gamma \geq \frac{1}{\beta}$. By Remark \ref{RmkRR=>P}, this implies that (P$_{\beta,\gamma}$) holds, i.e., there exists $C > 0$ such that for every ball $B := B(x,r)$ and any function $f$ supported in $B$, we have:
\begin{equation} \label{Pbg}
\|f\|_{L^p(\Gamma)} \leq C r^{\beta\gamma} \|\nabla f\|_{L^p(\Gamma)}.
\end{equation}
Fix a point $x_0 \in \Gamma$, and for $r \in \mathbb{N} \setminus \{0\}$, define the function
\[ f_r(x) := \max\{0, r - \dist(x_0, x)\}. \]
Observe the following:
\begin{itemize}
\item $f_r$ is supported in $B(x_0,r)$;
\item $|f_r(x) - f_r(y)| \leq 1$ whenever $x \sim y$, which implies $\nabla f_r \leq 1$. Since $\nabla f_r$ is supported in $B(x_0,r)$, we have
\[ \|\nabla f_r\|_{L^p} \leq V(x_0, r)^{1/p}; \]
\item $f_r \geq r/2$ on $B(x_0, r/2)$, so
\[ \|f_r\|_{L^p} \geq \frac{r}{2} V(x_0, r/2)^{1/p}. \]
\end{itemize}
Inserting these bounds into \eqref{Pbg} yields:
\[ \frac{r}{2} V(x_0, r/2)^{1/p} \leq C r^{\beta\gamma} V(x_0, r)^{1/p}, \]
which simplifies to:
\[ r^{1 - \beta\gamma} \leq 2C \left( \frac{V(x_0, r)}{V(x_0, r/2)} \right)^{1/p} \lesssim 1 \]
by the doubling property \eqref{D}. Since this inequality must hold for all integers $r \geq 1$, it forces the exponent $1 - \beta\gamma \leq 0$, i.e., $\gamma \geq \frac{1}{\beta}$. The corollary follows.
\ep

\begin{lemma} \label{lemPp=2}
Let $(\Gamma,\mu)$ be an unbounded doubling graph such that the Markov semigroup is analytic and satisfies \eqref{UE}. Then the inequality (P$_{2,\gamma\beta}$) fails for any $\gamma < \frac{1}{2}$.
\end{lemma}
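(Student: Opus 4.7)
We argue by contradiction: suppose (P$_{2,\gamma\beta}$) holds for some $\gamma < 1/2$ and construct test functions that violate it at large scales. Fix $x_0 \in \Gamma$, and for each large integer $r$ set $k_r := \lfloor r^\beta \rfloor$ and $f_r(x) := p_{k_r}(x_0,x)$. Since $f_r = P^{k_r}(\delta_{x_0}/m(x_0))$, reversibility gives the exact identities $\|f_r\|_2^2 = p_{2k_r}(x_0,x_0)$ and $\|\nabla f_r\|_2^2 = \|\Delta^{1/2} f_r\|_2^2 = p_{2k_r}(x_0,x_0) - p_{2k_r+1}(x_0,x_0)$, so sharp heat kernel bounds translate directly into sharp bounds on $f_r$.

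The $L^2$ lower bound comes from a standard near-diagonal argument: using conservation of mass $\sum_y p_{k_r}(x_0,y)m(y) = 1$, the sub-Gaussian tail in \eqref{UE}, and Cauchy--Schwarz (in the spirit of Step 1 of Proposition \ref{prUE=>aFK}), we get $\|f_r\|_2^2 = p_{2k_r}(x_0,x_0) \gtrsim 1/V(x_0,r)$. The gradient upper bound comes from the time-derivative estimate \eqref{dkUE}: $\|\nabla f_r\|_2^2 \lesssim 1/(k_r V(x_0,r)) \lesssim 1/(r^\beta V(x_0,r))$. Thus $f_r$ has Rayleigh quotient $\lesssim 1/r^\beta$, the size expected of a Dirichlet eigenfunction on $B(x_0,r)$, but it is only essentially, not exactly, supported there.

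To localize, choose $A = A(r)$ with $A^{\beta/(\beta-1)} = A_0 \log r$ for a large constant $A_0$, and let $\chi$ be a Lipschitz cutoff equal to $1$ on $B(x_0,Ar)$, vanishing outside $B(x_0,2Ar)$, with $\|\nabla \chi\|_\infty \leq 1/(Ar)$. Set $\tilde f_r := \chi f_r$, supported in $B_r := B(x_0,2Ar)$. An annular decomposition using \eqref{UE} and doubling gives $\|f_r \mathbf{1}_{B(x_0,Ar)^c}\|_2^2 \lesssim A^D \exp(-cA^{\beta/(\beta-1)})/V(x_0,r)$, which for our choice is $o(\|f_r\|_2^2)$, so $\|\tilde f_r\|_2^2 \gtrsim 1/V(x_0,r)$. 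The discrete Leibniz inequality then yields
\[
\|\nabla \tilde f_r\|_2^2 \lesssim \|\nabla f_r\|_2^2 + \|f_r\nabla\chi\|_2^2 \lesssim \frac{1}{r^\beta V(x_0,r)} + \frac{A^{D-2} e^{-cA^{\beta/(\beta-1)}}}{r^2 V(x_0,r)} \lesssim \frac{(\log r)^{C_1}}{r^\beta V(x_0,r)}
\]
for $A_0$ large enough. Applying the assumed (P$_{2,\gamma\beta}$) to $\tilde f_r$ on $B_r$ and simplifying, we find $r^{\beta(1/2 - \gamma)} \lesssim (\log r)^{C_2}$, impossible for $\gamma < 1/2$ as $r \to \infty$.

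The main obstacle is the localization step. With a cutoff of constant radius, $\|f_r\nabla\chi\|_2^2 \approx 1/(r^2 V(x_0,r))$ dominates the intrinsic gradient $\lesssim 1/(r^\beta V(x_0,r))$ whenever $\beta > 2$, and one only obtains the weaker failure threshold $\gamma < 1/\beta$ (matching what Corollary \ref{Corg<1/b} already yields). The improvement to $\gamma < 1/2$ hinges on letting the cutoff radius grow like $(\log r)^{(\beta-1)/\beta}$, so that the sub-Gaussian tail $\exp(-cA^{\beta/(\beta-1)})$ of the heat kernel drives the cutoff error down to the intrinsic gradient scale, up to a harmless polylogarithmic loss.
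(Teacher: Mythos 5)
Your proof is correct and follows the same underlying strategy as the paper's: take the heat kernel at time $k\approx r^\beta$ as the test function, establish $\|f\|_2^2\gtrsim 1/V(x_0,r)$ by the on-diagonal lower bound and $\|\nabla f\|_2^2\lesssim \frac{1}{k}\|f\|_2^2$ via the time-regularity estimate \eqref{dkUE}, and observe that the function can be localized to a ball of radius $\approx r(\log r)^{(\beta-1)/\beta}$ so that the logarithmic inflation of the Poincar\'e radius is harmless when $\gamma<\frac12$ strictly. The only genuine difference is the localization device. The paper truncates \emph{vertically}: it sets $f_k=\max\{0,\,p_k(\cdot,y)-\frac{1}{kV(y,k^{1/\beta})}\}$, which is automatically supported in $B(y,r_k)$ with $r_k\approx k^{1/\beta}(\log k)^{(\beta-1)/\beta}$ because the sub-Gaussian bound forces $p_k$ below the threshold outside that ball; the gradient bound then follows from the pointwise estimate $|\Delta f_k|\lesssim \frac{1}{kV}$ combined with $\sum_x p_k(x,y)m(x)=1$, with no product-rule term to control. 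You truncate \emph{horizontally} with a Lipschitz cutoff at radius $Ar$, $A^{\beta/(\beta-1)}\approx A_0\log r$, which forces you to pay for the Leibniz term $\|f_r\nabla\chi\|_2^2$; your observation that the sub-Gaussian tail $e^{-cA^{\beta/(\beta-1)}}=r^{-cA_0}$ beats the deficit $r^{\beta-2}$ between the cutoff scale $1/(Ar)^2$ and the intrinsic scale $1/r^\beta$ is exactly the right point, and your closing remark explaining why a constant-radius cutoff only recovers the weaker threshold $\gamma<1/\beta$ correctly identifies where the difficulty lives. Both routes end at the same contradiction $r^{\beta(1/2-\gamma)}\lesssim(\log r)^{C}$; the paper's vertical truncation is marginally cleaner in that it needs no Leibniz estimate, while yours avoids having to invert the sub-Gaussian bound to locate the support.
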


\bp
\textbf{Step 1:} We claim there exists $\Lambda \geq 1$ (independent of $y \in \Gamma$ and $k \in \mathbb{N}^*$) such that
\[
\sum_{x \in \Gamma \setminus B(y, \Lambda k^{1/\beta})} p_k(x,y) m(x) \leq \frac{1}{2}.
\]
Indeed, define $C_j(y,r) := B(y,2^j r) \setminus B(y,2^{j-1} r)$. Then:
\begin{align*}
\sum_{x \in \Gamma \setminus B(y, \Lambda k^{1/\beta})} p_k(x,y) m(x)
&= \sum_{j \in \mathbb{N}^*} \sum_{x \in C_j(y, \Lambda k^{1/\beta})} p_k(x,y) m(x) \\
&\lesssim \sum_{j \in \mathbb{N}^*} \frac{V(y, 2^j \Lambda k^{1/\beta})}{V(y, k^{1/\beta})} \exp\left(-c (2^{j-1}\Lambda)^{\frac{\beta}{\beta - 1}}\right) \\
&\lesssim \exp\left(-\frac{c \Lambda^{\frac{\beta}{\beta - 1}}}{2}\right) \sum_{j \in \mathbb{N}^*} (2^j \Lambda)^D \exp\left(-\frac{c (2^{j-1} \Lambda)^{\frac{\beta}{\beta - 1}}}{2} \right) \\
&\lesssim \exp\left(-\frac{c \Lambda^{\frac{\beta}{\beta - 1}}}{2} \right),
\end{align*}
where we used \eqref{UE} and the doubling property \eqref{defD}. The claim follows by choosing $\Lambda$ large enough.

\medskip

\noindent \textbf{Step 2:} There exists $\eta > 0$ such that the set
\[
S := \left\{ x \in B(y, \Lambda k^{1/\beta})\, , \,  p_k(x,y) \geq \frac{1}{4 V(y, \Lambda k^{1/\beta})} \right\}
\]
satisfies $m(S) \geq \eta V(y, k^{1/\beta})$. Indeed, for $x \in S$, by \eqref{UE}, $p_k(x,y) \leq C / V(y, k^{1/\beta})$. Combining with Step 1, we get:
\[
\frac{1}{2} \leq \sum_{x \in B(y, \Lambda k^{1/\beta})} p_k(x,y) m(x)
\leq \frac{1}{4V(y,\Lambda k^{1/\beta}} V(y,\Lambda k^{1/\beta} + C\frac{m(S)}{V(y, k^{1/\beta})},
\]
which gives $m(S) \geq \frac{1}{4C} V(y, k^{1/\beta})$.

\medskip

\noindent \textbf{Step 3:} Fix $y \in \Gamma$ and define:
\[
f_k(x) := \max\left\{ 0, p_k(x,y) - \frac{1}{k V(y, k^{1/\beta})} \right\}.
\]
By doubling, the ratio $V(y, \Lambda k^{1/\beta}) / V(y, k^{1/\beta})$ is uniformly bounded by some constant $k_0$, and for $k \geq k_0$, Step 2 implies:
\begin{equation} \label{lbonfk}
\|f_k\|_2^2 \gtrsim \frac{1}{V(y, k^{1/\beta})}.
\end{equation}
Also,
\begin{equation} \label{bdgk}
0 \leq f_k(x) - p_k(x,y) \leq \frac{1}{k V(y, k^{1/\beta})}.
\end{equation}
Thus, for $x \in \Gamma$:
\[
|\Delta f_k(x)| \leq |p_k(x,y) - p_{k+1}(x,y)| + |\Delta(f_k - p_k(\cdot,y))(x)| \lesssim \frac{1}{k V(y, k^{1/\beta})},
\]
where we use \eqref{dkUE} and the $L^\infty$-boundedness of $\Delta$. Therefore:
\begin{equation} \label{gradbound}
\|\nabla f_k\|_2^2 = \sum_{x \in \Gamma} [\Delta f_k](x) f_k(x) m(x) \lesssim \frac1{kV(y,k^{1/\beta})} \underbrace{\sum_{x\in \Gamma} p_k(x,y) m(x)}_{=1} \lesssim \frac{1}{k} \|f_k\|_2^2.
\end{equation}

Now observe that $f_k(x) = 0$ whenever
\[
p_k(x,y) \leq \frac{1}{k V(y, k^{1/\beta})} \leq C \frac{1}{V(y, k^{1/\beta})} \exp\left(- c \left( \frac{d(x,y)^\beta}{k} \right)^{\frac{1}{\beta-1}} \right),
\]
so
\[
d(x,y) \geq r_k := k^{1/\beta} \left( \frac{\log(Ck)}{c} \right)^{\frac{\beta-1}{\beta}}.
\]
Thus, $f_k$ is supported in $B(y, r_k)$ with $r_k \to \infty$, and
\[
\|\nabla f_k\|_2^2 \lesssim \frac{1}{k} \|f_k\|_2^2 \lesssim \frac{\log(r_k)^{\beta - 1}}{r_k^\beta} \|f_k\|_2^2 \lesssim \frac{1}{r_k^{2\gamma\beta + \epsilon}} \|f_k\|_2^2,
\]
whenever $\gamma < \frac{1}{2}$ and $0 < \epsilon < (1 - 2\gamma)\beta$. Hence, the Poincaré inequality (P$_{2,\gamma\beta}$) cannot hold if $\gamma < \frac{1}{2}$, completing the proof.
\ep

\begin{corollary} \label{Corg<L(b,p)}
Let $(\Gamma,\mu)$ be an unbounded doubling graph with non-degenerate probability transitions, such that the Markov semigroup is analytic and satisfies \eqref{UE}. Then \eqref{RRpgg} fails whenever $p \in (1,2)$ and 
\[
\gamma < \frac{1}{\beta} + \frac{1}{p} \left(1 - \frac{2}{\beta}\right).
\]
\end{corollary}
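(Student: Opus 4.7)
The plan is to argue by contraposition: assuming \eqref{RRpgg} holds for some $p \in (1,2)$ and some exponent $\gamma_0$, I will show that $\gamma_0 \geq \frac{1}{\beta} + \frac{1}{p}(1 - \frac{2}{\beta})$. The strategy parallels Corollary \ref{Corg<1/p}, but routes through Poincaré inequalities rather than through reverse Riesz directly: convert \eqref{RRpgg} to a Poincaré inequality using Remark \ref{RmkRR=>P}, interpolate with a suitable endpoint to reach $r = 2$, and derive a contradiction with Lemma \ref{lemPp=2}.

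The first input is immediate: Remark \ref{RmkRR=>P} gives (P$_{p,\beta\gamma_0}$). To interpolate this via Theorem \ref{ThInterpolation} and land at the exponent $r = 2$, I need a second Poincaré-type estimate at a different integrability exponent. The natural choice is $p = \infty$, and the key step of the proof is establishing the endpoint
\[
\|f\|_{L^\infty} \leq C r \|\nabla f\|_{L^\infty} \qquad \text{for every } f \text{ supported in } B(x_0,r),
\]
i.e.\ (P$_{\infty,1}$). This is exactly where the non-degenerate transitions hypothesis \eqref{defLBE} enters: \eqref{defLBE} forces $|f(x)-f(y)| \leq \sqrt{2/\varepsilon}\,\nabla f(x)$ for every pair of neighbors $x\sim y$, because $\nabla f(x)^2$ dominates a single term $\tfrac{1}{2} p(x,y)|f(x)-f(y)|^2 m(y) \geq \tfrac{\varepsilon}{2}|f(x)-f(y)|^2$. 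Then, given $y \in B(x_0,r)$, one can join $y$ to a vertex outside $B(x_0,r)$ where $f$ vanishes via a graph path of length at most $2r+1$ (go to $x_0$ in at most $r$ steps, then walk outward $r+1$ steps), and summing the elementary increments along that path yields (P$_{\infty,1}$).

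With both endpoints in hand, the rest is arithmetic. Applying Theorem \ref{ThInterpolation} to (P$_{p,\beta\gamma_0}$) and (P$_{\infty,1}$) with $\theta = p/2 \in (0,1)$ gives $\frac{1}{r}=\frac{\theta}{p}+\frac{1-\theta}{\infty}=\frac{1}{2}$, hence (P$_{2,\beta\epsilon}$) holds with
\[
\epsilon = \theta \gamma_0 + (1-\theta)\frac{1}{\beta} = \frac{p}{2}\gamma_0 + \left(1-\frac{p}{2}\right)\frac{1}{\beta}.
\]
Lemma \ref{lemPp=2} rules out $\epsilon < \frac{1}{2}$, so one must have $\epsilon \geq \frac{1}{2}$, which rearranges to $\gamma_0 \geq \frac{1}{\beta}+\frac{1}{p}(1-\frac{2}{\beta})$, and the contrapositive is the claim. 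The only non-routine ingredient is the endpoint (P$_{\infty,1}$); once that is in place, the remainder is a mechanical assembly of results already in the paper, and this is also precisely the reason the statement requires the non-degenerate transitions hypothesis, which is absent from the earlier corollaries of this subsection.
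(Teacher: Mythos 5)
Your proposal follows exactly the paper's own argument: establish (P$_{\infty,1}$) from connectedness plus the non-degeneracy \eqref{defLBE}, convert \eqref{RRpgg} into (P$_{p,\beta\gamma}$) via Remark \ref{RmkRR=>P}, interpolate with $\theta = p/2$ to reach (P$_{2,\beta\epsilon}$), and invoke Lemma \ref{lemPp=2}; the arithmetic for the threshold matches as well. The proof is correct and essentially identical to the one in the paper.
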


\bp
The key observation is that (P$_{\infty,1}$) holds. Indeed, since $\Gamma$ is connected, for any $x \in B(x_0, r)$, there exists a path of length at most $2r$ connecting $x$ to $\Gamma \setminus B(x_0, r)$. Therefore, for any $f \in L^\infty(\Gamma)$ supported in $B(x_0,r)$ and $x \in B(x_0, r)$:
\[
|f(x)| \leq 2r \sup_{y \sim z} |f(y) - f(z)| \leq 2r \sqrt{\frac{2}{\epsilon}} \sup_{y \in \Gamma} |\nabla f(y)|,
\]
where $\epsilon$ is from \eqref{defLBE}. Hence, $\|f\|_\infty \leq C r \|\nabla f\|_\infty$.

Now, suppose \eqref{RRpgg} holds for some $p \in (1,2)$ and $\gamma < \frac{1}{\beta} + \frac{1}{p} \left(1 - \frac{2}{\beta}\right)$. Then, by Remark \ref{RmkRR=>P}, (P$_{p,\gamma\beta}$) holds. Interpolating this and (P$_{\infty,1}$), using Theorem \ref{ThInterpolation}, we obtain (P$_{2,\delta}$) for
\[
\delta := \frac{p\gamma}{2} + \frac{1}{\beta} - \frac{p}{2\beta} < \frac{1}{2},
\]
which contradicts Lemma \ref{lemPp=2}. The result follows.
\ep

\section{Proof of Theorem \ref{MainThRM}} \label{Sproofm}

In this section, we revisit the results established in Section \ref{Sproofg} and explain how they can be adapted to the setting of Riemannian manifolds. Throughout this section, $(\Gamma,m)$ denotes a smooth, complete, connected, non-compact Riemannian manifold equipped with the geodesic distance, the Riemannian measure, the Laplace–Beltrami operator, and the Riemannian gradient (see Subsection \ref{Ssmanifold} for details).

\subsection{First Results}

\begin{lemma} \label{lemRp1m} 
Let $(\Gamma,m)$ be a Riemannian manifold satisfying the local gradient estimate on the heat kernel \eqref{LG} and the doubling condition \eqref{Dm}. Then the inequality (R$_{p,0}$) holds for all \( p \in [1,\infty] \); more precisely, for all \( f \in L^p(\Gamma) \) and \( x \in \Gamma \), we have
\begin{equation} \label{Rp0m} 
|\nabla e^{-\Delta} f(x)| \leq \sum_{j \in \mathbb{N}} 2^{jD} e^{-c4^j} \fint_{B(x,2^j)} |f| \, dm,
\end{equation}
where \( D \) is the ``dimension'' from \eqref{defDm}. By duality, the bound (RR$_{p,1}$) also holds for all \( p \in [1,\infty] \).
\end{lemma}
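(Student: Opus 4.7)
The plan is to derive the pointwise estimate \eqref{Rp0m} directly from the local gradient bound \eqref{LG} and the doubling property \eqref{Dm}, then convert it into an $L^p$ bound for every $p \in [1,\infty]$ by a Schur-type argument on spherical averages, and finally obtain (RR$_{p,1}$) via Theorem~\ref{ThDuality}.

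First, I would write $\nabla e^{-\Delta}f(x) = \int_\Gamma \nabla_x h_1(x,y)\, f(y)\, dm(y)$ and partition $\Gamma$ into the dyadic shells $A_0 := B(x,1)$ and $A_j := B(x,2^j) \setminus B(x,2^{j-1})$ for $j \geq 1$. On $A_j$ with $j\geq 1$, the estimate \eqref{LG} combined with $d(x,y)\geq 2^{j-1}$ gives $|\nabla_x h_1(x,y)| \leq C V(x,1)^{-1} e^{-c\,4^j}$ (absorbing the factor $1/4$ into $c$), while the analogous bound holds trivially on $A_0$. Estimating $\int_{A_j} |f|\,dm \leq V(x,2^j)\,\fint_{B(x,2^j)} |f|\,dm$ and invoking \eqref{defDm} in the form $V(x,2^j) \lesssim 2^{jD} V(x,1)$ then yields
\[
|\nabla e^{-\Delta}f(x)| \;\lesssim\; \sum_{j\in\mathbb{N}} 2^{jD} e^{-c\,4^j} \fint_{B(x,2^j)} |f|\,dm,
\]
which is precisely \eqref{Rp0m}.

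To deduce from this pointwise bound an $L^p$-estimate uniform over $p \in [1,\infty]$, I would show that the averaging operator $\mathcal{A}_r f(x) := \fint_{B(x,r)} |f|\,dm$ is bounded on every $L^p$ with norm independent of $r$. Its kernel $K_r(x,y) = V(x,r)^{-1}\mathbf{1}_{B(x,r)}(y)$ obviously integrates to $1$ in $y$; and since $y \in B(x,r)$ implies $B(y,r) \subset B(x,2r)$, the doubling condition \eqref{Dm} gives $V(x,r) \gtrsim V(y,r)$, whence $\int_\Gamma K_r(x,y)\,dm(x) \lesssim 1$. Schur's test then yields $\|\mathcal{A}_r\|_{L^p \to L^p} \lesssim 1$ uniformly in $p$ and $r$. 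Applying this termwise in \eqref{Rp0m} and exploiting the super-exponential decay of $2^{jD} e^{-c\,4^j}$ produces
\[
\|\nabla e^{-\Delta} f\|_p \;\lesssim\; \Bigl(\sum_{j\in\mathbb{N}} 2^{jD} e^{-c\,4^j}\Bigr) \|f\|_p \;\lesssim\; \|f\|_p,
\]
which is (R$_{p,0}$). The companion bound (RR$_{p,1}$) then follows by Theorem~\ref{ThDuality} applied with $\gamma = 0$.

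The only step that genuinely uses the geometry beyond the heat-kernel estimate is the Schur-type bound on $\mathcal{A}_r$, where doubling is needed to interchange the roles of $x$ and $y$ in $V(\cdot, r)$; everything else reduces to \eqref{LG} together with an absolutely convergent geometric-type sum.
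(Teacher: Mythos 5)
Your proposal is correct and follows essentially the same route as the paper: the same dyadic-annulus decomposition with \eqref{LG} and \eqref{defDm} gives the pointwise bound \eqref{Rp0m}, and your Schur-test argument for the averaging operators is just a repackaging of the paper's Jensen--Fubini computation (both hinge on $\int_{B(y,r)} V(x,r)^{-1}\,dm(x) \lesssim 1$ from doubling), with the conclusion again following from the decay of $2^{jD}e^{-c4^j}$ and duality.
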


\begin{proof}
The lemma follows from \eqref{LG}. For \( j \in \mathbb{N} \), let \( C_0(x,1) := B(x,1) \) and \( C_j(x,1) := B(x,2^j) \setminus B(x,2^{j-1}) \) for \( j \geq 1 \). Then,
\begin{align*}
|\nabla e^{-\Delta} f(x)| 
&\leq \int_{\Gamma} |\nabla_x h_{1}(x,y)| |f(y)| \, dm(y) \\
&\leq \sum_{j \in \mathbb{N}} \int_{C_j(x,1)} |\nabla_x h_{1}(x,y)| |f(y)| \, dm(y) \\
&\leq \sum_{j \in \mathbb{N}} \sup_{y \in C_j(x,1)} |\nabla_x h_{1}(x,y)| \int_{B(x,2^j)} |f| \, dm \\
&\lesssim \sum_{j \in \mathbb{N}} \frac{1}{V(x,1)} e^{-c4^j} \int_{B(x,2^j)} |f| \, dm.
\end{align*}
By the doubling property \eqref{Dm}, or more precisely \eqref{defDm}, we have
\[
\frac{1}{V(x,1)} \lesssim 2^{jD} \frac{1}{V(x,2^j)},
\]
so that
\[
|\nabla e^{-\Delta} f(x)| \lesssim \sum_{j \in \mathbb{N}} 2^{jD} e^{-c4^j} \fint_{B(x,2^j)} |f| \, dm.
\]

After taking the \( L^p \)-norm, we use Minkowski's inequality, Hölder's inequality, and Fubini’s theorem to get:
\begin{align*}
\|\nabla e^{-\Delta} f\|_p 
&\lesssim \sum_{j \in \mathbb{N}} 2^{jD} e^{-c4^j} \left( \int_\Gamma \left| \fint_{B(x,2^j)} |f(y)| \, dm(y) \right|^p dm(x) \right)^{1/p} \\
&\leq \sum_{j \in \mathbb{N}} 2^{jD} e^{-c4^j} \left( \int_\Gamma \fint_{B(x,2^j)} |f(y)|^p \, dm(y) \, dm(x) \right)^{1/p} \\
&= \sum_{j \in \mathbb{N}} 2^{jD} e^{-c4^j} \left( \int_\Gamma |f(y)|^p \int_{B(y,2^j)} \frac{1}{V(x,2^j)} \, dm(x) \, dm(y) \right)^{1/p}.
\end{align*}
By the doubling property again,
\[
\int_{B(y,2^j)} \frac{1}{V(x,2^j)} \, dm(x) \lesssim 1,
\]
so
\[
\|\nabla e^{-\Delta} f\|_p \lesssim \sum_{j \in \mathbb{N}} 2^{jD} e^{-c4^j} \|f\|_p \lesssim \|f\|_p.
\]
The case \( p = \infty \) follows similarly, with an even simpler proof. This completes the proof.
\end{proof}

\begin{corollary} \label{Corg>1/p,1/2m}
Let $(\Gamma,m)$ be a Riemannian manifold satisfying \eqref{LG}, \eqref{Dm}, and \eqref{UEm}. Then the estimate \eqref{Rpgm} holds for all \( p \in (1,\infty) \) and \( 0 \leq \gamma \leq \min\{\frac{1}{2}, \frac{1}{p}\} \), and the estimate \eqref{RRpgm} holds for all \( p \in (1,\infty) \) and \( \max\{\frac{1}{2}, \frac{1}{p}\} \leq \gamma \leq 1 \).
\end{corollary}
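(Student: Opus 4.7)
The plan is to mirror the graph argument in Corollaries \ref{Corg>1/p} and \ref{Corg>1/2}: collect a few endpoint estimates, feed them into the interpolation machine of Theorem \ref{ThInterpolation}, and finish with the duality statement of Theorem \ref{ThDuality}. Three endpoints are enough. First, Lemma \ref{lemRp1m} directly provides (R$_{p,0}$), i.e.\ $\|\nabla e^{-\Delta} f\|_p \lesssim \|f\|_p$, for every $p \in [1,\infty]$. Second, (R$_{2,1/2}$) is essentially free: since $e^{-\Delta}$ commutes with $\Delta^{1/2}$ and is a contraction on $L^2$, the isometry \eqref{E2m} gives
\[
\|\nabla e^{-\Delta} f\|_2 = \|\Delta^{1/2} e^{-\Delta} f\|_2 = \|e^{-\Delta} \Delta^{1/2} f\|_2 \leq \|\Delta^{1/2} f\|_2.
\]
Third, for $p \in (1,2]$ I would invoke the Riemannian Riesz transform bound of \cite{CCFR15}, which yields $\|\nabla f\|_p \lesssim \|\Delta^{1/2} f\|_p$ under \eqref{Dm} and \eqref{UEm}. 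Applied to $e^{-\Delta} f$ in place of $f$, this immediately upgrades to (R$_{p,1/2}$) after using the commutation $\Delta^{1/2} e^{-\Delta} = e^{-\Delta} \Delta^{1/2}$ together with the $L^p$-contractivity of the heat semigroup.

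Next I would assemble the full region in two pieces via Theorem \ref{ThInterpolation}. For $p \in (1,2]$, where $\min\{1/2,1/p\} = 1/2$, interpolating the two endpoints (R$_{p,0}$) and (R$_{p,1/2}$) at fixed $p$ produces (R$_{p,\gamma}$) for every $\gamma \in [0,1/2]$. For $p \in [2,\infty)$, where $\min\{1/2,1/p\} = 1/p$, I would instead interpolate (R$_{\infty,0}$) against (R$_{2,1/2}$): with $\theta = 2/r$ this yields $\tfrac{1}{r} = \tfrac{\theta}{2}$ and $\epsilon = \theta \cdot \tfrac{1}{2} = \tfrac{1}{r}$, so (R$_{r,1/r}$) holds for every $r \in (2,\infty)$, and a second interpolation against (R$_{r,0}$) fills the whole segment $\gamma \in [0,1/r]$. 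Together with the already established $r=2$ endpoint, the two cases cover precisely the region $\{(p,\gamma):\ p\in(1,\infty),\ \gamma \leq \min\{1/2,1/p\}\}$ claimed for \eqref{Rpgm}.

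The reverse Riesz half then falls out by Theorem \ref{ThDuality}. The map $(p,\gamma)\mapsto(p',1-\gamma)$ sends the region above onto $\{(q,\delta):\ q\in(1,\infty),\ \delta \geq \max\{1/2,1/q\}\}$, which is exactly the range of \eqref{RRpgm} asserted in the corollary.

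The only point that requires genuine care, rather than bookkeeping, is step three: one must confirm that the \cite{CCFR15} Riesz bound, stated in its unregularized form, actually implies the $e^{-\Delta}$-regularized version (R$_{p,1/2}$) used throughout this paper. As sketched, this reduces to the commutation of $e^{-\Delta}$ with fractional powers of $\Delta$ on $C_0^\infty(\Gamma)$ and to its $L^p$-contractivity, so no new analytic input is required beyond the three endpoint inequalities, and the proof proper is a short combination of Theorems \ref{ThInterpolation} and \ref{ThDuality}.
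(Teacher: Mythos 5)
Your proposal is correct and follows essentially the same route as the paper: the endpoint (R$_{p,0}$) from Lemma \ref{lemRp1m}, the (R$_{p,1/2}$) bound for $p\in(1,2]$ obtained from \cite{CCFR15} via commutation of $e^{-\Delta}$ with $\Delta^{1/2}$ and $L^p$-contractivity, interpolation through Theorem \ref{ThInterpolation}, and duality via Theorem \ref{ThDuality} for the reverse inequality. The only difference is that you spell out the interpolation bookkeeping (splitting $p\le 2$ and $p\ge 2$) which the paper leaves implicit; your computations there are accurate.
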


\begin{proof}
Lemma \ref{lemRp1m} establishes that (R$_{p,0}$) holds for all \( p \in [1,\infty] \). The main result of \cite{CCFR15} asserts that \( \|\nabla f\|_p \lesssim \|\Delta^{1/2} f\|_p \) for all \( p \in (1,2] \), and thus also  \( \|\nabla e^{-\Delta} f\|_p \lesssim \|\Delta^{1/2} f\|_p \), since \( e^{-t\Delta} \) is contractive on \( L^p \). By interpolation (Theorem \ref{ThInterpolation}),
\[
\|\nabla e^{-\Delta} f\|_p \lesssim C_p \|\Delta^\gamma f\|_p
\]
for all \( p \in (1,\infty) \) and \( 0 \leq \gamma \leq \min\{\frac{1}{2}, \frac{1}{p}\} \).

The statement for \eqref{RRpgm} follows by duality (Theorem \ref{ThDuality}).
\end{proof}

\begin{lemma} \label{lemp=2m}
Let $(\Gamma,m)$ be a Riemannian manifold satisfying \eqref{Dm}. Then (RR$_{2,\gamma}$) holds if and only if \( \gamma \geq \frac{1}{2} \).
\end{lemma}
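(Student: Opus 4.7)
The plan is to mirror the graph proof in Lemma~\ref{lemp=2} nearly verbatim, with two adjustments dictated by the Riemannian setting: the target inequality \eqref{RRpgm} carries the smoothing factor $e^{-\Delta}$, so on the spectral side we must test against the multiplier $\lambda \mapsto \lambda^\gamma e^{-\lambda}$ rather than $\lambda^\gamma$; and the approximate null-eigenfunctions of $\Delta$ must be constructed in the continuous setting, where a Lipschitz tent function will play the role of the piecewise-linear cutoff used on graphs.

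For the sufficiency direction ($\gamma \geq 1/2$ implies \eqref{RRpgm}), the functional calculus combined with \eqref{E2m} suffices. Since the multiplier $\lambda \mapsto \lambda^{\gamma - 1/2} e^{-\lambda}$ is bounded on $[0,\infty)$ as soon as $\gamma \geq 1/2$, one obtains
\[
\|\Delta^\gamma e^{-\Delta} f\|_2 = \|(\Delta^{\gamma-1/2} e^{-\Delta}) \Delta^{1/2} f\|_2 \leq C \|\Delta^{1/2} f\|_2 = C \|\nabla f\|_2.
\]

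For the necessity direction, fix $x_0 \in \Gamma$ and set $f_r(x) := \max\{0, 1 - d(x_0, x)/r\}$: this is a $1/r$-Lipschitz tent function supported in $B(x_0, r)$ with $f_r \geq 1/2$ on $B(x_0, r/2)$. Combining the pointwise bound $|\nabla f_r| \leq 1/r$ a.e.\ with \eqref{Dm} gives $\|\nabla f_r\|_2^2 / \|f_r\|_2^2 \lesssim 1/r^2$, hence by \eqref{E2m} (extended from $C_0^\infty$ to $H^1$ by density) $\|\Delta^{1/2} f_r\|_2 \lesssim \|f_r\|_2/r$. I would then project onto a small spectral interval by setting $g_r := E_{[0,\epsilon_r]}(\Delta) f_r$ for some $\epsilon_r$ of order $1/r^2$; Chebyshev applied to the spectral measure of $f_r$ keeps $\|g_r\|_2 \gtrsim \|f_r\|_2 > 0$. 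Since $g_r$ is spectrally supported in $[0,\epsilon_r]$ and $\gamma < 1/2$ yields $\lambda^{2\gamma} \geq \epsilon_r^{2\gamma-1}\lambda$ on that interval, the key spectral estimate reads
\[
\|\Delta^\gamma e^{-\Delta} g_r\|_2^2 \geq e^{-2\epsilon_r}\, \epsilon_r^{2\gamma - 1}\, \|\nabla g_r\|_2^2.
\]
If \eqref{RRpgm} held at exponent $\gamma$, dividing through by $\|\nabla g_r\|_2^2$ would force $e^{-2\epsilon_r} \epsilon_r^{2\gamma-1}$ to remain uniformly bounded in $r$, contradicting $\epsilon_r \to 0$ together with $2\gamma - 1 < 0$.

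The principal obstacle is justifying the final division: one must know that $\|\nabla g_r\|_2 > 0$. A function in $L^2(\Gamma)$ with vanishing gradient is locally, hence globally, constant on the connected manifold $\Gamma$; since \eqref{Dm} together with connectedness and non-compactness forces infinite volume via the reverse-doubling estimate \eqref{defdeltam}, any such constant must be zero. As $\|g_r\|_2 > 0$ by construction, this gives $\|\nabla g_r\|_2 > 0$ and closes the argument.
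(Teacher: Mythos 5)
Your proof is correct and follows essentially the same route as the paper: the same tent functions $f_r$ show that $0$ lies in the $L^2$-spectrum of $\Delta$, and the same spectral-calculus comparison of $\lambda^{2\gamma}$ with $\lambda$ near $0$ yields the failure for $\gamma<\tfrac12$ (the paper handles sufficiency by interpolation plus duality rather than your direct bound on the multiplier $\lambda^{\gamma-1/2}e^{-\lambda}$, but at $p=2$ these are interchangeable). If anything, your explicit Chebyshev truncation $g_r:=E_{[0,\epsilon_r]}(\Delta)f_r$ is more careful than the paper's assertion that $f_r$ is itself spectrally supported in $[0,C/r]$, which is not literally true and is exactly the point your projection repairs.
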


\begin{proof}
By \eqref{E2m} and analyticity of the heat semigroup,
\[
\|\nabla e^{-\Delta} f\|_2 = \|\Delta^{1/2} e^{-\Delta} f\|_2 \leq C \|f\|_2,
\]
so (R$_{2,0}$) holds. Interpolation with \eqref{E2m} yields (R$_{2,\gamma}$) for \( 0 \leq \gamma \leq \frac{1}{2} \). Duality (Theorem \ref{ThDuality}) then gives (RR$_{2,\gamma}$) for \( \frac{1}{2} \leq \gamma \leq 1 \).

To show failure of (RR$_{2,\gamma}$) for \( \gamma < \frac{1}{2} \), we show that 0 belongs to the \( L^2 \)-spectrum of \( \Delta \). Fix \( x_0 \in \Gamma \) and define \( f_r(x) := \max\{0, 1 - \frac{d(x,x_0)}{r} \} \). Then,
\[
\|\Delta^{1/2} f_r\|_2 = \|\nabla f_r\|_2 \leq \frac{1}{r} V(x_0,r) \lesssim \frac{1}{r} V(x_0, r/2)^{1/2} \leq \frac{2}{r} \|f_r\|_2,
\]
using the doubling property. Hence 0 lies in the spectrum of \( \Delta^{1/2} \) and thus in that of \( \Delta \). The conclusion follows as in the proof of Lemma \ref{lemp=2}.
\end{proof}

\begin{corollary} \label{Corg<1/p}
Let $(\Gamma,m)$ be a Riemannian manifold satisfying \eqref{LG}, \eqref{Dm}, and \eqref{UEm}. Then the estimate \eqref{RRpgm} fails whenever \( p \in (2,\infty) \) and \( 0 \leq \gamma < \frac{1}{p} \).
\end{corollary}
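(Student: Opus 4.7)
The plan is to follow the exact template of the graph-side Corollary \ref{Corg<1/p}: a proof by contradiction that uses interpolation against an endpoint $L^1$ estimate to land inside the $L^2$ range that Lemma \ref{lemp=2m} rules out.

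First, I would secure the endpoint bound (RR$_{1,1}$) on the manifold. Lemma \ref{lemRp1m} directly yields (R$_{\infty,0}$), namely $\|\nabla e^{-\Delta}f\|_{\infty}\lesssim \|f\|_{\infty}$, and dualising this via Theorem \ref{ThDuality} produces $\|\Delta\, e^{-\Delta}f\|_{1}\lesssim \|\nabla f\|_{1}$, i.e., (RR$_{1,1}$). This plays the role that the second half of Lemma \ref{lemRp1} plays in the graph argument.

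Next, I argue by contradiction. Suppose (RR$_{p,\gamma}$) holds for some $p\in(2,\infty)$ and some $\gamma\in[0,1/p)$, and interpolate it with (RR$_{1,1}$) via Theorem \ref{ThInterpolation} at the target exponent $r=2$. Solving $\frac{1}{2}=\frac{\theta}{p}+(1-\theta)$ gives the admissible value $\theta=\frac{p}{2(p-1)}\in(1/2,1)$, and the output is (RR$_{2,\delta}$) with
\[ \delta=\theta\gamma+(1-\theta)=1-\theta(1-\gamma). \]
The hypothesis $\gamma<1/p$ forces $1-\gamma>\frac{p-1}{p}$, hence $\theta(1-\gamma)>\theta\cdot\frac{p-1}{p}=\frac{1}{2}$, so $\delta<\frac{1}{2}$. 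This contradicts Lemma \ref{lemp=2m}.

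I do not anticipate any real obstacle: the only point worth a sanity check is that the extra $e^{-\Delta}$ smoothing built into the manifold versions of \eqref{Rpgm} and \eqref{RRpgm} is handled consistently by both Theorems \ref{ThInterpolation} and \ref{ThDuality}. Since those theorems are stated uniformly for graphs and manifolds, this is automatic, and the manifold proof reduces to an essentially verbatim transcription of the graph argument.
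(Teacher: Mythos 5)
Your proposal is correct and follows essentially the same route as the paper: the paper's proof also assumes (RR$_{p,\gamma}$) for contradiction, interpolates against (RR$_{1,1}$) — which Lemma \ref{lemRp1m} already supplies by dualising (R$_{\infty,0}$), exactly as you do — and lands on (RR$_{2,\delta}$) with $\delta<\tfrac12$, contradicting Lemma \ref{lemp=2m}. Your explicit computation of $\theta=\tfrac{p}{2(p-1)}$ and $\delta=1-\theta(1-\gamma)<\tfrac12$ just spells out what the paper states tersely.
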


\begin{proof}
Assume for contradiction that (RR$_{p,\gamma}$) holds for some \( p \in (2,\infty) \) and \( \gamma < \frac{1}{p} \). Interpolating this with the bound (RR$_{1,1}$) from Lemma \ref{lemRp1m} yields (RR$_{2,\delta}$) for some \( \delta < \frac{1}{2} \), contradicting Lemma \ref{lemp=2m}.
\end{proof}

\subsection{Further results}

The following results are slightly more involved in the case of Riemannian manifolds than for graphs, due to the presence of the smoothing operator $e^{-t\Delta}$ in the definitions of \eqref{Rpgm} and \eqref{RRpgm}. On the other hand, we no longer need to assume \eqref{LG}. Since the proofs are similar to those presented in Subsection \ref{SsFG}, we omit some details and focus instead on the differences with the graph setting.

\begin{proposition} \label{prUE=>aFKm}
Let $(\Gamma,m)$ be a Riemannian manifold satisfying \eqref{Dm} and \eqref{UEm}. Then for any $p \in (1,\infty)$, any $\alpha \geq 0$, and any $\epsilon > 0$, there exists a constant $C_\epsilon = C(\alpha,p,\epsilon) > 0$ such that for any ball $B := B(x,r) \subset \Gamma$ with radius $r \geq 1$, and any function $f \in L^p(\Gamma,m)$ supported in $B$ satisfying $\|e^{-\Delta} f \|_p \geq \epsilon \|f\|_p$, we have
\begin{equation} \label{UE=>aFKm}
\|e^{-\Delta} f\|_{L^p} \leq C_\epsilon\, r^{\alpha\beta} \|\Delta^{\alpha} e^{-\Delta} f\|_{L^p}.
\end{equation}
In particular,
\[
\eqref{RRpgm} \implies \text{(P$_{p,\beta \gamma}$)}.
\]
\end{proposition}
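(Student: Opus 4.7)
The plan is to mimic the graph proof of Proposition \ref{prUE=>aFK}, replacing the telescoping identity $f = P^{k+1}f + \sum_{\ell=0}^{k}\Delta P^\ell f$ by its continuous analogue
\[
e^{-\Delta}f = e^{-(T+1)\Delta}f + \int_0^T \Delta\, e^{-(t+1)\Delta}f\,dt,
\]
which follows from the fundamental theorem of calculus applied to $t\mapsto e^{-(t+1)\Delta}f$ in $L^p$. Here $T := \lambda^\beta r^\beta$ for a large constant $\lambda$ to be chosen, and the key difference from the graph case is that we now bound $\|e^{-\Delta}f\|_p$ rather than $\|f\|_p$; the hypothesis $\|e^{-\Delta}f\|_p \geq \epsilon \|f\|_p$ is precisely what will let me convert a control on $\|f\|_p$ back into a control on $\|e^{-\Delta}f\|_p$.

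For the head term $e^{-(T+1)\Delta}f$, I would split $\Gamma$ into dyadic annuli $C_j = B(x_0, 2^{j+1}\lambda r)\setminus B(x_0, 2^j\lambda r)$ around the ball $B = B(x_0,r)$ containing $\supp f$, and integrate the sub-Gaussian bound \eqref{UEm} (second branch, since $T+1\geq 2$) exactly as in the graph argument, invoking the doubling \eqref{defDm}, the polynomial lower bound \eqref{defdeltam}, and H\"older's inequality $\|f\|_1 \leq m(B)^{1-1/p}\|f\|_p$. This produces
\[
\|e^{-(T+1)\Delta}f\|_p \leq \delta(\lambda)\|f\|_p \leq \frac{\delta(\lambda)}{\epsilon}\|e^{-\Delta}f\|_p,
\]
with $\delta(\lambda)\to 0$ as $\lambda\to\infty$. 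For the integral term, assuming first $\alpha\in(0,1)$, I would use the commutation $\Delta e^{-(t+1)\Delta}f = \Delta^{1-\alpha}e^{-t\Delta}\bigl[\Delta^\alpha e^{-\Delta}f\bigr]$ together with the analyticity estimate \eqref{analyt} to bound the integrand in $L^p$ by $C\,t^{\alpha-1}\|\Delta^\alpha e^{-\Delta}f\|_p$, which is integrable near $0$ since $\alpha>0$, and Minkowski yields $C\,T^\alpha\|\Delta^\alpha e^{-\Delta}f\|_p\lesssim r^{\alpha\beta}\|\Delta^\alpha e^{-\Delta}f\|_p$. Combining the two estimates and choosing $\lambda$ so large that $\delta(\lambda)/\epsilon\leq 1/2$ lets me absorb $\|e^{-\Delta}f\|_p$ on the left-hand side, which proves \eqref{UE=>aFKm} for $\alpha\in(0,1)$. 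The case $\alpha=0$ is trivial by $L^p$-contractivity of $e^{-\Delta}$, and for $\alpha\geq 1$ I would iterate the semigroup identity $\lceil\alpha\rceil$ times, exactly as in Steps 4--5 of the proof of Proposition \ref{prUE=>aFK}.

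The final implication \eqref{RRpgm}$\Rightarrow$(P$_{p,\beta\gamma}$) is then immediate: applying \eqref{UE=>aFKm} with $\alpha=\gamma$ gives $\|e^{-\Delta}f\|_p\leq C_\epsilon r^{\beta\gamma}\|\Delta^\gamma e^{-\Delta}f\|_p$, and composing with the assumed reverse Riesz estimate yields precisely \eqref{Ppam}. The main technical subtlety I foresee is ensuring that the decay $\delta(\lambda)$ in the head-term bound depends only on $\lambda$ and not on $r$: as in the graph proof, this relies on the polynomial volume lower bound \eqref{defdeltam} dominating the sub-Gaussian exponential decay, and is the reason the statement requires $r\geq 1$, so that one is genuinely in the sub-Gaussian regime of \eqref{UEm}.
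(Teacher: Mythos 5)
Your proposal is correct and follows essentially the same route as the paper's proof: the same continuous reproducing formula $e^{-\Delta}f = e^{-(u+1)\Delta}f + \int_0^u \Delta e^{-(s+1)\Delta}f\,ds$, the same annular decomposition with \eqref{UEm}, doubling and the volume lower bound to make the head term small after using $\|f\|_1 \leq \epsilon^{-1}m(B)^{1-1/p}\|e^{-\Delta}f\|_p$, and the same use of analyticity \eqref{analyt} on $\Delta^{1-\alpha}e^{-s\Delta}[\Delta^\alpha e^{-\Delta}f]$ for the tail, with iteration for $\alpha\geq 1$. No gaps.
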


\begin{proof}
We follow the strategy of Proposition \ref{prUE=>aFK}, based on an appropriate reproducing formula. Let $B = B(x,r)$ with $r \geq 1$, and let $f$ be as in the proposition. Set $u = \lambda^\beta r^\beta$.

\medskip
\noindent {\bf Step 1.} We claim that
\begin{equation} \label{claim1m}
\|e^{-(u+1)\Delta} f\|_p \leq \frac{1}{2} \|e^{-\Delta} f\|_p,
\end{equation}
for some sufficiently large $\lambda > 0$, independent of $f$ and $B$.
Indeed,
\begin{align} \label{UE->FK2m}
\|e^{-(u+1)\Delta} f\|_p &= \left( \int_{ \Gamma } \left| \int_{ \Gamma} h_{u+1}(x,y) f(y)\, dm(y) \right|^p dm(x) \right)^{1/p} \nonumber \\
&\leq \left( \int_{\Gamma} \sup_{y\in B} |h_{u+1}(x,y)|^p\, dm(x) \right)^{1/p} \|f\|_{L^1}.
\end{align}
By Hölder's inequality and the assumption on $f$, we have
\begin{equation} \label{UE->FK1m}
\|f\|_{L^1} \leq m(B)^{1 - \frac{1}{p}} \|f\|_p \leq \epsilon^{-1} m(B)^{1 - \frac{1}{p}} \|e^{-\Delta} f\|_p.
\end{equation}
We decompose the space by setting $C_0(\lambda B) := 2\lambda B$ and $C_j(\lambda B) := 2^{j+1}\lambda B \setminus 2^j \lambda B$ for $j \geq 1$. Observe that for $x \in C_j(\lambda B)$ and $y \in B$, we have
\[
d(x,y)^\beta \gtrsim (2^{j\beta} - 1)(u+1),
\]
which implies, using \eqref{UEm}, that
\[
\sup_{y\in B} |h_{u+1}(x,y)| \lesssim \frac{1}{m(\lambda B)} e^{-c 2^{j\beta/(\beta-1)}} \quad \text{for } x \in C_j(\lambda B).
\]
Thus,
\begin{align} \label{Lpbdht}
\int_{\Gamma} \sup_{y\in B} |h_{u+1}(x,y)|^p\, dm(x) 
&\lesssim \sum_{j \geq 0} m(2^{j+1}\lambda B) \left( \frac{e^{-c 2^{j\beta/(\beta-1)}}}{m(\lambda B)} \right)^p \nonumber \\
&\lesssim \sum_{j \geq 0} 2^{(j+1)D} m(\lambda B) \left( \frac{e^{-c 2^{j\beta/(\beta-1)}}}{m(\lambda B)} \right)^p \nonumber \\
&\lesssim \frac{1}{m(\lambda B)^{p-1}} \lesssim \frac{\lambda^{-(p-1)\delta}}{m(B)^{p-1}}.
\end{align}
By choosing $\lambda$ sufficiently large (depending on $\epsilon$), we ensure that
\[
\left( \int_{\Gamma} \sup_{y\in B} |h_{u+1}(x,y)|^p\, dm(x) \right)^{1/p} \leq \frac{1}{2} m(B)^{\frac{1}{p} - 1} \frac{\|e^{-\Delta} f\|_p m(B)^{1 - \frac{1}{p}}}{\|f\|_{L^1}} \leq \frac{1}{2} \frac{\|e^{-\Delta} f\|_p}{\|f\|_{L^1}},
\]
which, reinjected into \eqref{UE->FK2m}, gives \eqref{claim1m}.

\medskip
\noindent {\bf Step 2.} For $\alpha \in (0,1)$, we use the identity
\[
e^{-\Delta} f = e^{-(u+1)\Delta} f + \int_0^u \Delta e^{-(s+1)\Delta} f\, ds = e^{-(u+1)\Delta} f + \int_0^u \Delta^{1 - \alpha} e^{-s\Delta} \Delta^{\alpha} e^{-\Delta} f\, ds.
\]
Taking the $L^p$ norm gives
\[
\|e^{-\Delta} f\|_p \leq \|e^{-(u+1)\Delta} f\|_p + C \|\Delta^{\alpha} e^{-\Delta} f\|_p \int_0^u s^{\alpha - 1} ds \leq \frac{1}{2} \|e^{-\Delta} f\|_p + C r^{\alpha \beta} \|\Delta^{\alpha} e^{-\Delta} f\|_p,
\]
using analyticity \eqref{analyt} and $u \simeq r^\beta$. Rearranging gives the desired estimate \eqref{UE=>aFKm}.

\medskip
\noindent {\bf Step 3.} For $\alpha > 1$, choose an integer $n > \alpha$ and iterate the formula as in Step 4 of Proposition \ref{prUE=>aFK} to obtain
\[
e^{-\Delta} f = T(e^{-(u+1)\Delta} f) + \int_0^u \cdots \int_0^u \Delta^{n - \alpha} e^{-(s_1 + \cdots + s_n)\Delta} \Delta^{\alpha} e^{-\Delta} f\, ds_n \cdots ds_1,
\]
where $T$ is linear and bounded on all $L^p$ spaces. As in Step 1, we can choose $\lambda$ so that $\|T(e^{-(u+1)\Delta} f)\|_p \leq \frac{1}{2} \|e^{-\Delta} f\|_p$. Observe that 
\[
\int_0^u \cdots \int_0^u \frac{1}{(s_1 + \cdots + s_n)^{n - \alpha}}\, ds_n \cdots ds_1 \lesssim u^{\alpha} \simeq r^{\alpha \beta}
\]
so by using the analyticity of $e^{-\Delta}$ given in \eqref{analyt}, we have
\begin{multline*}
\left\| \int_0^u \cdots \int_0^u \Delta^{n - \alpha} e^{-(s_1 + \cdots + s_n)\Delta} \Delta^{\alpha} e^{-\Delta} f\, ds_n \cdots ds_1 \right\|_p \\ \lesssim \|\Delta^{\alpha} e^{-\Delta} f\|_p \int_0^u \cdots \int_0^u \frac{1}{(s_1 + \cdots + s_n)^{n - \alpha}}\, ds_n \cdots ds_1 \lesssim  r^{\alpha \beta} \|\Delta^{\alpha} e^{-\Delta} f\|_p.
\end{multline*}
We conclude that
\[
\|e^{-\Delta} f\|_p - \frac{1}{2} \|e^{-\Delta} f\|_p \lesssim r^{\alpha \beta} \|\Delta^{\alpha} e^{-\Delta} f\|_p.
\]
The proposition follows.
\end{proof}

\begin{lemma} \label{lemPp=2m}
Let $(\Gamma,m)$ be a Riemannian manifold satisfying \eqref{Dm} and \eqref{UEm}. Then the inequality \emph{(P$_{2,\gamma\beta}$)} fails whenever $\gamma < \frac{1}{2}$.
\end{lemma}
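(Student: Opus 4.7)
The plan is to mirror the argument of Lemma~\ref{lemPp=2}, replacing the discrete kernel $p_k(\cdot,y)$ by the heat kernel $h_s(\cdot,y)$ and handling the resulting loss of compact support by a smooth cutoff. The outcome will be a family $f_s \in C_0^\infty$ whose $L^2$ norm, gradient norm, semigroup norm and support radius together contradict (P$_{2,\gamma\beta}$) when $\gamma < \frac{1}{2}$.

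I would begin by transcribing Steps~1 and 2 of Lemma~\ref{lemPp=2}: for some $\Lambda > 0$ sufficiently large (independent of $y$ and $s \geq 1$), one has $\int_{\Gamma \setminus B(y,\Lambda s^{1/\beta})} h_s(x,y)\,dm(x) \leq \frac{1}{2}$ from \eqref{UEm} and \eqref{defDm}, and consequently the set $S_s := \{x \in B(y,\Lambda s^{1/\beta}) : h_s(x,y) \geq 1/(4V(y,\Lambda s^{1/\beta}))\}$ satisfies $m(S_s) \gtrsim V(y,s^{1/\beta})$. The semigroup and symmetry properties give $\|h_s(\cdot,y)\|_2^2 = h_{2s}(y,y)$ and $\|\nabla h_s(\cdot,y)\|_2^2 = -\frac{1}{2}\partial_s h_{2s}(y,y)$; combining these identities with the on-diagonal lower bound $h_{2s}(y,y) \gtrsim 1/V(y,s^{1/\beta})$ (a direct consequence of Step~2) and with \eqref{dtUE} yields
\[
\|h_s(\cdot,y)\|_2^2 \approx \frac{1}{V(y,s^{1/\beta})}, \qquad \|\nabla h_s(\cdot,y)\|_2^2 \lesssim \frac{1}{s\,V(y,s^{1/\beta})}.
\]

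To produce a compactly supported test function, I pick a large constant $A > 0$ and set $r_s := A\, s^{1/\beta}(\log s)^{(\beta-1)/\beta}$, so that $(r_s^\beta/s)^{1/(\beta-1)} = A^{\beta/(\beta-1)}\log s$ and hence the exponential in \eqref{UEm} evaluated at distance $r_s$ and time $s$ is of order $s^{-cA^{\beta/(\beta-1)}}$, which can be made polynomially small of arbitrary order. Let $\chi_s \in C_0^\infty(B(y,2r_s))$ equal $1$ on $B(y,r_s)$ with $|\nabla \chi_s| \leq C/r_s$, and set $f_s := \chi_s\, h_s(\cdot,y) \in C_0^\infty(B(y,2r_s))$. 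An annular decomposition together with the above exponential estimate shows that both the tail $\|(1-\chi_s) h_s(\cdot,y)\|_2^2$ and the commutator $\|h_s \nabla \chi_s\|_2^2$ are bounded by $s^{-N}/V(y,s^{1/\beta})$ for any prescribed $N$ (taking $A$ correspondingly large), which is negligible compared with $\|h_s(\cdot,y)\|_2^2$ and $s^{-1}\|h_s(\cdot,y)\|_2^2$ respectively. Therefore $\|f_s\|_2 \approx \|h_s(\cdot,y)\|_2$, $\|\nabla f_s\|_2^2 \lesssim s^{-1}\|f_s\|_2^2$, and $\|e^{-\Delta} f_s\|_2 \approx \|h_{s+1}(\cdot,y)\|_2 \approx \|f_s\|_2$, so $\|e^{-\Delta} f_s\|_2 \geq \epsilon \|f_s\|_2$ for any fixed $\epsilon \in (0,1)$ once $s$ is large. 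Finally, the short-time Gaussian bound in \eqref{UEm} localizes $e^{-\Delta} f_s$ at scale of order $1$, so enlarging the ball slightly to $B' := B(y, 3r_s)$ preserves $\|e^{-\Delta} f_s\|_{L^2(B')} \approx \|e^{-\Delta} f_s\|_{L^2(\Gamma)}$.

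Applying (P$_{2,\gamma\beta}$) to $f_s \in C_0^\infty(B')$ then gives
\[
\|f_s\|_2 \approx \|e^{-\Delta} f_s\|_{L^2(B')} \leq C_\epsilon (3r_s)^{\gamma\beta} \|\nabla f_s\|_{L^2(B')} \lesssim r_s^{\gamma\beta}\, s^{-1/2}\, \|f_s\|_2,
\]
whence $s^{1/2} \lesssim r_s^{\gamma\beta} \approx s^{\gamma}(\log s)^{\gamma(\beta-1)}$, which fails as $s \to \infty$ whenever $\gamma < \frac{1}{2}$. The principal technical obstacle is the truncation analysis: verifying that the commutator $h_s \nabla \chi_s$ and the $L^2$ tail of $h_s(\cdot,y)$ outside $B(y,r_s)$ remain subdominant to the leading $L^2$ and gradient norms is exactly what forces the cutoff radius $r_s$ to carry the logarithmic factor $(\log s)^{(\beta-1)/\beta}$ dictated by the sub-Gaussian heat-kernel decay.
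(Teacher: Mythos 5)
Your proof is correct and follows the same overall strategy as the paper's: Steps~1 and~2 are identical, the test function is a truncation of $h_s(\cdot,y)$ supported in a ball of radius $r_s\approx s^{1/\beta}(\log s)^{(\beta-1)/\beta}$, the key estimates are $\|f_s\|_2^2\approx\|e^{-\Delta}f_s\|_2^2\approx V(y,s^{1/\beta})^{-1}$ together with a gradient bound of order $s^{-1}\|f_s\|_2^2$, and the conclusion is the same comparison of $s^{1/2}$ against $r_s^{\gamma\beta}$. Two technical choices differ and are worth noting. First, you truncate with a smooth cutoff $\chi_s$ and pay for it with tail and commutator estimates, whereas the paper truncates ``vertically'', setting $f_t=\max\{0,\,h_t(\cdot,y)-\tfrac{1}{tV(y,t^{1/\beta})}\}$, so that the compact support in $B(y,r_t)$ falls out of \eqref{UEm} for free; your version costs a little more bookkeeping but produces a test function that is genuinely in $C_0^\infty(B)$, as the definition of \eqref{Ppam} requires, while the paper's $f_t$ is only Lipschitz. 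Second, you bound $\|\nabla f_s\|_2$ directly via the identity $\|\nabla h_s(\cdot,y)\|_2^2=-\tfrac12\partial_s h_{2s}(y,y)$ combined with \eqref{dtUE} (you should add a word justifying the integration by parts, e.g.\ that $h_s(\cdot,y)=e^{-(s/2)\Delta}h_{s/2}(\cdot,y)$ lies in the domain of $\Delta$, so \eqref{E2m} extends to it), whereas the paper instead bounds $\|\nabla e^{-\Delta}f_t\|_2^2=\int\Delta e^{-\Delta}f_t\cdot f_t\,dm$ using a pointwise bound on $\Delta e^{-\Delta}f_t$; since the right-hand side of \eqref{Ppam} as stated is $\|\nabla f\|_{L^p(B)}$ with no smoothing, your estimate is the one that literally matches the inequality being contradicted. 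Both routes rest on the same inputs --- \eqref{UEm}, \eqref{dtUE}, and doubling --- and reach the same conclusion.
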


\begin{proof}
The first two steps are identical to those in Lemma \ref{lemPp=2}, so we only recall the conclusions.

\medskip

\noindent \textbf{Step 1.} There exists $\Lambda \geq 1$ (independent of $y \in \Gamma$ and $t \geq 1$) such that
\[
\int_{\Gamma \setminus B(y, \Lambda t^{1/\beta})} h_t(x,y) \, dm(x) \leq \frac{1}{2}.
\]

\medskip

\noindent \textbf{Step 2.} There exists $\eta > 0$ (independent of $t$ and $y$) such that the set
\[
S := S_{y,t} := \left\{ x \in B(y, \Lambda t^{1/\beta}) \,, \, h_t(x,y) \geq \frac{1}{4V(y,\Lambda t^{1/\beta})} \right\}
\]
satisfies $m(S) \geq \eta V(y, t^{1/\beta})$.

\medskip

\noindent \textbf{Step 3.} Fix $y \in \Gamma$ and define
\[
f_t(x) := \max\left\{0,\, h_t(x,y) - \frac{1}{tV(y, t^{1/\beta})} \right\}.
\]
By computations similar to those in \eqref{Lpbdht}, and using \eqref{Dm} and \eqref{UEm}, we have
\[
\|f_t\|_2^2 \lesssim \frac{1}{V(y, t^{1/\beta})}.
\]
We aim to show that this is still controlled (up to constants) by $\|e^{-t\Delta} f_t\|_2^2$. By the doubling condition \eqref{D}, the ratio $\frac{8V(y, \Lambda (t+1)^{1/\beta})}{V(y, t^{1/\beta})}$ is uniformly bounded for $t \geq t_0$, some $t_0 > 0$. For such $t$, we get:
\[
h_{t+1}(x,y) \geq \frac{1}{4V(y, \Lambda (t+1)^{1/\beta})} \quad \text{for all } x \in S_{y, t+1}.
\]
Since $f_t \geq h_t(\cdot,y) - \frac{1}{tV(y, t^{1/\beta})}$ and $e^{-t\Delta}$ is positive, it follows that
\[
e^{-t\Delta} f_t(x) \geq h_{t+1}(x,y) - \frac{1}{tV(y, t^{1/\beta})} \geq \frac{1}{8V(y, \Lambda (t+1)^{1/\beta})} \quad \text{on } S_{y, t+1}.
\]
Hence,
\[
\|e^{-t\Delta} f_t\|_2^2 \gtrsim \frac{1}{V(y, t^{1/\beta})},
\]
using Step 2 and the doubling property. Therefore,
\begin{equation} \label{prft}
\frac{1}{V(y, t^{1/\beta})} \lesssim \|e^{-t\Delta} f_t\|_2 \leq \|f_t\|_2 \approx \frac{1}{V(y, t^{1/\beta})},
\end{equation}
as desired.

\medskip

\noindent \textbf{Step 4.} We claim that $\Delta e^{-\Delta}$ is bounded on $L^p$ for all $p \in [1, \infty]$. The proof is standard and omitted; it mirrors that of Lemma \ref{lemRp1m}, using \eqref{dtUE} in place of \eqref{LG}.

Decompose $f_t = h_t(\cdot, y) - g_t$ with $\|g_t\|_\infty \leq \frac{1}{tV(y, t^{1/\beta})}$. Then,
\[
|\Delta e^{-\Delta} f_t(x)| \leq |\partial_t h_{t+1}(x,y)| + |\Delta e^{-\Delta} g_t(x)| \lesssim \frac{1}{tV(y, t^{1/\beta})}
\]
by \eqref{dtUE} and the $L^\infty$-boundedness of $\Delta e^{-\delta}$.
Combining this with \eqref{prft}, we deduce
\begin{equation} \label{lbonfk}
\|\nabla e^{-\Delta} f_t\|_2^2 = \int_{\Gamma} [\Delta e^{-\Delta} f_t] f_t \, dm \lesssim \frac{1}{tV(y, t^{1/\beta})} \cdot \int_\Gamma h_t(x,y) \, dm(x) \lesssim \frac{1}{t} \|e^{-t\Delta} f_t\|_2^2.
\end{equation}

\medskip

We now find a ball $B(y, r_t)$ such that $\supp f_t \subset B(y, r_t)$. As in Lemma \ref{lemPp=2}, using \eqref{UEm}, we obtain:
\[
r_t \lesssim t^{1/\beta} \ln^{(\beta-1)/\beta}(2t).
\]
Thus, we construct a sequence of functions $f_t$ supported in $B(y, r_t)$ with $r_t \to \infty$ and
\[
\|\nabla e^{-\Delta} f_t\|_2^2 \lesssim \frac{1}{t} \|f_t\|_2^2 \lesssim \frac{\ln(2r_t)^{\beta-1}}{r_t^\beta} \|f_t\|_2^2 \lesssim \frac{1}{r_t^{2\gamma\beta + \varepsilon}} \|f_t\|_2^2,
\]
for any $\gamma < \frac{1}{2}$ and $0 < \varepsilon < (1 - 2\gamma)\beta$. Hence, the Poincaré inequality \emph{(P$_{2, \gamma\beta}$)} fails when $\gamma < \frac12$.
\end{proof}

\begin{corollary} \label{Corg<L(b,p)m}
Let $(\Gamma, m)$ be a Riemannian manifold satisfying \eqref{Dm} and \eqref{UEm}. Then \eqref{RRpgm} fails whenever \( p \in (1,2) \) and 
\[
\gamma < \frac{1}{\beta} + \frac{1}{p} \left(1 - \frac{2}{\beta}\right).
\]
\end{corollary}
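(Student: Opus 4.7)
The plan is to mirror the proof of Corollary \ref{Corg<L(b,p)} from the graph setting. The argument there combined three ingredients: (a) an unconditional Poincar\'e-type inequality (P$_{\infty,1}$); (b) the implication \eqref{RRpgg} $\Rightarrow$ (P$_{p,\beta\gamma}$) supplied by Proposition \ref{prUE=>aFK}; and (c) Lemma \ref{lemPp=2}, which forbids (P$_{2,\gamma\beta}$) for $\gamma < 1/2$. The Riemannian analogues of (b) and (c) are Proposition \ref{prUE=>aFKm} and Lemma \ref{lemPp=2m}, already at our disposal, so the only remaining piece is to establish (a) in the present setting.

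For (a) I would run a direct geodesic argument. For any $f \in C^\infty_0(B(x_0,r))$ and any $x \in B(x_0,r)$, integrating $|\nabla f|$ along a minimizing geodesic from $x$ to a point outside the support of $f$ (of length at most $2r$) gives $\|f\|_\infty \le 2r\|\nabla f\|_\infty$. Combined with $L^\infty$-contractivity of the heat semigroup, this yields $\|e^{-\Delta} f\|_{L^\infty(B)} \le 2r\|\nabla f\|_{L^\infty(B)}$, which is strictly stronger than what \eqref{Ppam} requires; in particular, the conditional hypothesis $\|e^{-t\Delta}f\|_\infty \ge \varepsilon\|f\|_\infty$ built into \eqref{Ppam} creates no obstruction here.

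The rest of the argument then proceeds by contrapositive. Assume \eqref{RRpgm} holds for some $p \in (1,2)$ and $\gamma < \frac{1}{\beta} + \frac{1}{p}\bigl(1 - \frac{2}{\beta}\bigr)$. Proposition \ref{prUE=>aFKm} delivers (P$_{p,\beta\gamma}$). Interpolating (P$_{p,\beta\gamma}$) with (P$_{\infty,1}$) via Theorem \ref{ThInterpolation} at the value $\theta = p/2 \in (0,1)$, so that $\frac{1}{2} = \frac{\theta}{p} + \frac{1-\theta}{\infty}$, produces (P$_{2,\beta\delta}$) with
\[
\delta \;=\; \frac{p}{2}\,\gamma + \Bigl(1 - \frac{p}{2}\Bigr)\frac{1}{\beta} \;<\; \frac{p}{2}\!\left[\frac{1}{\beta} + \frac{1}{p}\!\left(1 - \frac{2}{\beta}\right)\right] + \Bigl(1 - \frac{p}{2}\Bigr)\frac{1}{\beta} \;=\; \frac{1}{2},
\]
in direct contradiction with Lemma \ref{lemPp=2m}.

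The main subtlety I anticipate is justifying that Theorem \ref{ThInterpolation} genuinely applies given the conditional nature of the Riemannian Poincar\'e inequality \eqref{Ppam}. Because (P$_{\infty,1}$) holds unconditionally in the first step, only one of the two inputs to the interpolation is truly conditional, which keeps the argument on firm ground: the interpolation then reduces to a standard complex interpolation applied to sublinear operators involving $\nabla f$ and $e^{-\Delta}f$ on functions supported in $B$, exactly as in the discussion following Theorem \ref{ThInterpolation}.
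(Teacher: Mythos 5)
Your proposal is correct and follows essentially the same route as the paper: establish the unconditional estimate (P$_{\infty,1}$) from connectedness via integration along geodesics, invoke Proposition \ref{prUE=>aFKm} to convert the assumed \eqref{RRpgm} into (P$_{p,\beta\gamma}$), interpolate at $\theta = p/2$ to obtain (P$_{2,\beta\delta}$) with $\delta < \tfrac12$, and contradict Lemma \ref{lemPp=2m}. Your computation of $\delta$ agrees with the paper's, and your explicit handling of the conditional hypothesis in \eqref{Ppam} (noting that the $L^\infty$ endpoint is in fact unconditional) is a welcome clarification of a point the paper leaves implicit.
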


\begin{proof}
Since the manifold is connected, property \((P_{\infty,1})\) holds trivially. Assume, by contradiction, that \eqref{RRpgm} holds for some \( p \in (1,2) \) and
\[
\gamma < \frac{1}{\beta} + \frac{1}{p} \left(1 - \frac{2}{\beta}\right).
\]
Then, by Proposition \ref{prUE=>aFKm}, property \((P_{p,\gamma\beta})\) holds. Interpolating between \((P_{p,\gamma\beta})\) and \((P_{\infty,1})\) via Theorem \ref{ThInterpolation}, we obtain that \((P_{2,\delta})\) holds for
\[
\delta := \frac{p\gamma}{2} + \frac{1}{\beta} - \frac{p}{2\beta} < \frac{1}{2}.
\]
This contradicts Lemma \ref{lemPp=2m}, and the result follows.
\end{proof}

\begin{corollary} \label{Corg<1/bm}
Let $(\Gamma, m)$ be a Riemannian manifold satisfying \eqref{Dm} and \eqref{UEm}. Then \eqref{RRpgm} fails whenever \( p \in (1,\infty) \) and \( \gamma \in \left[0, \frac{1}{\beta}\right) \).
\end{corollary}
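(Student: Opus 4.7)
The plan is to adapt the graph proof of Corollary \ref{Corg<1/b} to the manifold setting, with one extra step dealing with the smoothing $e^{-\Delta}$ that now decorates both the Poincar\'e inequality and the failure mode of \eqref{RRpgm}. Argue by contraposition: suppose \eqref{RRpgm} holds for some $p \in (1,\infty)$ and $\gamma < 1/\beta$, and aim at a contradiction. Proposition \ref{prUE=>aFKm} then gives the Poincar\'e inequality $(P_{p,\beta\gamma})$: for every $\epsilon>0$ there exists $C_\epsilon>0$ such that
\[
\|e^{-\Delta} f\|_{L^p(B)} \le C_\epsilon\, r^{\beta\gamma}\|\nabla f\|_{L^p(B)}
\]
for every ball $B = B(x_0,r)$ with $r \ge 1$ and every $f \in C_0^\infty(B)$ satisfying $\|e^{-\Delta}f\|_p \ge \epsilon\|f\|_p$.

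For the test functions, fix $x_0 \in \Gamma$ and consider $f_r(x) := \max\{0,\, r - d(x_0,x)\}$, replaced if necessary by a $C_0^\infty$ smoothing that preserves the support, the pointwise bound $0 \le f_r \le r$, and the Lipschitz constant $1$. Then $\supp f_r \subset B(x_0,r)$, $|\nabla f_r| \le 1$ a.e., and $f_r \ge r/2$ on $B(x_0,r/2)$, so by \eqref{Dm}
\[
\|f_r\|_p \approx r\,V(x_0,r)^{1/p}, \qquad \|\nabla f_r\|_p \lesssim V(x_0,r)^{1/p}.
\]
The new ingredient, compared to the graph case, is the verification of the hypothesis $\|e^{-\Delta}f_r\|_p \ge \epsilon\|f_r\|_p$. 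For this I would re-use Step~1 of Lemma \ref{lemPp=2m}: there exists $\Lambda \ge 1$, independent of $x$, such that $\int_{B(x,\Lambda)} h_1(x,y)\,dm(y) \ge 1/2$. Hence, for $r \ge 4\Lambda$ and $x \in B(x_0,r/4)$ one has $B(x,\Lambda) \subset B(x_0,r/2)$, so
\[
e^{-\Delta}f_r(x) \ \ge\ \int_{B(x,\Lambda)} h_1(x,y)\,f_r(y)\,dm(y) \ \ge\ \frac{r}{2}\cdot\frac{1}{2} \ =\ \frac{r}{4}.
\]
Taking $L^p$-norms over $B(x_0,r/4)$ and using doubling yields $\|e^{-\Delta}f_r\|_{L^p(B(x_0,r))} \gtrsim r\,V(x_0,r)^{1/p}$, which both ensures admissibility of $f_r$ in $(P_{p,\beta\gamma})$ with some absolute $\epsilon>0$ and supplies the lower bound we need on the left-hand side. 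Plugging everything into $(P_{p,\beta\gamma})$ produces $r\,V(x_0,r)^{1/p} \lesssim r^{\beta\gamma} V(x_0,r)^{1/p}$, i.e.\ $r^{1-\beta\gamma} \lesssim 1$ for all $r$ large enough, which is impossible when $\gamma < 1/\beta$.

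The only genuinely new step over the graph argument is the non-collapse of $f_r$ under $e^{-\Delta}$; I expect no obstacle there, since positivity of the heat semigroup together with the one-time mass concentration already exploited in Lemma \ref{lemPp=2m} is all we need. The requirement $f_r \in C_0^\infty(B(x_0,r))$ is a mild inconvenience (the tent function is only Lipschitz, and the distance function has a cut locus), but a routine mollification---or convolving with $e^{-\tau\Delta}$ for small $\tau$ and cutting off---produces a $C_0^\infty$ variant with exactly the same pointwise and Sobolev estimates up to multiplicative constants.
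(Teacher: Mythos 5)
Your proposal is correct and follows essentially the same route as the paper: contraposition, Proposition \ref{prUE=>aFKm} to get $(P_{p,\beta\gamma})$, the tent functions $f_r$, and a pointwise lower bound $e^{-\Delta}f_r \gtrsim r$ on a ball of radius comparable to $r$ coming from the mass concentration of $h_1$ established in Lemma \ref{lemPp=2m} (you use Step~1 plus stochastic completeness where the paper invokes Step~2 via the set $S_{y,1}$ --- an equivalent device). Your explicit verification of the admissibility condition $\|e^{-\Delta}f_r\|_p \ge \epsilon \|f_r\|_p$ and the mollification remark are welcome details that the paper leaves implicit.
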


\begin{proof}
We argue by contraposition: assume that \eqref{RRpgm} holds, and we aim to show that \( \gamma \geq \frac{1}{\beta} \). By Proposition \ref{prUE=>aFKm}, the Poincar\'e inequality \((P_{\beta, \gamma})\) holds. That is, there exists a constant \( C > 0 \) such that for any ball \( B := B(x,r) \) with radius \( r \geq 1 \), and any smooth function \( f \) supported in \( B \), we have
\begin{equation} \label{Pbgm}
\|e^{-\Delta} f\|_{L^p(\Gamma)} \leq C r^{\beta\gamma} \|\nabla f\|_{L^p(\Gamma)}.
\end{equation}
Fix a point \( x_0 \in \Gamma \), and for \( r \geq 4 \), define
\[
f_r(x) := \max\{0, r - d(x_0, x)\}.
\]
Then:
\begin{itemize}
    \item \( |\nabla f_r| \leq \1_{B(x_0, 1)} \), hence
    \[
    \|\nabla f_r\|_{L^p} \leq V(x_0, r)^{1/p}.
    \]
    
    \item By Steps 1 and 2 of Lemma \ref{lemPp=2m}, there exist constants \( \Lambda \geq 1 \), \( \eta > 0 \) such that the set
    \[
    S_{y,1} := \left\{ x \in B(y, \Lambda)\, ,\,  h_1(x,y) \geq \frac{1}{4 V(y,\Lambda)} \right\}
    \]
    satisfies \( m(S_{y,1}) \geq \eta V(y,\Lambda) \). So for \( r \geq 4\Lambda \) and \( y \in B(x_0, r/2 - \Lambda) \), we get
    \[
    e^{-\Delta} f_r(y) \geq \int_{S_{y,1}} h_1(x,y) f_r(x) \, dm(x) \gtrsim \frac{m(S_{y,1})}{4V(y,\Lambda)} \inf_{B(y,\Lambda)} f_r(x) \gtrsim r,
    \]
 whence
    \[
    \|e^{-\Delta} f_r\|_p \gtrsim r V(x_0, r/2 - \Lambda)^{1/p} \gtrsim r V(x_0, r)^{1/p}
    \]
    by the doubling property.
\end{itemize}
Inserting these bounds into \eqref{Pbgm} gives
\[
r V(x_0, r)^{1/p} \lesssim r^{\beta\gamma} V(x_0, r)^{1/p},
\]
which simplifies to
\[
r^{1 - \beta\gamma} \lesssim 1.
\]
Since this holds for all \( r \geq 4\Lambda \), we must have \( 1 - \beta\gamma \leq 0 \), that is, \( \gamma \geq \frac{1}{\beta} \). The result follows.
\end{proof}

\bibliographystyle{amsalpha}
\bibliography{Biblio}

%\begin{thebibliography}{AAA}
%\bibitem{BP88} M. T. Barlow and E. A. Perkins, {\em Brownian motion on the Sierpinski gasket.} Probab. Theory Relat. Fields {\bf 79} (1988), No. 4, 543--623.
%\bibitem{Jon96} O. D. Jones, {\em Transition probabilities for the simple random walk on the Sierpinski graph}, Stoch. Proc. Appl. {\bf 61} (1996), pp 45-69.
%\end{thebibliography}

\end{document}